
\documentclass[final,1p,times,authoryear]{elsarticle}
\usepackage{amsmath,amssymb,amsthm,amsfonts,bbm,wrapfig,stmaryrd,url,textcomp,wasysym}
\usepackage{enumerate}
\usepackage{graphicx}
\usepackage{enumitem}
\usepackage[all]{xy}
\usepackage[colorlinks,final,hyperindex]{hyperref}

\newtheorem{theorem}{Theorem}
\newtheorem{lemma}[theorem]{Lemma}

\newtheorem{proposition}[theorem]{Proposition}

\theoremstyle{definition}
\newtheorem{definition}[theorem]{Definition}

\newtheorem{example}[theorem]{Example}
\newtheorem{remark}[theorem]{Remark}

\renewenvironment{proof}{\noindent{\em Proof.}}{$\Box$~\\}
\newtheorem*{acknowledgment}{Acknowledgment}

\newcommand{\pospart}[1]{#1^+}
\newcommand{\negpart}[1]{#1^-}
\newcommand{\pcwext}[1]{\mathcal{P}#1}
\newcommand{\distmod}[1]{\mathcal{D}#1}
\newcommand{\hdistmod}[1]{\hat{\mathcal{D}}#1}
\newcommand{\slimmod}[1]{\hat{\mathcal{D}}#1}
\newcommand{\slimext}[1]{\hat{\mathcal{P}}#1}
\newcommand{\bivmod}[1]{\mathcal{D}_2#1}

\def\reg{\textsuperscript{\textregistered}}
\def\tm{\leavevmode\hbox{$\rm {}^{TM}$}}
\newcommand{\mma}{Mathematica$\reg$}
\newcommand{\mpl}{Maple\tm}


\newcommand{\NN}{\mathbb{N}}

\newcommand{\QQ}{\mathbb{Q}}
\newcommand{\RR}{\mathbb{R}}
\newcommand{\CC}{\mathbb{C}}

\newcommand{\der}{\partial}

\newcommand{\nglb}{\text{\o}}

\renewcommand{\ker}{\operatorname{Ker}}
\newcommand{\im}{\operatorname{Im}}

\def\<{\langle}
\def\>{\rangle}

\newcommand{\trans}[1]{
  \mathrel{\vbox{\offinterlineskip\ialign{%
    \hfil##\hfil\cr
    $\scriptscriptstyle*$\cr
    \noalign{\kern0.3ex}
    #1\cr
}}}}

\newcommand{\cum}{{\textstyle\varint}}

\newcommand{\ev}{\evl}

\newcommand{\sgn}{\operatorname{sgn}}

\newcommand{\nonzero}[1]{#1^\times}

\newcommand{\galg}{\mathcal{F}}

\def\ddiagup{\rlap{\raisebox{-0.4mm}{$\scriptscriptstyle\diagup$}}%
  \raisebox{0.4mm}{$\scriptscriptstyle\diagup$}}
\def\vcum{{\setbox0=%
    \hbox{$\textstyle{\scriptscriptstyle\diagup}{\varint}$}%
    \textstyle{\vcenter{\hbox{$\scriptscriptstyle\diagup$}}\kern-.5\wd0}%
    \!\varint}}
\def\wcum{{\setbox0=%
    \hbox{$\textstyle{\ddiagup}{\varint}$}%
    \textstyle{\vcenter{\hbox{$\ddiagup$}}\kern-.5\wd0}%
    \!\varint}}
\newcommand{\vder}{\eth}

\newcommand{\vcumd}{\vcum_{\!\mathcal M}}
\newcommand{\vderd}{\vder_{\mathcal M}}
\newcommand{\dirm}{\mathcal{M}}

\newcommand{\bivsh}[2]{S^{#1}_{#2}}
\newcommand{\bivmsh}[2]{\smash{\acute{\text{S}}}^{#1}_{#2}}
\newcommand{\bivmev}[2]{\operatorname{\acute{e}v}^{#1}_{#2}}

\newcommand{\bspc}{\mathcal{B}}

\newcommand{\trp}[1]{#1^\intercal}

\newcommand{\evl}{\text{\scshape\texttt e}}
\newcommand{\mevl}{\acute{\text{\scshape\texttt e}}}
\newcommand{\mev}{\mevl}
\newcommand{\mshift}{\acute{\text{S}}}
\newcommand{\psevl}{\hat{\evl}}

\newcommand{\Aut}{\mathrm{Aut}_K}

\newcommand{\intdiffop}{\galg_\Phi[\der,\cum]}
\newcommand{\diffop}{\galg[\der]}
\newcommand{\rbop}{\galg[\cum]}
\newcommand{\eqintdiffop}{\galg[\der,\cum_{\!\Phi}]}

\newcommand{\bvp}[2]{\boxed{\begin{array}{l}#1\\#2\end{array}}}



\let\phi\varphi
\let\epsilon\varepsilon
\let\mathbb\mathbbm


\begin{document}

\begin{frontmatter}

\title{An Integro-Differential Structure for Dirac Distributions}

\author{Markus Rosenkranz}
\address{Johannes Kepler University Linz, Austria}
\ead{marcus@rosenkranz.or.at}
\ead[url]{http://www.risc.jku.at/home/marcus}

\author{Nitin Serwa}
\address{University of Kent, Canterbury, United Kingdom}
\ead{ns512@kent.ac.uk}
\ead[url]{https://www.kent.ac.uk/smsas/our-people/profiles/serwa\_nitin.html}

\begin{abstract}
  We develop a new algebraic setting for treating piecewise functions and distributions together
  with suitable differential and Rota-Baxter structures. Our treatment aims to provide the algebraic
  underpinning for symbolic computation systems handling such objects. In particular, we show that
  the Green's function of regular boundary problems (for linear ordinary differential equations) can
  be expressed naturally in the new setting and that it is characterized by the corresponding
  distributional differential equation known from analysis.
\end{abstract}

\begin{keyword}
  Differential algebra, Rota-Baxter algebra; distribution theory; boundary problems.
\end{keyword}
\end{frontmatter}

\section{Introduction}
\label{sec:intro}

It is indisputable that differential algebra~\cite{Ritt1966,Kolchin1973}, differential Galois
theory~\cite{PutSinger2003} as well as various other approaches of Symbolic Analysis have made
outstanding contributions to the theory of differential equations~\cite{Seiler1997}. From their
particular algebraic-algorithmic vantage points, they provide powerful tools for describing and
analyzing the structure of solutions. Interestingly, the \emph{theory of distributions}---in modern
analysis the hard bedrock supporting the theory of linear (ordinary and partial) differential
equations---has received comparably little attention in Symbolic Analysis.

One reason for this is perhaps that the standard approach to distributions seems to be inherently
topological in nature; even the very definition of distributions involves the continuous dual of
certain carefully chosen function spaces~\cite{DuistermaatKolk2010}. Of course, such an objection
begs the question: Namely, how much \emph{algebraic structure} can one extract from the
Algebra-Analysis mixture at first encountered? Even differential algebra was in the same situation
before basic notions such as differential rings were introduced.

Based on the results of the present paper, we think the main obstacle to an algebraic treatment of
distributions is the widespread limitation of differential algebra to structures having only
derivations (differential rings / fields / algebras / modules). In a setting thus limited, one can
say little more than the following: ``A distribution like~$\delta_a$ has arbitrary formal
derivatives~$\delta_a', \delta_a'', \dots$''. In effect, one treats~$\delta_a$ as a differential
indeterminate. But the characteristic feature of the Dirac distribution~$\delta_a$ is of course that
it effects an evaluation at~$a$ when it ``appears under the integral''. For making this central idea
precise (in algebraic terms), one has to take recourse to the theory of \emph{Rota-Baxter
  algebras}~\cite{Guo2012}. By the same token, one must also provide an algebraic treatment of
evaluation; both of these are linked in the structure of integro-differential
algebras~\cite{RosenkranzRegensburger2008a} by a crucial relation~\eqref{eq:eval}.

Once integration enters the stage, it is clear that we should also consider \emph{piecewise
  functions} since the latter may be built up from the Heaviside function~$H_a(x) = H(x-a)$, which
is in turn the integral of the Dirac delta distribution~$\delta_a$. In fact, it is natural to start
with the theory of piecewise (smooth or continuous) functions since these may usefully be endowed
with a Rota-Baxter operator without involving distributions. One might even be tempted to build up
distributions via this route, simply adding a derivation that maps~$H_a$ to the Dirac
distribution~$\delta_a$. As we shall see subsequently (Remark~\ref{rem:no-diffring}), our actual
development must follow a slightly different route, though we shall indeed treat piecewise functions
before introducing Dirac distributions.

Assuming one can extract some ``algebraic substance'' from the theory of distributions, what does it
achieve? In particular, does it allow any symbolic computation for practically important
applications? We think the answer is yes, as we would like to demonstrate here: While the primary
purpose of this paper is to lay out the foundations, we do include an application section to sketch
one domain where our algebraic approach to distributions can be employed---\emph{boundary problems}
for linear ordinary differential equations (LODE). Here distributions come into their own: As we
shall see in detail, one may actually distinguish three different (though related) roles for the
algebraic Heaviside functions and Dirac distributions: (1) The Green's function of a regular
boundary problem is naturally a piecewise function (or even a proper distribution in the case of
ill-posed boundary problems). (2) It can be shown to satisfy a differential equation with the Dirac
distribution on its right-hand side. (3) The Green's operator may act on functions which are only
piecewise smooth.

Of course this does not exhaust the possible scope of applications. Eventually, \emph{computer
  algebra systems} like~\mpl\ and~\mma\ should be able to treat distributions and piecewise
functions much like any other ``functional'' terms. They should provide support for all crucial
operations on these objects, including many that we cannot address here (e.g.\@ convolution,
Fourier/Laplace transforms, composition). For practical applications, one often needs to be able to
use piecewise functions and distributions at suitable places in algebraic and differential equations
to be solved or simplified. We hope our approach will provide a convenient starting point for
further development in this direction.

\smallskip

\textbf{Structure of the paper.} In detail, we will develop the subject matter as follows. After
completing this Introduction by explaining some crucial notation, we briefly review the theory of
\emph{differential Rota-Baxter algebras and modules}, which form the basic algebraic framework for
the rest of the paper (Section~\ref{sec:intdiffalg}). In the next section we build up the algebra of
\emph{piecewise functions} and show that is a Rota-Baxter extension of the ground algebra
(Proposition~\ref{prop:pcw}), generalizing the familiar setting of piecewise smooth or piecewise
analytic functions (Examples~\ref{ex:pcw-smooth} and~\ref{ex:pcw-anl}). Then \emph{distributions}
are introduced as a differential Rota-Baxter module (Section~\ref{sec:distmod}), following an
independent route but such that the piecewise functions reappear as a Rota-Baxter subalgebra
(Theorem~\ref{thm:dist-drbm}). In fact, we shall see that the distributions even form an
integro-differential module (Proposition~\ref{prop:intdiff-mod}), that they can be characterized by
a natural universal property (Proposition~\ref{prop:dirac-module}), and that they inherit the shift
structure from the ground algebra (Theorem~\ref{thm:intdiff-mod-shifted}). We end the section by
exhibiting the filtration structure of the integro-differential module of distributions. The topic
of the next section is a---rather modest---species of \emph{bivariate distributions}
(Section~\ref{sec:biv-distributions}), though large enough to cater for the applications of the next
section. Just as the univariate distribution module contains the Rota-Baxter subalgebra of piecewise
functions, the bivariate version contains the bivariate piecewise functions as a subalgebra relative
to both Rota-Baxter structures (Proposition~\ref{prop:pcw-biv}). However, the main result of this
section is that the bivariate distribution module is a differential Rota-Baxter module with respect
to both differential Rota-Baxter structures, containing isomorphic copies of both univariate
distribution modules plus ``diagonal'' distributions (Theorem~\ref{thm:biv-distmod}). Equipped with
these tools, we turn to the aforementioned applications in the theory of LODE \emph{boundary
  problems} (Section~\ref{sec:appl}). Our first goal is to generalize the algorithm extracting
Green's functions from Green's operators given in~\cite{RosenkranzSerwa2015} to bivariate
distributions over ordinary shifted integro-differential algebras
(Theorem~\ref{thm:extr-corr}). Next we show that such a Green's function also satisfies an algebraic
version of the well-known distributional differential equation with~$\delta(x-\xi)$ on the
right-hand side (Theorem~\ref{thm:dist-bvp}). Finally, we confirm that the corresponding Green's
operator of an arbitrary well-posed boundary problem may actually be applied to piecewise functions
(Proposition~\ref{prop:pcw-forcing}). We conclude with some thoughts about future developments.

\smallskip

\textbf{Notation.} With the exception of the ring of integro-differential operators (to be
introduced in Section~\ref{sec:intdiffalg}), all rings and algebras in this paper are assumed to be
\emph{commutative} and---unless stated otherwise---also \emph{unitary}. Algebras are over a ground
ring~$K$ that will usually be a field (in fact an ordered field for most of the time). The set of
nonzero elements of~$K$ is denoted by~$\nonzero{K}$. We write $\Aut(\galg)$ for the group of
$K$-algebra automorphisms of an algebra~$\galg$. By a \emph{character} of~$\galg$ we mean an algebra
homomorphism~$\galg \to K$. If~$P$ and~$Q$ are any linear operators on~$\galg$, their
\emph{commutator} is denoted by~$[P, Q] := PQ-QP$. If~$S$ is a semigroup, we write~$\galg[S]$ for
the \emph{semigroup algebra} of~$S$ over~$\galg$, by which we mean the monoid
algebra~\cite[p.~104]{Lang2002} of the unitarization~$S \uplus \{ 1\}$.

If~$(\galg, \der)$ is a differential algebra, we write as usual~$f' := \der f$
and~$f^{(k)} = \der^k f$ for the derivatives of an element~$f \in \galg$. For a set of differential
indeterminates~$X$, the algebra of differential polynomials~$\galg\{X\}$ is the free object in the
category of differential $\galg$-algebras. Similarly, the $\galg$-submodule~$\galg\{X\}_1$
consisting of \emph{affine differential polynomials}, i.e.\@ those having total degree at most~$1$,
is the free object in the category of differential $\galg$-modules.

Since in Section~\ref{sec:pcwext} we will be dealing with $K$-algebras where $(K, <)$ is an
\emph{ordered field} (hence of characteristic zero), it is useful to introduce some notation for
ordered fields. We denote the \emph{minimum} and \emph{maximum} of two elements~$a, b \in K$
by~$a \sqcap b$ and~$a \sqcup b$, respectively. We agree that~$\sqcap, \sqcup$ have precedence
over~$+,-$. Furthermore, we shall write~$\pospart{a} := a \sqcup 0$ and~$\negpart{a} := a \sqcap 0$
for the \emph{positive and negative part} of~$a \in K$; then we have~$a = \pospart{a} + \negpart{a}$
and $|a| = \pospart{a} - \negpart{a}$. We observe that both~$(K, \sqcap)$ and~$(K, \sqcup)$ are
semigroups, which we denote by~$K_\sqcap$ and~$K_\sqcup$, respectively. We define the
\emph{Heaviside operator} $H\colon K \to K$ by
\begin{equation*}
  H(a) = \begin{cases}
    0 & \text{if $a<0$,}\\
    \eta & \text{if $a=0$,}\\
    1 & \text{if $a>0$}
    \end{cases}
\end{equation*}
for~$a \in K$. The appropriate choice of~$\eta \in K$ is somewhat subtle; we will repeatedly come
back to this point. From the analytic point of view, we might think of the
mapping $\RR \to \RR, a \mapsto H(a)$ as a (representative of an) $L^2$
function\footnote{\label{fn:heav-val}These remarks are purely motivational. It is important to
  distinguish $H(a)$ from $H_a$, which we shall introduce below, in Definition~\ref{def:pcwext}, as
  the actual algebraic model of the \emph{Heaviside function}~$x \mapsto H(x-a)$.}, and then the
choice of~$\eta \in \RR$ is of course immaterial. For the algebraic treatment, however, we will
distinguish three more or less natural possibilities (the terminology is again motivated by the case
$K = \RR$):
\begin{itemize}
\item The \emph{left continuous} convention uses~$\eta=0$.
\item In contrast, the \emph{right continuous} choice is to put~$\eta=1$.
\item Finally, the \emph{symmetric} setting~$\eta=1/2$ is essentially the sign function in the sense
 that one has~$\sgn(a) = 2 H(a) - 1$. It is neither left nor right continuous.
\end{itemize}
In this paper, we use the left-continuous convention~$\eta=0$, but we will discuss the other
possibilities as we develop the corresponding algebraic structures. For convenience we
shall use also~$\bar{H}(x) := 1-H(x)$ for the \emph{dual Heaviside operator}.

\section{Differential Rota-Baxter Algebras and Modules}
\label{sec:intdiffalg}

Just as a differential algebra~$(\galg, \der)$ encodes the essence of derivatives, the basic
algebraic structure for encoding integration is a \emph{Rota-Baxter algebra} $(\galg, \cum)$,
meaning a $K$-algebra with a $K$-linear operator~$\cum\colon \galg \to \galg$ satisfying the
Rota-Baxter axiom
\begin{equation}
  \label{eq:rb-axiom}
  \cum f \cdot \cum g = \cum f \cum g + \cum g \cum f ,
\end{equation}
which we also call the \emph{weak} Rota-Baxter axiom in view of an important generalization that we
shall explain soon. At this juncture we should point out our \emph{parenthesis convention} for
nested Rota-Baxter operators:\footnote{Note also that here and henceforth we use \emph{operator
    notation} for~$\der$ and~$\cum$, as it is common in analysis. So the Leibniz rule
  is~$\der \, fg = f \der g + g \der f$ rather than~$d(fg) = f \, d(g) + g \, d(f)$ when using
  functional notation with~$d$. In the same vein, \eqref{eq:rb-axiom} would
  be~$P(f) \, P(g) = P(f \, P(g)) + P(g \, P(f))$ in functional notation with~$P$.} The scope
of~$\cum$ extends across all implicit products (denoted by juxtaposition), terminated by $\cdot$ as
on the left-hand side of~\eqref{eq:rb-axiom}. While this saves a host of parentheses, one must be
careful to distinguish~$\cum f \, \cum g$ and~$\cum f \cdot \cum g$.

It is often necessary to combine differential and Rota-Baxter structures, especially for application
areas like boundary problems, but also for the algebraic theory of distributions that we are about
to build up. There are two important ways of coupling the two structures. The weaker one is called a
\emph{differential Rota-Baxter algebra} $(\galg, \der, \cum)$ in~\cite{GuoKeigher2008}; by
definition this is a differential algebra~$(\galg, \der)$ and a Rota-Baxter algebra~$(\galg, \cum)$
such that the Rota-Baxter operator~$\cum$ is a section of the derivation~$\der$. Thus the
differential and Rota-Baxter structures are only coupled by the so-called \emph{section axiom}
$\der \circ \cum = 1_\galg$.

In many cases the coupling is stronger: We call~$(\galg, \cum, \der)$ an \emph{integro-differential
  algebra} if~$(\galg, \der)$ is a differential algebra and~$\cum\colon \galg \to \galg$ is a
$K$-linear operator satisfying the \emph{strong}\footnote{In~\cite{RosenkranzRegensburger2008a} we
  have called it the ``differential Rota-Baxter axiom''. In the present context we prefer to avoid
  this terminology as it might be misconstrued as characterizing differential Rota-Baxter algebras.}
\emph{Rota-Baxter axiom}~\cite[Eqn.~(6)]{RosenkranzRegensburger2008a}, namely
\begin{equation}
  \label{eq:str-rb-axiom}
  f \cum g = \cum fg + \cum f' \cum g .
\end{equation}
This terminology stems from the fact that the weak axiom~\eqref{eq:rb-axiom} is a consequence (just
replace~$f$ by~$\cum f$ in the strong axiom and use the section axiom) while there are differential
Rota-Baxter algebras that are not integro-differential algebras. The first such example was found by
G.~Regensburger, using a quotient of a polynomial ring~\cite[Ex.~3]{RosenkranzRegensburger2008a}. In
fact, we shall soon encounter a natural example from analysis, namely piecewise smooth functions
``interpreted in the $L^2$ style'' (Proposition~\ref{prop:pcw-drb}).

There are various equivalent characterizations of the difference between differential Rota-Baxter
and integro-differential algebras~\cite[Thm.~2.5]{GuoRegensburgerRosenkranz2012}, for example
that~$\im{\cum} \subset \galg$ is an ideal rather than a subalgebra, or that~$\cum$ is linear not just
over~$K$ but over~$\ker{\der}$. One reformulation that is important here involves the so-called
\emph{induced evaluation}
\begin{equation}
  \label{eq:eval}
  \evl := 1_\galg - \cum \der ,
\end{equation}
which is a just projector onto~$K$ along~$\im{\cum}$ for a general differential Rota-Baxter algebra
but moreover multiplicative for an integro-differential algebra.

We call an (integro-)differential algebra \emph{ordinary} if~$\ker{\der} = K$. In that case, $\evl$
is a linear functional for a general differential Rota-Baxter algebra and a character for an
integro-differential algebra. We will usually start from an ordinary integro-differential
algebra~$(\galg, \der, \cum)$. In fact, ordinary differential Rota-Baxter algebras are automatically
integro-differential (since then linearity over~$K$ is actually over~$\ker{\der}$).

The notion of evaluation is crucial for the algebraic theory of integration. For certain purposes
(cf.\@ Definition~\ref{def:shifted-evaluations}), it will thus be useful to extend it to the more
general setting of a plain Rota-Baxter algebra~$(\galg, \cum)$, where an \emph{evaluation} is any
character~$\evl\colon \galg \to K$ with~$\evl \cum = 0$. This generalizes also the case of so-called
\emph{ordinary Rota-Baxter algebras}~\cite{RosenkranzGaoGuo2015}, defined as Rota-Baxter
algebras~$(\galg, \cum)$ where~$\cum\colon \galg \to \galg$ is injective
and~$\im(\cum) \dotplus K = \galg$; the projector~$\evl$ onto~$K$ along~$\im(\cum)$ is then a
distinguished evaluation. As noted in~\cite{RosenkranzGaoGuo2015}, each ordinary Rota-Baxter algebra
corresponds to a unique integro-differential algebra~$(\galg, \der, \cum)$ such that~\eqref{eq:eval}
holds; $(\galg, \der, \cum)$ is thus ordinary in the usual sense of~$\ker{\der} = K$.

We think of the evaluation as evaluating at a certain point~$o$, namely the (implicit)
initialization point of the Rota-Baxter operator~$\cum = \cum_{\!o}^x$. While this is only
suggestive notation, we can consider an arbitrary character~$\phi\colon \galg \to K$ and turn the
given Rota-Baxter operator~$\cum$ into a new one~$\cum_{\!\phi} := (1-\phi) \cum$ that we call
\emph{initialized at~$\phi$}. Its \emph{initialized function space}~$\im{\cum_{\!\phi}}$ is given
by~$\ker{\phi}$, the functions ``vanishing at~$\phi$''. This may be viewed as an algebraic
description of integrals~$\cum_{\!\phi}^x$ from various fixed initialization points~$\phi$ to the
variable upper bound~$x$. We will have a more rigid connection when we construct piecewise functions
via shift maps (Definition~\ref{def:shifted-evaluations}), labeling the characters~$\phi = \ev_c$
by points~$c \in K$. In this context we will often use~$f(c)$ as a suggestive shorthand
for~$\ev_c(f)$, likewise~$\cum_{\!c}$ for~$\cum_{\!\ev_c}$.

\begin{example}
  \label{ex:std-analysis}
  The \emph{standard example} from analysis is~$\galg = C^\infty(\RR)$ with derivation
  $\der \, f(x) = df/dx$ and the Rota-Baxter operator~$\cum f(x) = \cum_{\!0}^x f(x) \, dx$.  Here
  the initialized functions~$f(x)$ are those with~$f(0) = 0$, corresponding to the
  evaluation~$\evl(f) = f(0)$. Any other evaluation~$\ev_c(f) := f(c)$ may be used for generating
  additional Rota-Baxter operators~$\cum_{\!c} \, f = \cum_{\!c}^x \, f(x) \, dx$.
\end{example}

Within this paper we cannot review the theory and algorithms for \emph{linear boundary problems}
over an ordinary integro-differential algebra~$(\galg, \der, \cum)$; let us refer the reader
to~\cite{RosenkranzRegensburger2008a}. Here we just recall that, given a collection~$\Phi$ of
characters~$\galg \to K$, one constructs the \emph{ring of integro-differential
  operators}~$\intdiffop$ with canonical direct decomposition
\begin{equation*}
  \intdiffop = \diffop \dotplus \rbop \dotplus (\Phi)
\end{equation*}
as $K$-vector spaces; here~$\diffop$ is the usual ring of differential operators and~$\rbop$ the
corresponding (nonunitary) ring of integral operators (generated over~$\galg$ by $\cum$)
while~$(\Phi)$ is the two-sided ideal generated by the character set~$\Phi$. The latter may be
characterized as left $\galg$-module generated by \emph{Stieltjes conditions}, defined as the
\emph{right} ideal~$\Phi \cdot \intdiffop$. In the standard example above, these are arbitrary
linear combinations of local conditions (derivative evaluations of any order) and global conditions
(definite integrals with premultiplied weighting functions).

A \emph{boundary problem} is a pair~$(T, \bspc)$ consisting of a monic differential
operator~$T \in \diffop$ of order~$n$ and a boundary space~$\bspc \subset \galg^*$ spanned by~$n$
linearly independent Stieltjes conditions~$\beta_1, \dots, \beta_n$. We call~$(T, \bspc)$ regular
iff~$\ker{T} \dotplus \bspc^\perp = \galg$, where the \emph{orthogonal} is defined as the admissible
function space~$\bspc^\perp := \{ f \in \galg \mid \forall_{\beta \in \bspc} \: \beta(f) = 0 \}$.
Regularity of~$(T, \bspc)$ is equivalent to the classical stipulation: There is exactly one
solution~$u \in \galg$ of
\begin{equation}
  \label{eq:bvp}
  \bvp{Tu = f, }{\beta(u) = 0 \quad (\beta \in \bspc)}
\end{equation}
for every forcing function~$f \in \galg$. Having a fundamental system~$u_1, \dots, u_n$ of the
homogeneous system, meaning a $K$-basis of~$\ker{T}$, this may be checked algorithmically: The
regularity of~$(T, \bspc)$ is equivalent to the regularity of the \emph{evaluation matrix} $\beta(u)
\in K^{n \times n}$ formed by evaluating each~$\beta_i \in \bspc$ on each~$u_j \in \ker{T}$.

The \emph{Green's operator}~$G$ of a regular boundary problem~$(T, \bspc)$ is characterized by the
relations~$TG = 1_\galg$ and~$\im{G} = \bspc^\perp$; it is the map~$f \mapsto u$ for~\eqref{eq:bvp}
and may be computed as an element of the operator ring~$G \in \intdiffop$. Using the natural action
of~$\intdiffop$ on~$\galg$, one may check that~$u := Gf$ actually satisfies~\eqref{eq:bvp}.

The above ring-theoretic notions (differential algebra, Rota-Baxter algebra, differential
Rota-Baxter algebra, integro-differential algebra) all have natural module-theoretic analogs. For
example, a \emph{differential Rota-Baxter module}~$(M, \vder, \vcum)$ over a differential
Rota-Baxter algebra~$(\galg, \der, \cum)$ consists of a derivation~$\vder\colon M \to M$ in the
sense that~$\vder \, f \phi = (\der f) \, \phi + f \, \vder \phi$ for~$f \in \galg$
and~$\phi \in M$, and a Rota-Baxter operator~$\vcum\colon M \to M$ characterized by the (weak)
Rota-Baxter axiom
\begin{equation*}
  \cum f \cdot \vcum \phi = \cum f \, \vcum \phi + \vcum (\cum f) \phi
\end{equation*}
for~$f \in \galg$ and~$\phi \in M$; confer also~\cite[Ex.~3.7(b)]{GaoGuoRosenkranz2015a}. It is now
also clear what one means by a Rota-Baxter module. The notion of \emph{integro-differential module},
however, is slightly more subtle since we must now distinguish the strong Rota-Baxter
axiom~\eqref{eq:str-rb-axiom} for coefficients and the one for module elements; we shall postpone
this discussion to later when it is needed (Lemma~\ref{lem:char-intdiff-mod}). For now let us just
agree to call~$(M, \vder, \vcum)$ \emph{ordinary} iff~$\ker{\vder} = K$.

When dealing with bivariate distributions, we shall come across algebras with two distinct
differential and/or Rota-Baxter structures. In such a case we shall speak of \emph{duplex}
structures. For example, a duplex Rota-Baxter
algebra~$(\galg_2, \der_x, \der_\xi, \cum^x, \cum^\xi)$ is characterized by requiring
both~$(\galg_2, \der_x, \cum^\xi)$ and~$(\galg_2, \der_\xi, \cum^x)$ to be Rota-Baxter algebras. (It
should be noted that from our algebraic viewpoint $\der_x, \der_\xi$ and~$\cum^x, \cum^\xi$ are
just two pairs of derivations and Rota-Baxter operators that we might as well call~$d, e$ and~$P, Q$
should we care to.)

\section{The Piecewise Extension}
\label{sec:pcwext}

The passage from smooth functions $C^\infty(\RR)$ to piecewise smooth functions $PC^\infty(\RR)$ can
be achieved by adding characteristic functions for all intervals $[a,b] \subset \RR$, and these can
in turn be generated by the well-known \emph{Heaviside function} $H(x) \in PC^\infty(\RR)$ as
$1_{[a,b]}(x) = H(x-a) \, H(b-x)$. We shall come back to this motivating instance
(Example~\ref{ex:pcw-smooth}).

Our present goal is to describe the passage from a suitable integro-differential algebra $\galg$ to
its \emph{piecewise extension} $\pcwext{\galg}$ in an abstract algebraic manner. As we have just
seen, it is sufficient to adjoin algebraic Heaviside functions to $\galg$. These can be defined in a
natural way if the ground field~$K$ of the given integro-differential algebra $\galg$ is an ordered
field\footnote{Distributions are usually defined as generalizations of functions of a \emph{real}
  variable, meaning either $\RR^n \to \RR$ or $\RR^n \to \CC$. The case of a complex variable
  $\CC^n \to \CC$ is effectively treated as $\RR^{2n} \to \CC$, ignoring the field structure of
  $\CC \cong \RR \times \RR$. Starting from an ordered field thus seems plausible.}; in classical
analysis this is of course~$K = \RR$.  For the algebraic construction it is sufficient to employ the
semigroup algebra of~$K_\sqcup$ over~$\galg$.

\begin{definition}
\label{def:pcwext}
Let $\galg$ be an algebra over an ordered ring~$(K, <)$. Then we define
its \emph{piecewise extension} as $\pcwext{\galg}:=\galg [K_{\sqcup}]$.
\end{definition}

We denote the identity element of $\pcwext{\galg}$ by~$1$ and the other generators
by~$H_a \: (a \in K)$.  Then $\pcwext{\galg}$ can be viewed as the quotient of the polynomial
ring~$\galg[H_a \mid a \in K]$ modulo the ideal generated by the relations
$H_a H_b - H_{a \sqcup b} \: (a, b \in K)$. Moreover, linearity of the order on~$K$ gives rise to
the \emph{exchange law}~$H_{a \sqcup b} + H_{a \sqcap b} = H_a + H_b \: (a,b \in K)$, which implies
in turn that the piecewise extension $\pcwext{\galg} = \galg [K_{\sqcup}]$ is isomorphic to its
\emph{dual}~$\galg [K_{\sqcap}]$ via $H_a \mapsto 1-\bar{H}_a$, where the $\bar{H}_a$ denote the
generators of the dual. We will restrict ourselves to~$\pcwext{\galg} = \galg [K_{\sqcup}]$,
using~$\bar{H}_a := 1-H_a \in \galg [K_{\sqcup}]$ as shorthand notation. Introducing the alternative
notation~$H(x-a) :=H_a$ and~$H(a-x) :=\bar{H}_a$, the above relations entail
\begin{align*}
  H(x-a) \, H(x-b) &= H(x-a \sqcup b),\\
  H(a-x) \, H(b-x) &= H(x-a \sqcap b),\\
  H(a-x) \, H(x-b) &= 0 \quad \text{if~$a < b$}.
\end{align*}
In the classical setting (Example~\ref{ex:std-analysis}), this provides a faithful model of the
(rising and falling) Heaviside functions based at various points~$a,b \in \RR$. We will elaborate on
the analysis setting in due course (Example~\ref{ex:pcw-smooth}).

Nevertheless, one may wonder if there is any intrinsically \emph{algebraic characterization}. One
possibility is this: Call an algebra~$\galg$ over an ordered ring~$(K, <)$ \emph{order-related} if
it encodes the order of the ground ring within its multiplicative structure, i.e.\@ if there exists
a monoid embedding $H \colon (K, {\sqcup}) \hookrightarrow (\galg, \cdot)$ so that
$H_a H_b=H_{a{\sqcup}b} \: (a,b \in K)$. An order-related morphism between order-related rings is an
algebra homomorphism~$\zeta\colon \galg \to \tilde{\galg}$ such
that~$\zeta(H_a) = \tilde{H}_a \: (a \in K)$. Then the piecewise extension $\pcwext{\galg}$ can be
characterized as \emph{universal order-related extension algebra} of $\galg$, meaning every
embedding~$\galg \hookrightarrow A$ into an order-related algebra~$A$ factors through the algebra
embedding $\galg \hookrightarrow \pcwext{\galg}$ via a unique order-related
morphism~$\pcwext{\galg} \to A$. The verification is straightforward.

In order to introduce a Rota-Baxter operator $\cum\colon \pcwext{\galg} \to \pcwext{\galg}$
encapsulating the integration of piecewise continuous---in particular: piecewise smooth---functions,
we need a notion of algebraic domain with \emph{multiple evaluation points} (intuitively this is
because integrating against a step function based at~$a \in K=\RR$ amounts to starting off the
integral at~$a$, with integration constant induced by evaluation at~$a$). One way to make this
precise is in terms of a \emph{shifted Rota-Baxter algebra}: using an action of the additive group
of the ground field for shifting evaluation to arbitrary field points.

\begin{definition}
  \label{def:shifted-evaluations}
  By a \emph{shift map} on an algebra~$\galg$ we mean a group homomorphism
  $$S\colon (K, +) \to \big( \Aut(\galg), \circ \big), \qquad\text{written } S_a \, f = f(x+a) \quad\text{for $a \in K$, $f \in \galg$}.$$
  If Rota-Baxter/derivation operators are present, we require compatibility conditions:
  \begin{enumerate}
  \item We call~$(\galg, \cum, S)$ a \emph{shifted Rota-Baxter algebra} if~$S$ is a shift map on a Rota-Baxter algebra~$(\galg, \cum)$ with evaluation~$\evl$ such that~$[S_c, \cum] = \ev_c \cum$ for all~$c \in K$, where~$\ev_c := \evl \circ S_c$ is called the evaluation at~$c$.
  \item We call~$(\galg, \der, S)$ a \emph{shifted differential algebra} if~$S$ is a shift map on a differential algebra~$(\galg, \der)$ such that~$[S_c, \der] = 0$ for all~$c \in K$.
  \item We call~$(\galg, \der, \cum, S)$ a \emph{shifted differential Rota-Baxter algebra} if~$(\galg, \der, \cum)$ is a differential Rota-Baxter algebra such that both~$(\galg, \cum, S)$ and~$(\galg, \der, S)$ are shifted.
  \end{enumerate}
  In the sequel, we suppress the shift map~$S$ when referring to structures such as~$(\galg, \der, \cum, S)$.
\end{definition}

The most important \emph{examples} are of course the Rota-Baxter
algebra~$\big( C(\RR), \cum_{\!0}^x \big)$ and the integro-differential
algebra~$\big( C^\infty(\RR), \cum_{\!0}^x, \tfrac{d}{dx} \big)$, both with the shift
map~$f(x) \mapsto f(x+a)$ and the corresponding evaluations~$\ev_c f(x) = f(c)$.

In a shifted Rota-Baxter algebra~$(\galg, \cum)$, all evaluations~$\ev_c\colon \galg \to K$ are
characters but~$\ev_0 = \evl$ is distinguished\footnote{The formulation in terms of a distinguished
  character~$\evl$ is practical for applications. A more symmetric formulation would be to use the
  \emph{equitable setup} described in~\cite{RosenkranzSerwa2015}, where one starts from a whole
  family of ordinary Rota-Baxter operators~$\cum_{\!a}\colon \galg \to \galg$ whose induced
  evaluations~$\ev_a$ are required to satisfy the general shift
  relations~$\smash{[S_c, \cum_{\!a}] = \cum_{\!a}^{a+c}}$ for all~$a, c \in K$, where the
  right-hand integral is defined as above. In the asymmetric setup used here, these relations can be
  derived by a straightforward calculation.} by annihilating the given Rota-Baxter
operator~$\cum$. Using the evaluations, we can introduce \emph{shifted Rota-Baxter
  operators}~$\smash{\cum_{\!c}}\colon \galg \to \galg$ and \emph{definite
  integrals}~$\smash{\cum_{\!c}^d}\colon \galg \to K$ by
\begin{equation*}
  \cum_{\!c} := (1-\ev_c) \, \cum \qquad\text{and}\qquad \cum_{\!c}^d:=\ev_d\cum_{\!c}.
\end{equation*}
One checks immediately that~$\cum_{\!c} = S_{-c} \, \cum S_c$
and~$\cum_{\!c}^d = \cum_{\!c}-\cum_{\!d}$ are equivalent definitions. Obviously, each
$(\galg, \cum_c)$ is a Rota-Baxter algebra with evaluation~$\ev_c$ for~$c \in K$.

Let us now return to the task of defining the Rota-Baxter operator on~$\pcwext{\galg}$, assuming a
shifted Rota-Baxter algebra~$(\galg, \cum)$. Note first that every
element~$\zeta \in \pcwext{\galg}$ can be written uniquely as
\begin{equation}
  \label{eq:pcw-exp}
  \zeta = f + \sum_{a \in K} f_a H_a \qquad (f, f_a \in \galg)
\end{equation}
with almost all~$f_a$ zero. Hence it suffices to define~$\cum\colon \pcwext{\galg} \to \pcwext{\galg}$ as the unique extension
of~$\cum\colon \galg \to \galg$ such that
\begin{equation}
\label{eq:pcw-def1}
  \cum f H_a = (\cum_{\!\pospart{a}} f) \, H_a-(\cum_{\!\negpart{a}}^0 f)\, \bar{H}_a = (\cum_{\!a}
  \, f) \, H_a + \bar{H}(a) \, \cum_{\!0}^a \, f
\end{equation}
for all~$f \in \galg$ and~$a \in K$. For the sake of symmetry, let us also note that then
\begin{equation}
\label{eq:pcw-def2}
  \cum f \bar{H}_a = (\cum_{\!\negpart{a}} f) \, \bar{H}_a-(\cum_{\!\pospart{a}}^0 f)\, H_a
  = (\cum_{\!a} \, f) \, \bar{H}_a + H(a) \, \cum_{\!0}^a \, f.
\end{equation}
In fact, \eqref{eq:pcw-def1} and \eqref{eq:pcw-def2} are equivalent.\footnote{\label{fn:init-pt}Note
  that the choice of the splitting point~$0 \in K$ in~\eqref{eq:pcw-def1}--\eqref{eq:pcw-def2} is to
  some extent arbitrary. Any other point of~$K$ would yield the same
  operator~$\cum\colon \pcwext{\galg} \to \pcwext{\galg}$; in particular one could also choose the
  initialization point of the given Rota-Baxter operator~$\cum$ of~$\galg$. Here we have picked
  out~$0 \in K$ for convenience.}

\begin{figure}[ht]
  \centering
  \includegraphics[width=0.45\textwidth]{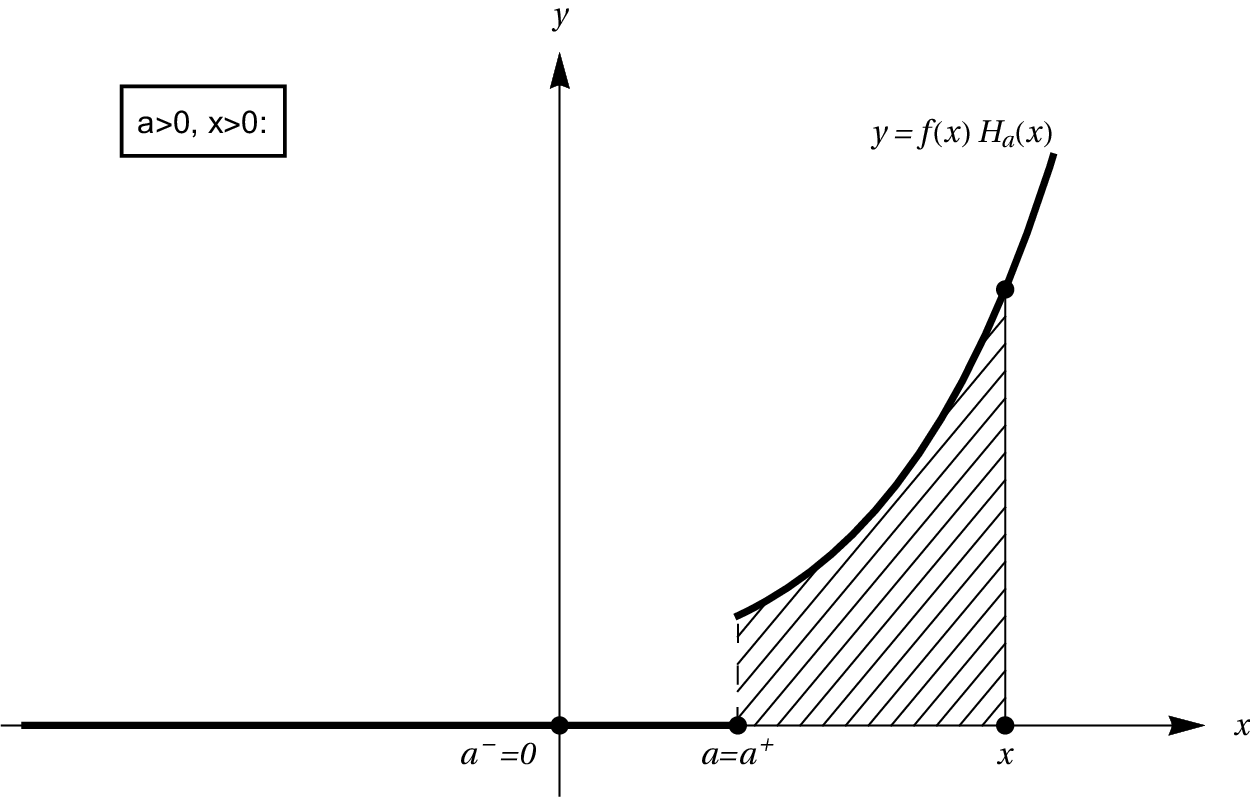}\hspace{0.025\textwidth}%
  \includegraphics[width=0.45\textwidth]{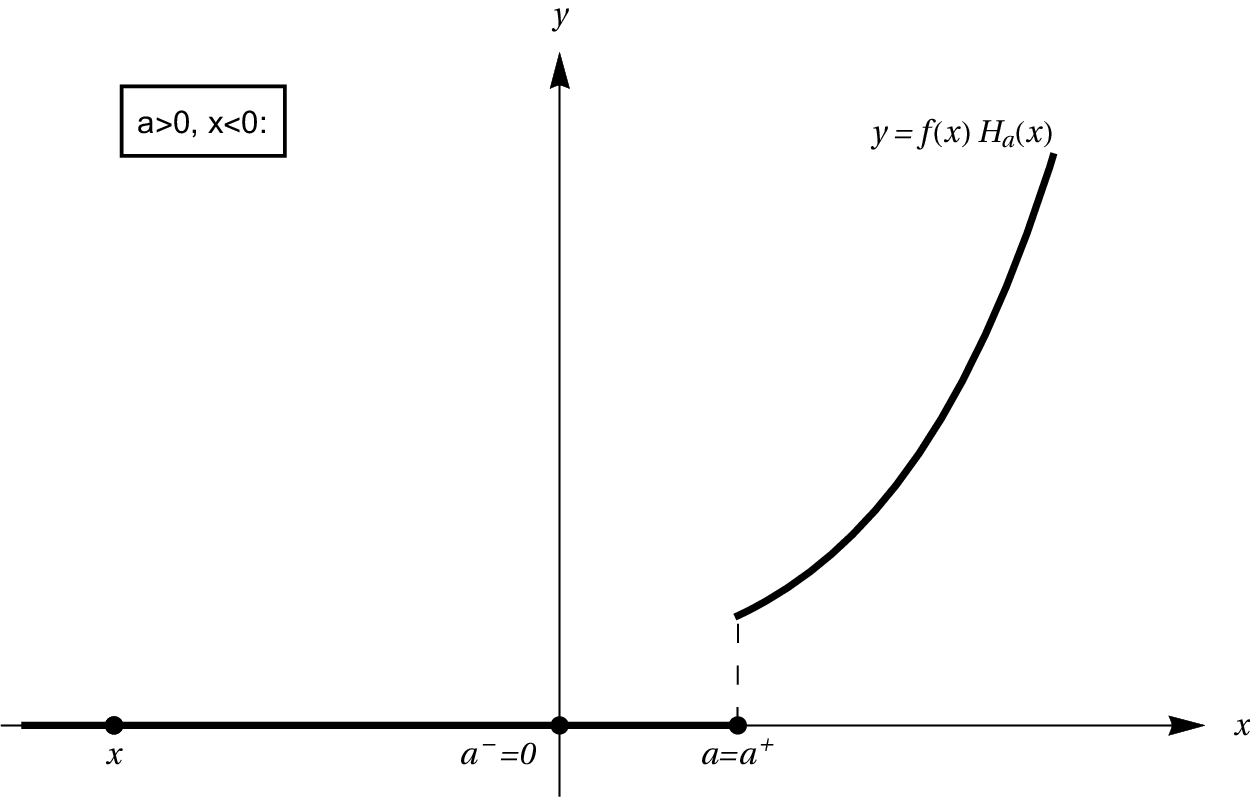}\\[2em]
  \includegraphics[width=0.45\textwidth]{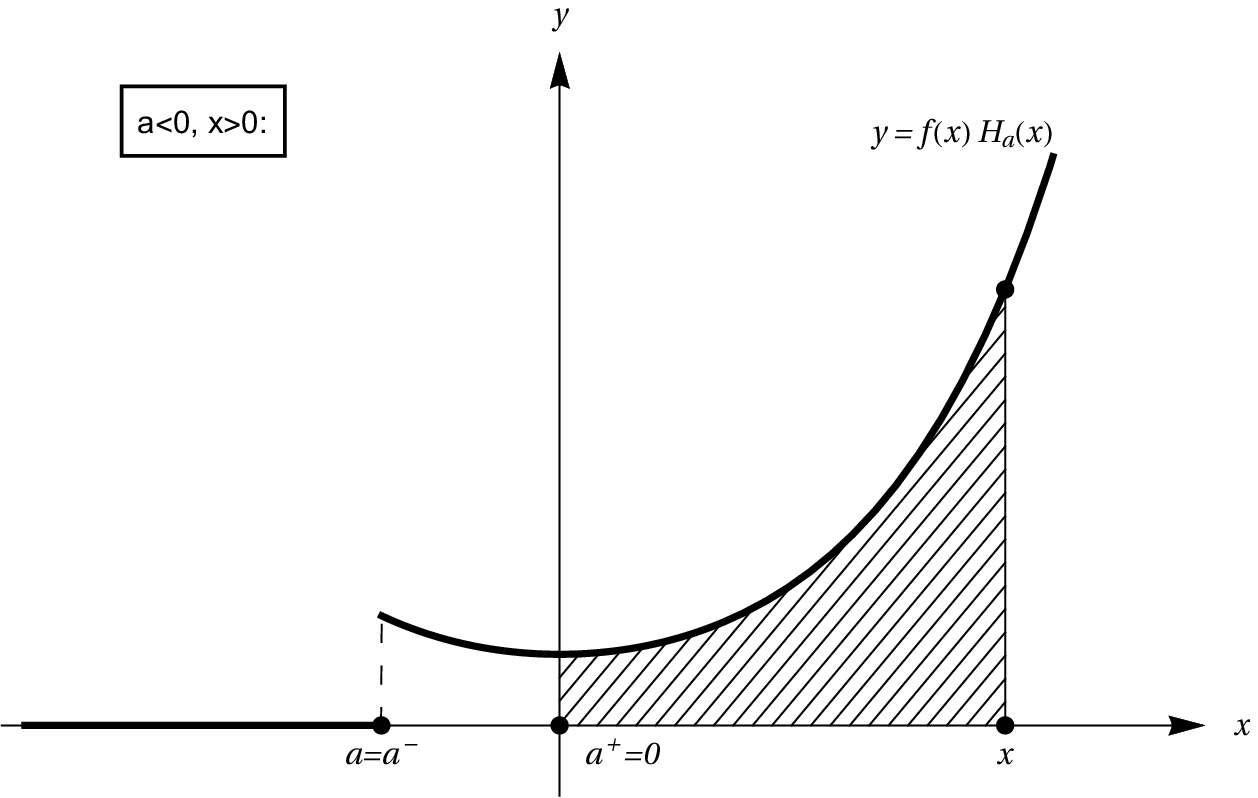}\hspace{0.025\textwidth}%
  \includegraphics[width=0.45\textwidth]{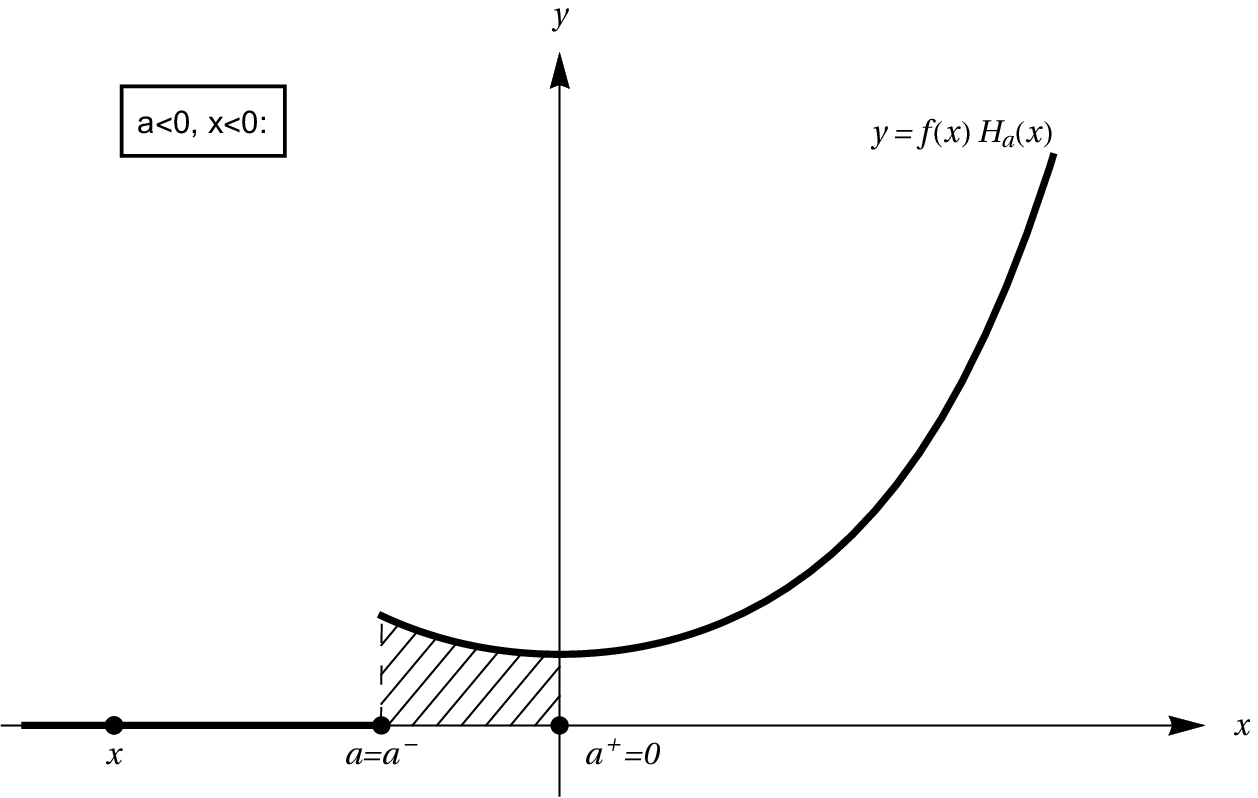}
  \caption{Integrating piecewise functions}
  \label{fig:pcw-int}
\end{figure}

The motivation for definition~\eqref{eq:pcw-def1} comes from the standard
example~$\galg = C^\infty(\RR)$ where it reproduces the usual \emph{Riemann
  integral}~$\cum = \cum_{\!0}^x$.  This is illustrated in Figure~\ref{fig:pcw-int}, where we have
visualized~$\cum f H_a$ with~$f(x) = \cosh{x}$ in the four different cases corresponding to the
signs of~$x$ and~$a$. While both forms of~\eqref{eq:pcw-def1} are obvious from the figure, their
identity is a general fact of ordinary shifted integro-differential algebras~$\galg$ as one can see
by a straightforward calculation using the generic relations~\eqref{eq:gen-rel} mentioned below.

In a similar way we can also define the \emph{shifts}~$S_a\colon \pcwext{\galg} \to
\pcwext{\galg}$.
Since in the standard examples, $f(x) \mapsto f(x+a)$ shifts the graph of~$f$ by $a$ units to the
left, we are led to~$S_a(H_b) := H_{b-a}$. This fixes~$S_a\colon \pcwext{\galg} \to \pcwext{\galg}$
in view of~\eqref{eq:pcw-exp} by requiring it to be an algebra homomorphism extending the given
shifts~$S_a\colon \galg \to \galg$. Obviously, the group law~$S_a \circ S_b = S_{a+b}$ is satisfied.

Finally, we define $\evl\colon \pcwext{\galg} \to K$ as the unique character
extending~$\evl\colon \galg \to K$ by the equi\-valent stipulations $\evl(H_a) = \bar{H}(a)$ or
$\evl(\bar{H}_a) = H(a)$. Using again $\ev_c := \evl \circ S_c\colon \pcwext{\galg} \to K$ as
shorthand notation, we have also~$\ev_c(H_a) = \bar{H}(a-c)$ and~$\ev_c(\bar{H}_a) = H(a-c)$. At
this point it should be noted that we could also use the right continuous convention for the
Heaviside operator~$H(a)$ but not the symmetric one: Indeed, the relation~$H_a H_b = H_{a \sqcup b}$
implies $\bar{H}(a) \, \bar{H}(b) = \bar{H}(a \sqcup b)$, which is satisfied by all three
conventions if~$(a,b) \neq (0,0)$; but the remaining case~$\bar{H}(0)^2 = \bar{H}(0)$ entails
$\bar{H}(0) \in \{ 0, 1 \}$ and thus rules out the symmetric convention.\footnote{We can only see
  one apparent advantage of the symmetric convention, in trying to build up a derivation with a
  Leibniz rule for Heavisides---but even this is ultimately doomed to fail: see
  Remark~\ref{rem:no-diffring}.} It is easy to check that~$\evl$ is indeed an \emph{evaluation}
on~$(\pcwext{\galg}, \cum)$.

\begin{proposition}
  \label{prop:pcw}
  Let~$(\galg, \cum)$ be an ordinary shifted Rota-Baxter algebra over an ordered field~$K$. Then~$(\pcwext{\galg}, \cum)$ 
  is a shifted Rota-Baxter algebra extending~$(\galg, \cum)$.
\end{proposition}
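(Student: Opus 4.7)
The plan is to reduce every assertion, via $K$-linearity and the unique normal form~\eqref{eq:pcw-exp}, to generators of shape $fH_a$ with $f\in\galg$ and $a\in K$, and then to check the three structural properties in turn: the Rota-Baxter axiom~\eqref{eq:rb-axiom} on $\pcwext{\galg}$; that $\evl$ is an evaluation on $(\pcwext{\galg},\cum)$; and the shift compatibility $[S_c,\cum]=\ev_c\cum$. That the inclusion $\galg\hookrightarrow\pcwext{\galg}$ intertwines $\cum$, $\evl$, and $S_c$ is automatic, since all three operators on $\pcwext{\galg}$ were defined as extensions of their counterparts on $\galg$.

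The principal obstacle is the weak Rota-Baxter axiom for $\zeta=fH_a$ and $\eta=gH_b$. I would expand both $\cum(fH_a)\cdot\cum(gH_b)$ and $\cum(fH_a\cdot\cum(gH_b))+\cum(gH_b\cdot\cum(fH_a))$ via~\eqref{eq:pcw-def1} and the semigroup law $H_aH_b=H_{a\sqcup b}$, then match coefficients in the normal form~\eqref{eq:pcw-exp}. Each expansion splits into a coefficient of $H_{a\sqcup b}$, of $H_a$, of $H_b$, and a $K$-valued (Heaviside-free) part; the $H_a$- and $H_b$-contributions pair up trivially across the equation. For the $H_{a\sqcup b}$-coefficient, assuming WLOG $a\le b$ so that $a\sqcup b=b$, one must show $(\cum_af)(\cum_bg)=\cum_b(f\,\cum_bg)+\cum_b(g\,\cum_af)$; writing $\cum_af=\cum_bf+\cum_a^bf$ with the constant $\cum_a^bf\in K$ and invoking the Rota-Baxter axiom for $(\galg,\cum_b)$ together with $K$-linearity of $\cum_b$ closes this case. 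The $K$-valued contributions match via a combinatorial identity closed by $\bar{H}(a)\bar{H}(b)=\bar{H}(a\sqcup b)$ (a consequence of the exchange law $H_{a\sqcup b}+H_{a\sqcap b}=H_a+H_b$) together with the character property $\ev_{a\sqcup b}[(\cum f)(\cum g)]=(\cum_0^{a\sqcup b}f)(\cum_0^{a\sqcup b}g)$. Though voluminous, the bookkeeping is mechanical and forced by the design of~\eqref{eq:pcw-def1}.

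The remaining properties are lighter. For $\evl\cum=0$ on $fH_a$, applying $\evl$ to~\eqref{eq:pcw-def1} and using $\evl(H_a)=\bar{H}(a)$ together with $\ev_0\cum_a=\evl\cum-\ev_a\cum=-\cum_0^a$, the two summands cancel. For the shift map, $H_b\mapsto H_{b-c}$ respects the defining relation $H_bH_{b'}=H_{b\sqcup b'}$ of $\pcwext{\galg}=\galg[K_\sqcup]$ by translation-invariance of $\sqcup$ on $K$, so each $S_c$ extends uniquely to a $K$-algebra automorphism of $\pcwext{\galg}$, and the group law $S_cS_{c'}=S_{c+c'}$ holds on generators by inspection. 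Finally, $[S_c,\cum]=\ev_c\cum$ on $fH_a$ is obtained by expanding $S_c\cum(fH_a)$ and $\cum((S_cf)H_{a-c})$ via~\eqref{eq:pcw-def1}: the $\galg$-valued parts agree because the analogous shift relation already holds on $(\galg,\cum)$, while the Heaviside-valued parts match by a direct $K$-identity relating $\bar{H}(a)$, $\bar{H}(a-c)$, and the boundary integrals $S_c\cum_0^a f$ versus $\cum_0^{a-c}(S_cf)$.
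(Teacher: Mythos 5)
Your proposal is correct in substance, but on the main point---the Rota-Baxter axiom---it takes a genuinely different route from the paper. You verify~\eqref{eq:rb-axiom} directly for an arbitrary pair of monomial generators $fH_a$, $gH_b$, matching coefficients in the normal form; the key identity you isolate for the leading coefficient (with $a\le b$), namely $(\cum_{\!a}f)(\cum_{\!b}g)=\cum_{\!b}(f\cum_{\!b}g)+\cum_{\!b}(g\cum_{\!a}f)$, is true and is proved exactly as you indicate, by splitting $\cum_{\!a}=\cum_{\!b}+\cum_{\!a}^{\,b}$ with $\cum_{\!a}^{\,b}f\in K$ (ordinariness), the Rota-Baxter axiom of $(\galg,\cum_{\!b})$ and $K$-linearity; the Heaviside-free parts indeed close via the character property $\ev_{a\sqcup b}(\cum f\cdot\cum g)=(\cum_{\!0}^{a\sqcup b}f)(\cum_{\!0}^{a\sqcup b}g)$ and $\bar{H}(a)\,\bar{H}(b)=\bar{H}(a\sqcup b)$. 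The paper instead invokes polarization (characteristic zero) to reduce the symmetric bilinear identity to the single diagonal instance $(\cum fH_a)^2=2\cum fH_a\cum fH_a$, which shrinks the bookkeeping to one sign distinction on $a$ and the single identity $\cum_{\!a}^{0}f\cum_{\!a}f=\cum_{\!0}^{a}f\cum f$. Your route is longer but makes explicit where each hypothesis (ordinariness, the character property of the shifted evaluations, the semigroup law $H_aH_b=H_{a\sqcup b}$) enters, and it avoids relying on the polarization trick. Your treatment of $\evl\cum=0$, of the well-definedness and group law of the shifts, and of the compatibility $[S_c,\cum]=\ev_c\cum$ coincides with the paper's (the last is precisely the paper's ``direct proof'' via the generic relations~\eqref{eq:gen-rel} and $\cum_{\!0}^{a-c}S_cf=\cum_{\!c}^{a}f$).

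One small omission to patch: the normal form~\eqref{eq:pcw-exp} has a Heaviside-free component, so the spanning set is $\galg$ together with the monomials $fH_a$; besides the pairs $(fH_a,gH_b)$ you must therefore also verify~\eqref{eq:rb-axiom} for mixed pairs $(f,gH_b)$ with $f\in\galg$ (the pure pairs $(f,g)$ are just the axiom in $\galg$, which you rightly call automatic). The mixed case is not subsumed by your computation, but it yields to the same bookkeeping: writing $\cum f=\cum_{\!b}f+\cum_{\!0}^{b}f$, the $H_b$-coefficients match by the Rota-Baxter axiom of $\cum_{\!b}$ and $K$-linearity, and the Heaviside-free parts match because $\ev_b$ is a character with $\ev_b\cum f=\cum_{\!0}^{b}f$. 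So this is a routine addition rather than a flaw---and, for what it is worth, the paper's polarization argument is equally terse, displaying only the diagonal generator case.
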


\begin{proof}
 Since~$K$ is of characteristic zero, by the
  polarization identity, it suffices to prove
  \begin{equation}
    \label{eq:bxt-pol}
    (\cum f H_a)^2 = 2 \cum f H_a \cum f H_a
  \end{equation}
  for~$f \in \galg$ and~$a \in K$. Since~$H_a$ is idempotent, the definition
  of~$\cum\colon \pcwext{\galg} \to \pcwext{\galg}$ and the Rota-Baxter axioms
  of~$\cum_{\!\pospart{a}}$ and~$\cum_ {\!\negpart{a}}$
  give~$2 \, (\cum_{\!\pospart{a}} f\cum_{\!\pospart{a}} f) \, H_a + 2 \, (\cum_{\!\negpart{a}}^0 f
  \cum_{\!\negpart{a}} f) \,\bar{H_a}$
  for the left-hand side of~\eqref{eq:bxt-pol}. Likewise, we get
  $2 \, (\cum_{\!\pospart{a}} f\cum_{\!\pospart{a}} f) \, H_a-2 \, (\cum_{\!\negpart{a}}^0 f
  \cum_{\!\pospart{a}} f) \,\bar{H_a}$
  on the right-hand side of~\eqref{eq:bxt-pol}, using twice the definition
  of~$\cum\colon \pcwext{\galg} \to \pcwext{\galg}$. It remains to check that the second terms are
  equal on both sides. For $a\geq 0$ both terms vanish while for $a<0$ the problem reduces to
  checking $\cum_{\!a}^0 f \cum_{\!a} f = \cum_{\!0}^a f \cum f $. Splitting the inner integral
  $\cum_{\!a} \, f=\cum_{\!a}^0 \, f+\cum \, f$ on the left-hand side yields
  $(\cum_{\!0}^a f )^2-\cum_{\!0}^a f \cum f$ since $\cum_{\!a}^0$ is $K$-linear and
  $\cum_{\!a}^0 f \in K$ by~$(\galg, \cum)$ being ordinary. Then the result follows from the
  Rota-Baxter axiom of $(\galg, \cum)$.

  As for any monoid algebra~\cite[p.~106]{Lang2002}, the
  map~$\galg \to \galg[K_\sqcup], \: f \mapsto f \cdot 1_{\pcwext{\galg}}$ is an embedding,
  hence~$\galg$ is a $K$-subalgebra of~$\pcwext{\galg} = \galg[K_\sqcup]$. Since the Rota-Baxter
  operator on~$\pcwext{\galg}$ has been defined as an extension, $(\pcwext{\galg}, \cum)$ is indeed
  a Rota-Baxter extension of~$(\galg, \cum)$.

  We have already seen that the~$S_a\colon \pcwext{\galg} \to \pcwext{\galg}$ defined above yield a
  shift map on~$\pcwext{\galg}$, and that the character~$\evl\colon \pcwext{\galg} \to K$ defined
  above is an evaluation on~$(\pcwext{\galg}, \cum)$. Hence it remains to prove the compatibility
  relation~$[S_c, \cum] = \ev_c \cum$ for the induced evaluations~$\ev_c = \evl \circ S_c$.
  By~\eqref{eq:pcw-exp}, we need only verify the relation on elements of the
  form~$f H_a \in \pcwext{\galg}$; we know it is satisfied for~$f \in \galg$ due to the shift
  relation on~$\galg$. The verification may be done by a four-fold case distinction based on the
  positivity of~$a$ and~$a-c$. For an alternative direct proof one employs the generic identities
  (valid for Rota-Baxter algebras over ordered fields)
  \begin{equation}
    \label{eq:gen-rel}
    \cum_{\!\pospart{s}} = \cum + H(s) \, \cum_{\!s}^0, \quad
    \cum_{\!\negpart{s}} = \cum + \bar{H}(s) \, \cum_{\!s}^0
    \qquad\text{and}\qquad
    \cum_{\!\pospart{s}}^0 = H(s) \, \cum_{\!s}^0, \quad \cum_{\!\negpart{s}}^0 = \bar{H}(s) \,
    \cum_{\!s}^0,
  \end{equation}
  together with the simple consequence~$\cum_{\!0}^{a-c} S_c f = \cum_{\!c}^a f$ of the shift
  relation on~$\galg$. Doing so
  yields~$\cum_{\!0}^c f + H(a) \, \cum_{\!a}^0 f + H(a-c) \, \cum_{\!c}^a f$ for both sides
  of~$[S_c, \cum] f H_a = \ev_c \cum f H_a$.
 \end{proof}

We have now an algebraic description of integration on rings of piecewise functions, constructed
from Heavisides. If all functions are piecewise smooth (cf.\@ Example~\ref{ex:pcw-smooth}), we can
add a derivation~$\der$ to obtain a differential Rota-Baxter algebra that is, however, \emph{not} an
integro-differential algebra (Proposition~\ref{prop:pcw-drb}). Normally, only in the latter case do
we speak of an \emph{induced evaluation}~$\evl := 1_\galg - \cum \der$, but since the analogous
concept is also useful in differential Rota-Baxter algebras we introduce this operation now in the
general context.

\begin{definition}
 \label{def:evaluation}
 Let $(\galg, \der, \cum)$ be a differential Rota-Baxter algebra. Then $\psevl = 1-\cum \circ \der$
 is called the \emph{induced pseudo-evaluation} of $(\galg, \der, \cum)$.
\end{definition}

Assume now that~$(\galg, \der, \cum)$ is a differential Rota-Baxter algebra such
that~$(\galg, \cum)$ satisfies the conditions of Proposition~\ref{prop:pcw}. Then we define a
\emph{derivation on the piecewise extension}~$\pcwext{\galg}$ by extending the derivation on~$\galg$
by zero. In other words, we set~$\der H_a = 0$ for all $a \in K$; then
$\der\colon \pcwext{\galg} \to \pcwext{\galg}$ is uniquely determined by the Leibniz rule. Note that
the ring of constants is enlarged to~$\ker (\partial) = K[H_a \mid a \in K]$. This reflects the
viewpoint of analysis that the derivative of the Heaviside function~$H(x-a) \in L^2(\RR)$
vanishes. Of course, this is in stark contrast to the more ambitious treatment via distributions
taken up in the next section (where the simple derivation from above is no longer in use).

\begin{proposition}
\label{prop:pcw-drb}
Let~$(\galg, \der, \cum)$ be an ordinary shifted differential Rota-Baxter algebra over the ordered
field~$(K, <)$. Then~$(\pcwext{\galg}, \der, \cum)$ is a shifted differential Rota-Baxter extension
algebra whose induced pseudo-evaluation
   \begin{equation}
     \label{eq:pcw-drb}
    \psevl(f H_a) = \ev_a(f) \, H_a + \ev_0(f) - \ev_{\negpart{a}}(f) = 
    \begin{cases}
    \ev_0(f) - \ev_a(f) \, \bar{H_a} & \text{if $a\leq 0$,}\\
    \ev_a(f) \, H_a & \text{if $a \geq 0$,}
    \end{cases}
  \end{equation}
  is not multiplicative. Hence~$(\pcwext{\galg}, \der, \cum)$ is not an integro-differential algebra.
\end{proposition}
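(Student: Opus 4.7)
The plan is to supply the derivation side of the structure (since Proposition~\ref{prop:pcw} already gives the shifted Rota-Baxter part), then compute $\psevl$ by direct expansion, and finally read off failure of multiplicativity from the resulting formula.

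\emph{Stage one: shifted DRB structure.} I must verify (i) that $\der$ is a well-defined derivation extending that of $\galg$, (ii) the section axiom $\der \circ \cum = 1_{\pcwext{\galg}}$, and (iii) shift compatibility $[S_c, \der] = 0$. For (i), the prescription $\der H_a = 0$ is consistent with the defining relations of the semigroup algebra $\galg[K_\sqcup]$ because $\der H_a \cdot H_b + H_a \cdot \der H_b = 0 = \der H_{a \sqcup b}$. For (ii), applying $\der$ to~\eqref{eq:pcw-def1} and using $\der H_a = 0$, $\cum_{\!0}^a f \in K$, and $\der \cum_{\!a} = 1_\galg$ (either via $\cum_{\!a}=S_{-a}\cum S_a$ or via $\cum_{\!a}=(1-\ev_a)\cum$ together with $\ev_a\cum f\in K$) yields $\der \cum fH_a = fH_a$. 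For (iii), $[S_c,\der]$ is an $S_c$-twisted derivation, vanishing on $\galg$ by assumption and on $H_a$ since $\der H_{a-c} = 0 = S_c \der H_a$.

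\emph{Stage two: formula for $\psevl$.} By additivity it suffices to treat $\psevl(fH_a)$. Since $\der H_a = 0$, the Leibniz rule and~\eqref{eq:pcw-def1} give
\[ \psevl(fH_a) = fH_a - \cum(\der f \cdot H_a) = fH_a - (\cum_{\!a}\der f)\,H_a - \bar{H}(a)\,\cum_{\!0}^a \der f. \]
Ordinariness of $(\galg,\der,\cum)$ makes it integro-differential, so $\cum\der f = f - \evl(f)$. Applying $(1-\ev_a)$ and $\ev_a-\ev_0$ respectively yields $\cum_{\!a}\der f = f - \ev_a(f)$ and $\cum_{\!0}^a\der f = \ev_a(f) - \ev_0(f)$, whence
\[ \psevl(fH_a) = \ev_a(f)\,H_a + \bar{H}(a)\bigl(\ev_0(f) - \ev_a(f)\bigr). \]
Under the left-continuous convention $\bar{H}(a) = 1$ iff $a \leq 0$, in which case $\negpart{a}=a$; for $a>0$ one has $\bar{H}(a)=0$ and $\negpart{a}=0$. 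In both cases the bracketed term reduces to $\ev_0(f)-\ev_{\negpart{a}}(f)$, matching the first form of~\eqref{eq:pcw-drb}; the piecewise split is then immediate, with the boundary case $a=0$ cross-checked via $1-\bar{H}_0=H_0$.

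\emph{Stage three: non-multiplicativity.} For $a>0$ the formula yields $\psevl(fH_a) = \ev_a(f)\,H_a$ while $\psevl(f)\,\psevl(H_a) = \evl(f)\,H_a = \ev_0(f)\,H_a$. Equality for every $f\in\galg$ would force $\ev_a = \ev_0$, which fails in the standard analysis setting of Example~\ref{ex:std-analysis}: taking $f = \cum 1$ gives $\ev_a(f) = a \neq 0 = \ev_0(f)$. Hence $\psevl$ is not multiplicative, so $(\pcwext{\galg},\der,\cum)$ cannot be an integro-differential algebra. I expect the main obstacle to be the careful bookkeeping in stage two, particularly reconciling the $\bar{H}(a)$ and $\ev_{\negpart{a}}$ expressions across the sign cases; once dispatched, the remaining steps reduce to routine verifications.
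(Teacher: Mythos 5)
Your Stages one and two are correct and essentially coincide with the paper's argument: the paper likewise obtains the differential Rota-Baxter and shift properties from Proposition~\ref{prop:pcw} together with $\der H_a = 0$ and the check of $[S_c,\der]$ on elements $fH_a$, and it derives~\eqref{eq:pcw-drb} by the same direct expansion of $1-\cum\der$ on $fH_a$ (the paper packages the sign bookkeeping via the identity $\ev_a = \ev_{\pospart{a}} - \ev_0 + \ev_{\negpart{a}}$ where you use $\bar{H}(a)$; the content is identical).

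The gap is in Stage three. The proposition claims non-multiplicativity for an \emph{arbitrary} ordinary shifted differential Rota-Baxter algebra over an \emph{arbitrary} ordered field $(K,<)$, but after your (correct) reduction ``multiplicativity would force $\ev_a = \ev_0$ for all $a>0$'' you verify $\ev_a \neq \ev_0$ only in the standard analysis setting of Example~\ref{ex:std-analysis}. That establishes the statement only for $\galg = C^\infty(\RR)$, not for the general $\galg$ in the hypothesis, and this last step is doing real work: nothing in Definition~\ref{def:shifted-evaluations} alone forces the evaluations to differ (the trivial shift $S_c = \id$ satisfies every axiom, makes all $\ev_c$ equal to $\evl$, and then your own formula gives $\psevl(fH_a) = \evl(f)\,H_a$, which \emph{is} multiplicative). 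The paper instead produces the failure inside the general algebra: every integro-differential algebra contains $(K[x],\cum)$ with $x := \cum 1$, and, taking $\ev_a(x) = a$, one computes $\psevl(xH_1 \cdot xH_2) = \psevl(x^2H_2) = 4H_2 \neq 2H_2 = \psevl(xH_1)\,\psevl(xH_2)$, using $0<1<2$ in $K \supseteq \QQ$. To complete your proof you should replace the appeal to Example~\ref{ex:std-analysis} by this intrinsic counterexample, i.e.\@ work with $x = \cum 1 \in \galg$ and the normalization $\ev_a(\cum 1) = a$ (equivalently $\cum_{\!0}^a 1 = a$) on which the paper's counterexample tacitly rests.
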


\begin{proof}
  From the definition it is clear that~$\cum\colon \pcwext{\galg} \to \pcwext{\galg}$ is a section
  of~$\der\colon \pcwext{\galg} \to \pcwext{\galg}$, so $(\pcwext{\galg}, \der, \cum)$ is a
  differential Rota-Baxter algebra by Proposition~\ref{prop:pcw}. For showing that it is shifted, it
  remains to prove the compatibility relation~$[S_c, \der] = 0$. Since it is true by hypothesis
  on~$\galg$, we need only check that~$S_c \der \, f H_a = f'(x-c) \, H_{a-c} = \der S_c \, f H_a$.
  
 One checks immediately that the pseudo-evaluation of~$\pcwext{\galg}$ is given by~\eqref{eq:pcw-drb}, using the handy
 relation~$\ev_a = \ev_{\pospart{a}} - \ev_0 + \ev_{\negpart{a}}$. As for every integro-differential algebra, we
 have~$(K[x], \cum) \subseteq (\galg, \cum)$. Since~$K \supseteq \QQ$ is an ordered field, we have~$0 < 1 < 2$ so that
  \begin{equation*}
    \psevl(x H_1 \cdot x H_2) = \psevl(x^2 H_2) = 4 H_2 \neq
  2 H_2= H_1 \cdot 2 H_2 = \psevl(x H_1) \cdot \psevl(x H_2),
  \end{equation*}
  which shows that~$\psevl$ fails to be multiplicative.
\end{proof}

As a special case of~\eqref{eq:pcw-drb}, note that~$\psevl(H_a) = H_a$. This is in agreement with
the fact that the constants are given by~$\ker(\der) = K[H_a \mid a \in K]$; we see~$H_a$ as a
\emph{differential constant} that pseudo-evaluates to itself. Of course one must be careful not to
confuse the pseudo-evaluation with the dis\-tinguished evaluation~$\evl(H_a) = \bar{H}(a)$, which
has image~$K$ rather than~$K[H_a \mid a \in K]$.  This shows also that Proposition~\ref{prop:pcw}
cannot be strengthened to yield an \emph{ordinary} Rota-Baxter algebra~$(\pcwext{\galg},
\cum)$.
Indeed, even when~$\cum$ is injective as in Proposition~\ref{prop:pcw-drb}, the complement of its
image is larger than the ground field~$K$.

\begin{example}
  \label{ex:pcw-smooth}
  Let us show that for~$K = \RR$ and~$\galg = C(\RR)$, the piecewise extension~$\pcwext{C(\RR)}$
  yields the usual Rota-Baxter algebra of \emph{piecewise continuous functions}~$PC(\RR)$, up to a
  quotient.\footnote{Note the difference between $P$ and $\mathcal{P}$ in this example; the latter
    stands for the algebraic construction described above while the former denotes the standard
    notion of piecewise functions in real analysis.} Taking the subalgebra~$\galg = C^\infty(\RR)$,
  we obtain similarly the differential Rota-Baxter algebra $\pcwext{C^\infty(\RR)}$ as the algebraic
  counterpart of the \emph{piecewise smooth functions}~$PC^\infty(\RR)$.

  Let~$f\colon D \to \RR$ be continuous/smooth on an open set~$D \subseteq \RR$. Then we call~$f$
  \emph{piecewise continuous\,/smooth} if~$D$ has finite complement in~$\RR$ and~$f$ has one-sided
  limits at each~$x \in \RR\setminus D$. We call~$x \in \RR\setminus D$ regular if~$f$ extends to a
  continuous/smooth map~$f_x\colon D \cup \{x\} \to \RR$; in that
  case~$f_x(x) = \lim_{\xi \to x} f(\xi)$. For a piecewise function~$f\colon D \to \RR$ we shall
  write~$\tilde{f}\colon \tilde{D} \to \RR$ for its maximal continuous/smooth extension. We
  define~$PC(\RR)$ and~$PC^\infty(\RR)$ as the set of piecewise functions~$f\colon D \to \RR$ with
  $\tilde{f} = f$. They become rings by setting
  \begin{equation*}
    f_1 + f_2 := \widetilde{f_1 \oplus f_2}, \qquad f_1 \cdot f_2 =
    \widetilde{f_1 \odot f_2},
  \end{equation*}
  where~$f_1 \oplus f_2$ and~$f_1 \odot f_2$ denote the pointwise sum and product of
  functions~$f_i\colon D_i \to \RR$ after restricting each to their common domain~$D_1 \cap D_2$. We
  endow~$PC(\RR)$ and its subalgebra $PC^\infty(\RR)$, with the usual Rota-Baxter
  operator~$\cum = \cum_{\!0}^x$; it is clear that this yields Rota-Baxter
  algebras~$(PC^\infty(\RR), \cum) \subset (PC(\RR), \cum)$. Moreover, we can use the standard
  derivation~$\der = \tfrac{d}{dx}$ on the piecewise smooth functions, obtaining a differential
  Rota-Baxter algebra~$(PC^\infty(\RR), \der, \cum)$.

  There is an algebra homomorphism~$\pi\colon \pcwext{C(\RR)} \to PC(\RR)$ that fixes~$C(\RR)$ and
  that sends each $H_a\: (a \in \RR)$ to~$H(x-a) \in PC(\RR)$. Clearly, we have
  also~$\pcwext{C^\infty(\RR)} \to PC^\infty(\RR)$ by restriction. We show that both
  homomorphisms~$\pi$ are surjective: Each~$f \in PC(\RR)$ or $f \in PC^\infty(\RR)$ with regular
  part~$f\colon D \to \RR$ can be written as
  \begin{equation*}
    f(x) = \sum_{i=0}^n f_i(x) \, H(x-x_i) \, H(x_{i+1}-x)
  \end{equation*}
  where~$\RR \setminus D = \{ x_1 < \dots < x_n\}$ and~$f_i\colon \RR \to \RR$ is an arbitrary
  continuous/smooth extension of the \emph{function pieces}~$f |_{(x_i, x_{i+1})}$. Here we
  set~$x_0 = -\infty$ and~$x_{n+1} = +\infty$ with the understanding
  that~$H(x+\infty) = H(\infty-x) = 1$. With this choice of pieces~$f_0, \dots, f_n$ we
  have~$f = \pi\big(\sum_i f_i \, H_{x_{i}} \bar{H}_{x_{i+1}}\big)$, so~$\pi$ is indeed
  surjective. The
  ideals~$$\mathcal{R} := \ker\big(\pi\colon \pcwext{C(\RR)} \to PC(\RR)\big) \quad\text{and}\quad
  \mathcal{R}_\infty := \ker\big(\pi\colon \pcwext{C^\infty(\RR)} \to PC^\infty(\RR)\big)$$
  encode the algebraic relations between continuous\,/smooth functions and Heavisides, for instance
  $b(x) \, H(x-2) = 0$ where~$b(x)$ is any bump function supported in~$[-1,1]$. Hence we obtain the
  quotient representations~$PC(\RR) \cong \pcwext{C(\RR)}/\mathcal{R}$
  and~$PC^\infty(\RR) \cong \pcwext{C^\infty(\RR)}/\mathcal{R}_\infty$.
\end{example}

\begin{example}
  \label{ex:pcw-anl}
  The case of \emph{piecewise real-analytic functions} is essentially different since analytic
  continuation breeds multi-valued functions (or Riemann surfaces) whose proper treatment involves
  sheaf-theoretic methods combined with integro-differential structures. This would lead us too far
  afield but may provide interesting substance for future research.

  For keeping things simple, let us consider the complex algebra~$PC^\omega(\RR)$ of piecewise
  real-analytic functions, in the sense that each function piece~$f_i\colon (x_i, x_{i+1}) \to \CC$
  extends to an entire function.\footnote{This is a very restricted setting since
    even~$\tfrac{1}{x} \not\in PC^\omega(\RR)$. Indeed, keeping~$\tfrac{1}{x}$ creates analytic and
    algebraic complications: multi-valued logarithms and
    quasi-antiderivatives~\cite[Ex.~4.3]{GuoRegensburgerRosenkranz2012}, respectively.} Apart from
  this distinction, the construction of~$PC^\omega(\RR)$ is completely analogous to that
  of~$PC^\infty(\RR)$ in Example~\ref{ex:pcw-smooth}. Taking now the algebra~$\galg = C^\omega(\RR)$
  of global real-analytic functions (real restrictions of entire functions) as coefficient algebra,
  we can apply the construction of Example~\ref{ex:pcw-smooth}. But now the relation
  ideal~$\mathcal{R}$ is trivial because each real-analytic function piece extends \emph{uniquely}
  to a global real analytic function, and we obtain~$PC^\omega(\RR) \cong \pcwext{C^\omega(\RR)}$.
\end{example}

Piecewise defined functions are a major motivation for introducing \emph{distributions}, via
generalized derivatives. In particular, we will no longer view~$\der H_a$ as identically zero but as
a ``Dirac delta''~$\delta_a$, sometimes written~$\delta_a(x) = \delta(x-a)$.  Again we shall pursue
a purely algebraic route to introduce these quantities along with an integro-differential structure.

\section{Construction of the Distribution Module}
\label{sec:distmod}

The basic property of the Dirac distribution~$\delta_a$ concentrated at a source point~$a$ is that
its only nonzero ``value'' is assumed for~$x=a$, in the sense that~$f \delta_a$ vanishes identically
when~$f(a) = 0$. In other words, $f \delta_a$ only depends on~$f(a)$ and not on all of~$f$,
and one has the \emph{sifting property}
\begin{equation}
  \label{eq:extract-value}
  f \delta_a = f(a) \, \delta_a
\end{equation}
for ``extracting'' the source value. This will be the basis of our algebraic
construction.\footnote{Using~$\pcwext{\galg} \, \{\delta_a \mid a \in K\}_1$ instead
  of~$\galg\{H_a \mid a \in K\}_1$ may seem more natural and incremental, but it runs into problems
  with the Leibniz rule: see Remark~\ref{rem:no-diffring}.}

\begin{definition}
\label{def:distmod}
Let $(\galg, \der)$ be a differential algebra over a ring~$K$. We define the \emph{distribution
  module}~$(\distmod{\galg}, \vder)$ as the differential
$\galg$-module~$\galg\{H_a \mid a \in K\}_1/Z$, where~$Z$ denotes the differential $\galg$-submodule
generated by~$\{ f \, \delta_a - \ev_a(f)\, \delta_a \mid f \in \galg, a \in K\}$.
\end{definition}

Recall that~$\galg\{X\}_1$ denotes the module of affine differential polynomials in~$X$. We have
also employed the abbreviation~$\delta_a := H_a'$, which we shall continue to use throughout this
paper (of course derivatives~$\vder\phi$ of~$\phi \in \distmod{\galg}$ are also denoted
by~$\phi'$). The order on~$K$ induces an elimination ranking~$\prec$
on~$\galg \{ H_a \mid a \in K \}$ and thus a Noetherian term order on the
$\galg$-module~$\galg\{H_a \mid a \in K\}_1$. We have~$H_a^{(m)} \prec H_b^{(n)}$ iff~$a < b$ or
otherwise~$a=b$ and~$m < n$. In the sequel we shall always employ this term order on the free
differential module underlying~$\distmod{\galg}$. It is easy to get a kind of Gr{\"o}bner basis
for~$Z$ with respect to this term order. Moreover, the direct decomposition
\begin{equation*}
  \galg\{H_a \mid a \in K\}_1 = \bigoplus_{a \in K} \galg\{H_a\}_1
\end{equation*}
of differential $\galg$-modules induces the direct decomposition $Z = \bigoplus Z_a$, and we
write~$$\zeta = \sum_{a \in K} \zeta_a \quad (\zeta_a \in Z_a)$$ for the corresponding sum
representation of an arbitrary~$\zeta \in Z$. Let us now proceed to the crucial \emph{Presentation
  Lemma} for exhibiting the Gr{\"o}bner basis.

\begin{lemma} \label{lem:distmod-rel}
 The differential~$\galg$-module~$Z$ in Definition~\ref{def:distmod} is generated as an $\galg$-module by
\begin{equation}
  \label{eq:gen-dist-mod}
  \Big\{
  f \, \delta_a^{(k)} - \sum_{i=0}^k \binom{k}{i} (-1)^i
  \ev_a(f^{(i)}) \, \delta_a^{(k-i)} \,\Big|\,
  a \in K, f \in \galg, k \ge 0 \Big\} ,
\end{equation}
which forms a Gr{\"o}bner basis of~$Z$. For every element~$\zeta \in Z$, the leading
coefficient~$f_a$ of each~$\zeta_a$ has the property~$\ev_a(f_a) = 0$. Relative to this Gr{\"o}bner
basis, the elements~$\phi + Z \in \distmod{\galg}$ of the quotient have the canonical
representatives
\begin{equation}
\label{eq:dist-can}
\phi =  f + \sum_{a \in K} \, f_a H_a +  \sum_{a \in K} \sum_{k\geq 0} \lambda_{a, k} \, \delta_a^{(k)}
\quad (f, f_a \in \galg; \: \lambda_{a,k}\in K)
\end{equation}
with only finitely many~$f_a$ and $\lambda_{a,k}$ nonzero.
\end{lemma}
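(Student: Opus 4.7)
My plan is to decompose $Z = \bigoplus_{a \in K} Z_a$ at the outset: each defining generator $f\delta_a - \ev_a(f)\delta_a$ lies in the summand $\galg\{H_a\}_1$, and that summand is preserved under both $\vder$ and $\galg$-multiplication. I then work at a single fixed $a$, writing $G_{f,a,k} := f\delta_a^{(k)} - \sum_{i=0}^k \binom{k}{i}(-1)^i \ev_a(f^{(i)})\delta_a^{(k-i)}$ for the candidate generators. The first task is to show $G_{f,a,k} \in Z_a$ by induction on $k$. The case $k=0$ is a defining generator. For the step I compute $\vder G_{f,a,k} - G_{f',a,k}$ by expanding through Leibniz (using that each $\ev_a(f^{(i)}) \in K$ is annihilated by $\vder$) and reindexing one of the resulting sums; Pascal's identity $\binom{k}{i}+\binom{k}{i-1}=\binom{k+1}{i}$ collapses the expression into exactly $G_{f,a,k+1}$. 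Since both $\vder G_{f,a,k}$ and $G_{f',a,k}$ lie in $Z_a$, so does $G_{f,a,k+1}$.

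To show the $G_{f,a,k}$ generate $Z_a$ as an $\galg$-module (not merely as a differential $\galg$-module), I unwind the definition: as an $\galg$-module $Z_a$ is generated by the iterated derivatives $\vder^k(f\delta_a - \ev_a(f)\delta_a) = \sum_{j=0}^k \binom{k}{j} f^{(j)}\delta_a^{(k-j)} - \ev_a(f)\delta_a^{(k)}$ for $k \ge 0$, $f \in \galg$. A short induction on $k$ rewrites each such element as an $\galg$-linear combination of $G_{\cdot,a,j}$ with $j \le k$: the top summand $(f - \ev_a(f))\delta_a^{(k)}$ is the leading part of $G_{f,a,k}$, while the remaining contributions involve only $\delta_a^{(k-j)}$ with $j \ge 1$ and are handled by the inductive hypothesis after subtracting appropriate multiples of $G_{f^{(j)},a,k-j}$.

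For the Gr\"obner basis and canonical form, I proceed by reduction. An arbitrary $\phi = f + \sum_a f_a H_a + \sum_{a,k} h_{a,k}\delta_a^{(k)}$ in $\galg\{H_a \mid a \in K\}_1$ is reduced by applying $G_{h_{a,k},a,k}$ at the largest occurring $k$ (for each $a$): this replaces $h_{a,k} \in \galg$ by $\ev_a(h_{a,k}) \in K$ and injects strictly lower-order $\delta_a^{(k-i)}$ terms, which are absorbed into the remaining $h_{a,k-1},\dots,h_{a,0}$; iterating downward puts $\phi$ into the shape (\ref{eq:dist-can}). For uniqueness — and simultaneously for the leading-coefficient assertion — observe that $\ker \ev_a$ is an ideal of $\galg$ since $\ev_a$ is a character, and that after collecting the $i=0$ contribution, the $\delta_a^{(k)}$-coefficient of $G_{f,a,k}$ is $f - \ev_a(f) \in \ker\ev_a$. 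This property persists under $\galg$-linear combinations at a fixed $k$-level, while combinations across distinct $k$-levels are dominated by the maximum level, so the leading coefficient $f_a$ of any nonzero $\zeta_a \in Z_a$ lies in $\ker\ev_a$. Finally, if $\phi$ is in canonical form (\ref{eq:dist-can}) and lies in $Z$, then since no generator $G_{f,a,k}$ has any $1$- or $H_a$-component we must have $f=0$ and every $f_a = 0$; for the residual, the leading scalar $\lambda_{a,k}$ lies in $K \cap \ker\ev_a = 0$, forcing every $\lambda_{a,k}=0$.

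The main obstacle is the third paragraph, specifically tracking how the invariant ``leading coefficient in $\ker\ev_a$'' behaves under $\galg$-linear combinations mixing different $k$-levels: one has to verify that the maximal-level leading term is never cancelled by contributions from lower-level generators, which comes down to a careful use of the elimination term order $\prec$ on $\galg\{H_a \mid a \in K\}_1$.
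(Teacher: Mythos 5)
Your first two paragraphs and the reduction argument for the \emph{existence} of the normal form \eqref{eq:dist-can} are essentially correct and parallel the paper's own steps; indeed the single-step recursion $\vder G_{f,a,k}=G_{f,a,k+1}+G_{f',a,k}$ that you establish also disposes of your second paragraph directly (iterate it on $\vder^k\zeta_{f,0}$), so you need not subtract heads term by term there. The genuine gap is the step you yourself flag as the main obstacle: the invariant that the leading coefficient of every nonzero $\zeta_a\in Z_a$ lies in $\ker\ev_a$, on which both the Gr\"obner-basis claim and your uniqueness argument ($\lambda_{a,k}\in K\cap\ker\ev_a=0$) rest. The justification ``combinations across distinct $k$-levels are dominated by the maximum level'' does not hold up, and the difficulty is not the one you name: a lower-level generator, multiplied by anything in $\galg$, never reaches the maximal level under the elimination order, so it cannot cancel the maximal-level term. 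The real danger is cancellation among the maximal-level heads themselves: in $\sum_m g_m\,G_{f_m,a,k}$ the coefficient of $\delta_a^{(k)}$ is $\sum_m g_m\,(f_m-\ev_a(f_m))$, which lies in $\ker\ev_a$ but may vanish with all $g_m\neq 0$ (the paper stresses that $\galg$ may have zero divisors). When it vanishes, the leading term drops to some level $j<k$, and the coefficient there picks up the tail contributions $\pm\binom{k}{k-j}\,g_m\,\ev_a(f_m^{(k-j)})$, i.e.\ terms of the form $g_m\lambda$ with $\lambda\in\nonzero{K}$ possible; these are not individually in $\ker\ev_a$, and nothing in your argument shows that their sum together with the level-$j$ heads still lands there. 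So the confluence/uniqueness half of the lemma is asserted rather than proved.

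This is precisely where the paper works: it invokes Bergman's diamond lemma \cite{Bergman1978}, the only ambiguities being the self-overlaps $f\bar f\,\delta_a^{(k)}$ of \eqref{eq:gen-dist-mod}, whose S-polynomial reduces to zero because it becomes a sum over the triangle $i+j\le k$ of the antisymmetric factor $\ev_a(f^{(i)}\bar f^{(j)}-\bar f^{(i)}f^{(j)})$ against a symmetric trinomial factor. If you prefer to keep your normal-form strategy, a workable repair is to introduce the $K$-linear reduction map $\Theta$ on the pure Dirac part, $c\,\delta_a^{(j)}\mapsto\sum_{i=0}^{j}\binom{j}{i}(-1)^i\ev_a(c^{(i)})\,\delta_a^{(j-i)}$, and verify by a Leibniz/trinomial computation that $\Theta(g\psi)=\Theta\big(g\,\Theta(\psi)\big)$ and that $\Theta$ commutes with $\vder$ there; then $\ker\Theta$ is a differential $\galg$-submodule containing the generators, hence $Z\subseteq\ker\Theta$, the top occurring level of $\Theta(\zeta_a)=0$ yields $\ev_a(f_a)=0$, and since $\Theta$ fixes $K$-linear combinations of Diracs, an element in the form \eqref{eq:dist-can} lying in $Z$ must vanish. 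Some argument of this kind must be supplied; it is the actual content of the lemma.
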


\begin{proof}
We split the proof in several steps.

 \begin{enumerate}
 \item Let us first show that~$Z$ contains the $\galg$-module generated
   by~\eqref{eq:gen-dist-mod}. Since the components~$Z_a$ are independent, we fix an~$a \in K$ and
   abbreviate the corresponding elements of~\eqref{eq:gen-dist-mod} by~$\zeta_{f,k}$. We prove by
   induction on~$k$ that all~$\zeta_{f,k}$ are contained in~$Z$. For~$k=0$ this is clear
   since~$\zeta_{f,0}$ is a (differential) generator of~$Z$. Assume that all~$\zeta_{f,j}$
   with~$j<k$ and arbitrary~$f \in \galg$ are contained in~$Z$; we show that~$\zeta_{f,k} \in Z$ for
   a fixed~$f \in \galg$. Differentiating an arbitrary
   generator~$f \, \delta_a - \ev_a(f) \, \delta_a$ of~$Z$, we obtain
\begin{equation*}
  \vder^k \zeta_{f,0} = f \, \delta_a^{(k)} + \sum_{i=0}^{k-1} \binom{k}{i}
  (\der^{k-i} f) \, \delta_a^{(i)} - \ev_a(f) \, \delta_a^{(k)} \in Z.
\end{equation*}
Eliminating the terms~$f^{(i)} \, \delta_a^{(k-i)}$ yields
\begin{equation*}
  \vder^k \zeta_{f,0} - \sum_{i=0}^{k-1} \binom{k}{i} \zeta_{f^{(k-i)}, i} =
  f \, \delta_a^{(k)}
  + \sum_{j=0}^{k-1} \sum_{i=j}^{k-1} \binom{k}{i}
  \binom{i}{j} (-1)^{i+j} \ev_a(f^{(k-j)}) \, \delta_a^{(j)}
  - \ev_a(f) \, \delta_a^{(k)}
\end{equation*}
after an index transformation. The double sum simplifies to
\begin{align*}
  \quad\qquad \sum_{j=0}^{k-1} & \sum_{i=j}^{k-1} \cdots = 
  \sum_{j=0}^{k-1} (-1)^j \ev_a(f^{(k-j)}) \, \delta_a^{(j)} 
  \sum_{i=j}^{k-1} \binom{k}{i} \binom{i}{j} (-1)^i\\
  &= (-1)^{k+1} \sum_{j=0}^{k-1} \binom{k}{j} (-1)^j \ev_a(\der^{k-j}
  f) \, \delta_a^{(j)}
  = - \sum_{j=1}^k \binom{k}{j} (-1)^j
  \ev_a(f^{(j)}) \, \delta_a^{(k-j)},
\end{align*}
using the fact that the inner sum above evaluates
to~$(-1)^{k+1} \tbinom{k}{j}$. Extending the range of the last sum
to include~$j=0$ incorporates the remaining term so that
\begin{equation*}
  \vder^k \zeta_{f,0} - \sum_{i=0}^{k-1} \binom{k}{i} \zeta_{f^{(k-i)}, i} = f \, \delta_a^{(k)} - \sum_{j=0}^k \binom{k}{j} (-1)^j
  \ev_a(f^{(j)}) \, \delta_a^{(k-j)} = \zeta_{f,k},
\end{equation*}
which shows that~$\zeta_{f,k} \in Z$ since all~$\zeta_{\der^{k-i} f, i} \in
Z$ by the induction hypothesis.

\item For establishing the converse inclusion that~$Z$ is contained in the $\galg$-module generated by~\eqref{eq:gen-dist-mod}, it suffices to show that all the derivatives~$\vder^k \zeta_{f,0}$ are $\galg$-linear combination of the~$\zeta_{f,j}$. But this is clear from the last identity of the previous item.

\item We proceed now to the statement about the leading coefficients.
To this end, we rewrite the module generators as
\begin{align*}
 \zeta_{f,k}= \, (f \, - \ev_a f) \, \delta_a^{(k)} - \sum_{i=1}^k \binom{k}{i} (-1)^i
  \ev_a(f^{(j)}) \, \delta_a^{(k-i)},
\end{align*}
from which the claim is evident.

\item Next we must show that~\eqref{eq:gen-dist-mod} forms a Gr{\"o}bner bases for the $\galg$-module~$Z$. This involves a slight variation of the usual setting of Gr{\"o}bner bases for commutative polynomials~\cite{Buchberger2006} since we have infinitely many indeterminates and the coefficient ring~$\galg$ may have zero divisors (it is certainly not a field). Since we need only the linear fragment of the polynomial ring, we may use the approach of~\cite[\S9.5a]{Bergman1978}, which also allows for infinitely many generators. In the notation of~\cite[\S9.5a]{Bergman1978}, we set~$k = K$ and $R = \galg$ with trivial presentation (every element of~$\galg$ is a generator, and there are no relations) and the module~$M=Z$ with generators~$\smash{\delta_a^{(k)}}$ and relations~\eqref{eq:gen-dist-mod}. The only S-polynomials~$\sigma$ arise from the self-overlaps of~\eqref{eq:gen-dist-mod}, namely $f \! \bar{f} \, \smash{\delta_a^{(k)}}$, and this yields
\begin{align*}
  \quad\qquad \sigma &= \sum_{i=0}^k \binom{k}{i} (-1)^i \Big( \ev_a(f^{(i)}) \, \bar{f} - \ev_a(\bar{f}^{(i)}) \, f \, \Big) \, \delta_a^{(k-i)}\\
  & \to \sum_{i=0}^k \sum_{j=0}^{k-i} \binom{k}{i} \binom{k-i}{j} (-1)^{i+j} \ev_a(f^{(i)} \, \bar{f}^{(j)} - \bar{f}^{(i)} \,f^{(j)}) \, \delta_a^{(k-i-j)} = \sum_{i+j \le k} e_{ij} \, \eta_{ij},
\end{align*}
which vanishes since the summation is over a triangle~$i+j \le k$, symmetric with respect to~$i \leftrightarrow j$, while the evaluation term~$e_{ij} = \ev_a(\ldots)$ is antisymmetric and the trinomial term~$\eta_{ij} = k! \smash{\bigm/} i! j! (k-i-j)! \, (-1)^{i+j} \, \delta_a^{(k-i-j)}$ symmetric.

\item The analog of the Diamond Lemma in~\cite[\S9.5a]{Bergman1978} ensures that the normal forms
  of~\eqref{eq:gen-dist-mod} are canonical representatives of the congruence
  classes~$\phi + Z \in \distmod{\galg}$. Hence it suffices to characterize the normal forms of an
  arbitrary (noncanonical) representative~$\phi$. Clearly, every such~$\phi$ is reducible as long as
  it contains any~$\delta_a^{(k)}$ with a coefficient in~$\galg \setminus K$; hence we can
  achieve~\eqref{eq:dist-can}, which is clearly irreducible with respect to~\eqref{eq:gen-dist-mod}.
\end{enumerate}
This completes the proof of the Presentation Lemma.
\end{proof}

We identify the Heavisides~$H_a \in \distmod{\galg}$ with the
corresponding~$H_a \in \pcwext{\galg}$. As a consequence, we
have~$\pcwext{\galg} \subset \distmod{\galg}$ as plain $\galg$-modules\footnote{The total order
  presupposed in the definition of~$\pcwext{\galg}$ is irrelevant for the identification of
  modules: It is only needed for the ring structure of~$\pcwext{\galg}$, which is momentarily
  ignored but incorporated later (Remark~\ref{rem:no-diffring}).} but not as differential
$\galg$-modules: Indeed, the derivation~$\der\colon \pcwext{\galg} \to \pcwext{\galg}$ just
annihilates the Heavisides, $\der H_a = 0$, whereas
$\vder\colon \distmod{\galg} \to \distmod{\galg}$ sends them to~$\vder H_a = \delta_a$. The
situation for the \emph{Rota-Baxter structure} is very different---in fact, we shall see
that~$\pcwext{\galg} \subset \distmod{\galg}$ as Rota-Baxter $\galg$-modules. To this end we
define~$\vcum\colon\distmod{\galg} \to \distmod{\galg}$ as an extension of
$\cum\colon \pcwext{\galg} \to \pcwext{\galg}$ via the recursion
\begin{equation}
  \label{eq:rb-dist}  
   \vcum f \delta_a^{(k)} =
   \begin{cases}
    \ev_a(f) \, \vcum \delta_a & \text{for $k=0$,}\\
    f\delta_a^{(k-1)}-\vcum f'\delta_a^{(k-1)} & \text{for $k > 0$,}
    \end{cases}
\end{equation}
where~$\ev_a$ denotes the evaluation in~$\galg$ and the integral in the base case is given in terms of the (rising or falling) Heaviside function via
\begin{equation}
  \label{eq:dirac-ader}
  \vcum \delta_a = H_a - \bar{H}(a) = H(a) - \bar{H}_a,
\end{equation}
which may also be written symmetrically as~$\vcum \delta_a = H(a) \, H_a - \bar{H}(a) \,
\bar{H}_a$. Setting~$f=1$ in~\eqref{eq:rb-dist}, we obtain the higher Dirac antiderivatives~$\vcum
\smash{\delta_a^{(k)}} = \smash{\delta_a^{(k-1)}}$ for~$k > 0$. Hence the \emph{induced evaluation} $\mevl = 1_{\distmod{\galg}} - \vcum \vder$ of the module generators is given by
\begin{equation}
  \label{eq:mod-eval}
  \mevl(H_a)= \bar{H}(a)  \qquad\text{and}\qquad
  \mevl(\delta_a^{(k)})= 0 \quad (k \geq 0),
\end{equation}
which---unlike in the piecewise extension---do go to the ground field~$K$. This should be contrasted
to the pseudo-evaluation~$\psevl(H_a) = H_a$ we introduced earlier (after
Proposition~\ref{prop:pcw-drb}). We shall come back
to~$\mevl\colon \distmod{\galg} \to \distmod{\galg}$ in due course (see
Lemmas~\ref{lem:char-intdiff-mod} and~\ref{prop:intdiff-mod}).

\begin{remark}
  \label{rem:alt-der-distmod}
  The definition of the Rota-Baxter operator~$\vcum\colon \distmod{\galg} \to \distmod{\galg}$ 
  in~\eqref{eq:pcw-def1} and~\eqref{eq:rb-dist}--\eqref{eq:dirac-ader} may be rephrased more
  economically by joining~\eqref{eq:pcw-def1} with the single formula
  \begin{equation}
    \label{eq:alt-der-distmod}
    \vcum f H_a^{(k+1)} = f H_a^{(k)} - \vcum f' H_a^{(k)}  \quad (k \in \NN) .
  \end{equation}
  While this is evident for~$k>0$, it requires a small calculation to confirm in the case~$k=0$. The
  main point is to use~\eqref{eq:pcw-def1} in conjunction with the
  relation~$\ev_a = \ev_{\pospart{a}} - \ev_0 + \ev_{\negpart{a}}$ already used in the proof of
  Proposition~\ref{prop:pcw-drb} and the simple fact
  that~$f(a^+) = f(a) \, H(a) + f(0) \, \bar{H}(a)$.  We have chosen the split
  definition~\eqref{eq:rb-dist}--\eqref{eq:dirac-ader} above since we find it more intuitive.
\end{remark}


\begin{wrapfigure}[6]{R}{0.35\textwidth}
\centering\vspace{-1.95em}

$\xymatrix @M=0.6pc @=2.25pc @C=5pc {\pcwext{\galg} \ar@{^(->}^{\iota}[r] & \distmod{\galg}\\
\galg \ar^{u_{\pcwext{}}}[u] \ar_{u_{\distmod{}}}[ur]}$
\end{wrapfigure}

Our main result states that the distribution module $\distmod{\galg}$ is an extension of the ground
algebra~$\galg$ that contains the piecewise extension~$\pcwext{\galg}$ qua Rota-Baxter module; see
the figure nearby, where~$\iota$ is the embedding of Rota-Baxter $\galg$-modules
while~$u_{\pcwext{}}$ and~$u_{\distmod{}}$ are the structure maps of the~$\galg$-modules
$\pcwext{\galg}$ and $\distmod{\galg}$, respectively.

\begin{theorem}
  \label{thm:dist-drbm}
  Let~$(\galg,\der, \cum)$ be an ordinary shifted integro-differential algebra. Then the distribution module~$(\distmod{\galg}, \vder, \vcum)$ is a differential Rota-Baxter module over~$\galg$ that extends $(\pcwext{\galg}, \cum)$ as a Rota-Baxter module.
\end{theorem}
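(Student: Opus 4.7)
The plan is to verify, in sequence, the four pieces packaged in the statement: (a) that $\vcum$ is well defined on $\distmod{\galg}$; (b) the section axiom $\vder\vcum = \id$; (c) the Rota-Baxter module axiom $\cum(f)\cdot\vcum(\phi) = \vcum(f\,\vcum\phi) + \vcum(\cum(f)\cdot\phi)$; and (d) that $\vcum$ restricts to $\cum$ on the embedded submodule $\pcwext{\galg}$.

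For (a) and (d) I would rely on the Presentation Lemma: every element of $\distmod{\galg}$ has a unique canonical representative of the form~\eqref{eq:dist-can}, on which I prescribe $\vcum$ additively by the piecewise Rota-Baxter operator of Proposition~\ref{prop:pcw} on the piecewise component $f + \sum_a f_a H_a$, by $\lambda_{a,0}(H_a - \bar{H}(a))$ on each $\lambda_{a,0}\delta_a$, and by $\lambda_{a,k}\delta_a^{(k-1)}$ on each $\lambda_{a,k}\delta_a^{(k)}$ with $k \geq 1$. A short induction on $k$, using the Gr\"obner basis of Lemma~\ref{lem:distmod-rel} to reduce $f\delta_a^{(k)}$ to canonical form, then shows that the recursion~\eqref{eq:rb-dist}--\eqref{eq:dirac-ader} agrees with this prescription on arbitrary (non-canonical) generators; (d) is then immediate since piecewise canonical forms carry no $\delta$-part. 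The section axiom (b) is checked on canonical generators: it is trivial for $1 \in \galg$ and for $\delta_a^{(k)}$ with $k \geq 1$; on $\delta_a$ one has $\vder(H_a - \bar{H}(a)) = \delta_a$; and the subtle case $\phi = H_a$ requires the Leibniz rule in $\distmod{\galg}$, where the extra term $(\cum 1 - \ev_a(\cum 1))\delta_a$ produced by $\vder H_a = \delta_a$ vanishes by the sifting relation $f\delta_a = \ev_a(f)\delta_a$.

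For (c), both sides are $K$-linear in $\phi$, so it is enough to test the axiom on canonical generators. If $\phi \in \pcwext{\galg}$, the products $f\,\vcum(\phi)$ and $\cum(f)\cdot\phi$ stay inside $\pcwext{\galg}$, so every $\vcum$ becomes $\cum$ and the identity is inherited from Proposition~\ref{prop:pcw}. For $\phi = \delta_a$ one unfolds both sides using~\eqref{eq:pcw-def1}, \eqref{eq:dirac-ader} and the sifting identity; the Heaviside and boundary terms combine to give $\cum(f)(H_a - \bar{H}(a))$ on both sides. For $\phi = \delta_a^{(k)}$ with $k \geq 1$, expanding $\cum(f)\delta_a^{(k-1)}$, $f\delta_a^{(k-1)}$ and $\cum(f)\delta_a^{(k)}$ in canonical form via Lemma~\ref{lem:distmod-rel} and applying $\vcum$ turns both sides into finite binomial sums; the $(H_a - \bar{H}(a))$-terms arising whenever $\vcum$ hits a $\delta_a$ cancel pairwise between the two right-hand summands, and the matching of the remaining $\delta_a^{(k-2-i)}$-coefficients reduces to Pascal's identity $\binom{k-1}{i} - \binom{k}{i+1} = -\binom{k-1}{i+1}$ combined with $(\cum f)^{(i)} = f^{(i-1)}$ for $i \geq 1$.

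The principal obstacle is this last case of (c): both sides expand into double sums that mix the two ``levels'' of $\distmod{\galg}$ (positive-order delta-derivatives and Heaviside expressions in $\pcwext{\galg}$), and one must track carefully the boundary contributions coming from $\vcum(\delta_a) = H_a - \bar{H}(a)$ so that all level-mixing terms cancel consistently.
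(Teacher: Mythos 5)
Your proposal is correct, but it is organized rather differently from the paper's proof, essentially by exchanging the roles of definition and verification. The paper takes the recursion \eqref{eq:rb-dist}--\eqref{eq:dirac-ader} as the definition of~$\vcum$ on arbitrary generators~$f\delta_a^{(k)}$, so it must prove well-definedness by showing~$\vcum Z \subseteq Z$, which it does by induction on~$k$ over the Gr\"obner-basis elements~$\zeta_{f,k}$ of Lemma~\ref{lem:distmod-rel}; it then proves the module Rota-Baxter axiom for~$\phi = g\,\delta_a^{(k)}$ with \emph{arbitrary} coefficient~$g \in \galg$ by induction on~$k$, where one application of~\eqref{eq:rb-dist} collapses both sides onto the induction hypothesis with~$g$ replaced by~$g'$. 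You instead define~$\vcum$ on the canonical $K$-basis of~\eqref{eq:dist-can} (so well-definedness is automatic), recover~\eqref{eq:rb-dist} as a derived identity by reduction to canonical form, and verify the Rota-Baxter axiom on basis elements~$\delta_a^{(k)}$ by explicit expansion. I checked your bookkeeping and it is exactly right: the two~$\bigl(H_a - \bar{H}(a)\bigr)$-contributions~$(-1)^{k-1}\ev_a(f^{(k-1)})$ and~$(-1)^{k}\ev_a(f^{(k-1)})$ cancel between the two summands on the right, the~$\delta_a^{(k-1)}$-coefficient~$\ev_a(\cum f)$ matches the left-hand side, and the remaining coefficients match precisely via~$\binom{k-1}{i} - \binom{k}{i+1} = -\binom{k-1}{i+1}$ together with~$(\cum f)' = f$; likewise your base case~$\phi = \delta_a$ and your observation that for~$\phi \in \pcwext{\galg}$ the module axiom is an instance of the ring axiom of Proposition~\ref{prop:pcw} (because~$f, \cum f \in \galg \subset \pcwext{\galg}$) are sound. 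The paper's reduction~$g \mapsto g'$ buys a two-line induction step; your reduction to coefficient~$1$ plus binomial bookkeeping buys a fully explicit formula for~$\vcum$ on canonical forms, at the price of heavier computation--both are legitimate.

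One small patch is needed in your step (b): $K$-linearity only reduces the section axiom to basis elements of the form~$f H_a$ with~$f$ ranging over a $K$-basis of~$\galg$, not merely to~$H_a$, and since~$\vder\vcum$ is not $\galg$-linear you cannot pull~$f$ out. Fortunately your sifting argument works verbatim in general: by the second form of~\eqref{eq:pcw-def1} and the Leibniz rule, $\vder\vcum\, f H_a = f H_a + (\cum_{\!a} f)\,\delta_a$, and the extra term vanishes because~$(\cum_{\!a} f)\,\delta_a = \ev_a(\cum_{\!a} f)\,\delta_a = 0$; this is exactly the computation~\eqref{eq:sec-ax} in the paper. With that adjustment (and with~$\phi = f \in \galg$ handled by the section axiom of~$\galg$ itself), your plan goes through completely.
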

 
\begin{proof}
  It suffices to prove the following statements:
 \begin{enumerate}
 \item\label{it:rb-wdef} \emph{The map~$\vcum\colon \distmod{\galg} \to \distmod{\galg}$ is well-defined.} For this we have to show that~$\vcum Z\subseteq Z $, which we do by the aid (and with the notation) of Lemma~\ref{lem:distmod-rel}. So for~$a \in K$ fixed, we prove~$\vcum \zeta_{f,k} \in Z$ for all~$f \in \galg$ and~$k \ge 0$. Using induction on~$k$, the base case~$k=0$ follows immediately from~\eqref{eq:rb-dist}. For the induction step it suffices to prove that $\vcum \zeta_{f,k+1} = \zeta_{f,k} - \vcum \zeta_{f',k}$ for all $f \in \galg$. Using the generators~\eqref{eq:gen-dist-mod} we have
\begin{equation*}
  \vcum \zeta_{f,k+1} = \vcum f \, \delta_a^{(k+1)} - \sum_{i=0}^{k+1} \binom{k+1}{i} (-1)^i
  \ev_a(f^{(i)}) \, \vcum \delta_a^{(k-i+1)},
\end{equation*} 
which simplifies by~\eqref{eq:rb-dist} and the binomial recursion~$\binom{k+1}{i}= \binom{k}{i}+ \binom{k}{i-1}$ to
\begin{equation*}
  f \, \delta_a^{(k)} - \sum_{i=0}^{k} \binom{k}{i} (-1)^i \ev_a(f^{(i)}) \, \delta_a^{(k-i)}
   {} - \bigg( \vcum f' \, \delta_a^{(k)} - \sum_{i=0}^{k} \binom{k}{i} (-1)^i \ev_a(f'^{\,(i)}) \, \vcum \delta_a^{(k-i)} \bigg) 
  = \zeta_{f,k} - \vcum \zeta_{f',k}
\end{equation*}
and thus completes the induction.
\item \emph{The map~$\vcum\colon \distmod{\galg} \to \distmod{\galg}$ is a Rota-Baxter operator.} Hence we must prove, for any~$f,g \in \galg$ and~$a \in K$ and~$k \ge 0$, the Rota-Baxter axiom
\begin{equation}
\label{eq:rb-mod}
\cum f \cdot \vcum g \, \delta_a^{(k)} = \vcum f \, \vcum g \, \delta_a^{(k)} + \vcum (\cum f)  g \, \delta_a^{(k)}.
\end{equation}
We fix~$a \in K$ and use induction on $k$ to prove~\eqref{eq:rb-mod} for all $f, g \in \galg$. In the base case, exploring definition~\eqref{eq:rb-dist} reveals that~$\ev_a(g)$ factors on both sides of~\eqref{eq:rb-mod}; hence it suffices to take~$g=1$. The left-hand side is then~$\cum f \cdot \vcum \delta_a$ while we obtain
\begin{align*}
\big( H(a) \, \cum f H_a - \bar{H}(a) \, \cum f \bar{H}_a \big) + \cum_{\!0\,}^a f \cdot \vcum \delta_a
\end{align*}
for the right-hand side. Using the definition~\eqref{eq:pcw-def1}, \eqref{eq:pcw-def2} of the
Rota-Baxter operator on the piecewise extension~$\pcwext{\galg} \subset \distmod{\galg}$ and
properties of the Heaviside operator, the first parenthesized term
becomes~$\cum_{\!a\,} f \cdot \vcum \delta_a$ and then combines with the remaining term
to~$\cum f \cdot \vcum \delta_a$; this completes the base case of the induction. Assume now
that~\eqref{eq:rb-mod} holds for~$k$; we show that it holds for~$k+1$. Using the
definition~\eqref{eq:rb-dist} once, the left-hand side
is~$\cum f \cdot (g \delta_a^{(k)}- \vcum g' \, \delta_a^{(k)})$. On the right-hand side we
use~\eqref{eq:rb-dist} on each summand to get
\begin{align*}
\vcum  f g & \, \delta_a^{(k)} - \vcum f \, \vcum g' \, \delta_a^{(k)} + (\cum f) g \, \delta_a^{(k)} - \vcum fg \, \delta_a^{(k)} - \vcum (\cum f) g' \, \delta_a^{(k)}\\
& = (\cum f) g \, \delta_a^{(k)} - \vcum f \, \vcum g' \, \delta_a^{(k)} - \vcum (\cum f) g' \, \delta_a^{(k)} .
\end{align*}
Canceling the first terms on both sides, we end up with~\eqref{eq:rb-mod} where~$g$ is replaced by~$g'$, and this holds by the induction hypothesis.

\item \emph{The map~$\vder\colon \distmod{\galg} \to \distmod{\galg}$ is a well-defined derivation.}
  In fact, it suffices to prove well-definedness since the derivation property then follows
  immediately from the definition of~$\distmod{\galg}$ as a quotient of a differential module. Hence
  we must prove~$\der Z \subset Z$, but this follows directly
  from~$\vder \zeta_{f,k} = \zeta_{f,k+1} + \zeta_{f',k}$, obtained by differentiating the
  identity of Item~\eqref{it:rb-wdef}.

\item \emph{The Rota-Baxter operator~$\vcum$ is a section of the derivation~$\vder$.} We start by
  showing that $\vder \vcum \, f H_a = f H_a$ holds for all $f \in \galg$. Using
  definition~\eqref{eq:pcw-def1} for the Rota-Baxter operator on~$\pcwext{\galg}$ and the Leibniz
  rule together with the basic relation~$f \delta_a = \ev_a(f) \, \delta_a$ of~$Z$ yields
\begin{equation}
\label{eq:sec-ax}
\vder \vcum \, f H_a = f H_a + \big( \cum_{\!\pospart{a}}^a f \big) \, \delta_a  + \big( \cum_{\!\negpart{a}}^0 f \big) \, \delta_a
\end{equation}
whose last two terms combine to~$0+0$ in the case~$a \ge 0$ and again to~$\cum_0^a f + \cum_a^0 f = 0$ in the case~$a \le 0$. Hence the right-hand side of~\eqref{eq:sec-ax} is indeed~$f H_a$. Now for elements of the form~$f\delta^{(k)}_a$ we use induction on~$k$. In the base case we have
$$\vder \vcum \, f \delta_a = \ev_a(f) \, \vder \big(H_a - H(a) \big) = \ev_a(f) \, \delta_a = f \delta_a,$$
where the last step uses again the basic relation of~$Z$. Now assume~$\vder \vcum \, f \delta_a^{(k)} = f \delta_a^{(k)}$ for a fixed~$k$. Then we have
$$\vder \vcum \, f \delta_a^{(k+1)} = \vder (f \delta_a^{(k)}) - \vder \vcum \, f' \delta_a^{(k)} = f \delta_a^{(k+1)},$$
where the last step uses the Leibniz rule for~$\vder$ and the induction hypothesis. This completes the proof of the section axiom for~$\vcum$.
\end{enumerate}\vspace*{-\baselineskip}
\end{proof}

Before analyzing some further properties of~$\distmod{\galg}$, let us digress briefly for addressing
an important ``\emph{design question}'' that has come up repeatedly in the course of building up the
algebraic structure of Heaviside functions and Dirac distributions.

\begin{remark}
  \label{rem:no-diffring}
  It sounds tempting to introduce distributions as a \emph{differential ring extension}
  of~$\pcwext{\galg}$. However, the famous negative result~\cite{Schwartz1954} serves as a warning
  signal that we should not be overly optimistic in that respect. In the algebraic setup, we see
  that things are in a sense worse---we cannot even expect a Leibniz rule that involves Heavisides:
  Since $H_a^2 = H_a$ in~$\pcwext{\galg}$, differentiation would
  yield~$2 H_a \, \delta_a = \delta_a$ as a new relation. Hence we would need~$\distmod{\galg}$ to
  be a module over~$\pcwext{\galg}$, though it would not be a differential module since the
  derivation does not restrict to a map~$\vder\colon \pcwext{\galg} \to \pcwext{\galg}$.
  Furthermore, we would now expand the relations~$Z$ of Definition~\ref{def:distmod} to
  include~$F \, \delta_a - \ev_a(F) \, \delta_a$ for all~$F \in \pcwext{\galg}$ and not just
  for~$F \in \galg$; in conjunction with the new relation this forces on us the symmetric convention
  for the Heaviside operator. We have discarded the latter (see before Proposition~\ref{prop:pcw})
  solely for ensuring multiplicative evaluations on~$\pcwext{\galg}$, so let us momentarily assume
  the symmetric convention. At any rate, the new relation~$2 H_a \, \delta_a = \delta_a$ implies
  that
  \begin{equation*}
    \delta_a = 2 H_a \, \delta_a = 2 H_a \, (2 H_a \, \delta_a) = (4 H_a^2) \, \delta_a = 4 H_a \,
    \delta_a = 2 \delta_a,
  \end{equation*}
  which means~$\delta_a = 0$ and hence~$\distmod{\galg} = \pcwext{\galg}$.

  It is now clear why our construction of~$\distmod{\galg}$ was based on a free differential module
  over~$\galg$ rather than some module over~$\pcwext{\galg}$. On the other hand, it is clear
  that~$\pcwext{\galg} \subset \distmod{\galg}$, and we may export the \emph{product structure of
    the piecewise extension}~$\pcwext{\galg}$ to the distribution module~$\distmod{\galg}$. Hence we
  may say~$H_a^2 = H_a \in \distmod{\galg}$ but we are barred from differentiating this relation
  since~$\distmod{\galg}$ is a differential module over~$\galg$ and not over~$\pcwext{\galg}$.\qed
\end{remark}

In Theorem~\ref{thm:dist-drbm} we use the rather strong assumption that the ground
algebra~$(\galg, \der, \cum)$ is an ordinary integro-differential algebra since this is what we need
in our applications. This has the nice consequence that the distribution module itself has similar
properties. However, for a general differential Rota-Baxter module one must distinguish between the
\emph{strong Rota-Baxter axiom}~\eqref{eq:str-rb-axiom} for coefficients and for module elements
(whether one may pull out constants of either kind from the integral). In the sequel, we shall
write~$\mevl := 1_M - \vcum \vder$ for the induced (pseudo)evaluation in an arbitrary differential
Rota-Baxter module~$(M, \vder, \vcum)$.

\def\myequiv{$\quad\Leftrightarrow\quad$}

\begin{lemma}
  \label{lem:char-intdiff-mod}
  Let~$(M, \vder, \vcum)$ be a differential Rota-Baxter module over the integro-differential algebra~$(\galg, \der, \cum)$. Then we have the following equivalences (where~$f, c \in \galg$ and~$\phi, \gamma \in M$):
  \begin{enumerate}
  \item $\vcum c \phi = c \, (\vcum \phi) \quad (\text{for all } c \in \ker{\der})$ \myequiv $\vcum f \phi = f \, \vcum \phi - \vcum f' \, \vcum \phi$
  \item $\vcum f \gamma = (\cum f) \, \gamma \quad (\text{for all } \gamma \in \ker{\vder})$ \myequiv $\vcum f \phi = (\cum f) \, \phi - \vcum (\cum f) \, \phi'$
  \item $\mevl(f \phi) = \evl(f) \, \mevl(\phi)$ \myequiv (1a) \textsl{\&} (2a) \myequiv (1b) \textsl{\&} (2b)
  \end{enumerate}
  If~$M$ is ordinary, property~(1a) and hence~(1b) is automatic; if~$\galg$ is ordinary, the same holds for properties~(2a) and~(2b).
\end{lemma}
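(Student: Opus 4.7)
The plan is to prove three layers of implications in turn: the equivalences (1a)$\Leftrightarrow$(1b) and (2a)$\Leftrightarrow$(2b); the characterization (3) of multiplicativity of $\mevl$ via the conjunction of the strong axioms; and finally the two automaticity statements. The (b)$\Rightarrow$(a) directions of the first layer are immediate specializations: take $f=c$ with $c'=0$ in (1b) to kill the $\vcum f'\vcum\phi$ term, and take $\phi=\gamma$ with $\vder\gamma=0$ in (2b) to kill the $\vcum(\cum f)\phi'$ term.

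For (1a)$\Rightarrow$(1b) the key move is to use the integro-differential decomposition $f=\cum f'+\evl(f)$, which is valid because $\evl=1-\cum\der$ is a projector onto $\ker\der$ along $\im\cum$. Writing $f\vcum\phi=(\cum f')\vcum\phi+\evl(f)\vcum\phi$, applying the weak module Rota-Baxter axiom to the first summand, and invoking (1a) with $c=\evl(f)\in\ker\der$ to rewrite $\evl(f)\vcum\phi$ as $\vcum\evl(f)\phi$, the three resulting $\vcum$-terms reassemble into $\vcum f'\vcum\phi+\vcum f\phi$. The argument for (2a)$\Rightarrow$(2b) is perfectly symmetric: use the module-side decomposition $\phi=\vcum\phi'+\mevl(\phi)$ with $\mevl(\phi)\in\ker\vder$, apply the weak axiom to the $\vcum\phi'$ piece, and absorb the $\mevl(\phi)$ piece via (2a).

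The heart of the proof is (3). For the direction (1b) and (2b)$\Rightarrow$multiplicativity, I would expand $\mevl(f\phi)=f\phi-\vcum f'\phi-\vcum f\phi'$ via the Leibniz rule, then rewrite $\vcum f'\phi$ using (2b) and $\vcum f\phi'$ using (1b). This produces four $\vcum$-terms; the two crossterms $\vcum(\cum f')\phi'$ and $\vcum f'\vcum\phi'$ recombine via one more application of the weak module axiom into $(\cum f')\vcum\phi'$, and the remainder factors cleanly as $(f-\cum f')(\phi-\vcum\phi')=\evl(f)\mevl(\phi)$. For the converse direction, I would specialize multiplicativity to $c\in\ker\der$ (where $\evl(c)=c$) to obtain $\vcum c\phi'=c\vcum\phi'$ for every $\phi\in M$; since every $\psi\in M$ arises as $\phi'=\vder\vcum\psi$, this yields (1a) on all of $M$. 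A symmetric specialization to $\gamma\in\ker\vder$ delivers (2a), using that every $g\in\galg$ arises as $f'=\der\cum g$.

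I expect the main obstacle to be the bookkeeping in the multiplicativity calculation: one must track four $\vcum$-terms at once and recognize precisely when the weak module axiom is poised to cancel the crossterms. The automaticity statements at the end are comparatively easy: when the appropriate kernel ($\ker\der$ or $\ker\vder$) collapses to $K$, the corresponding (a)-axiom reduces to the $K$-linearity of $\vcum$ already built into the definition of a Rota-Baxter operator.
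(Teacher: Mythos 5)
Your proof of the three equivalences is correct and follows essentially the paper's own route: the decomposition $f=\cum f'+\evl(f)$ is exactly the paper's application of~(1a) to $c=f-\cum f'$, your treatment of~(2) is the symmetric module-side analogue the paper leaves implicit, the forward half of~(3) is the same specialization to $f=c\in\ker{\der}$ resp.\ $\phi=\gamma\in\ker{\vder}$ combined with surjectivity of $\vder$ and $\der$, and your four-term computation for (1b)\,\&\,(2b)~$\Rightarrow$~multiplicativity is the paper's identity $(\cum f')(\vcum\phi')=\vcum(\cum f')\phi'+\vcum f'\vcum\phi'$ merely read in the opposite direction. Up to and including item~(3), this is the same argument.

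The weak spot is the closing automaticity sentence. For~(1a) your reasoning is fine: if $\ker{\der}=K$ then (1a) is literally the $K$-linearity of $\vcum$ --- but note that this is the hypothesis ``$\galg$ ordinary'', whereas the lemma as stated attaches~(1) to ``$M$ ordinary'' and~(2) to ``$\galg$ ordinary'', so your pairing silently differs from the statement you are proving. More importantly, (2a) does \emph{not} reduce to $K$-linearity when $\ker{\vder}=K$: writing $\ker{\vder}=K\gamma_0$, condition~(2a) amounts to $\vcum f\gamma_0=(\cum f)\,\gamma_0$ for all $f$, a compatibility of $\vcum$ with $\cum$ on the module constant, and the differential Rota-Baxter module axioms do not force it. For instance, take $\galg=K[x]$ (or $C^\infty(\RR)$) with its standard structure and $M=\galg$, $\vder=\der$, $\vcum:=\cum_c=(1-\ev_c)\cum$ for some $c\neq 0$: one checks easily (both sides have equal derivative and vanish under $\ev_c$) that this is an ordinary differential Rota-Baxter module over the ordinary integro-differential algebra $(\galg,\der,\cum)$, yet $\vcum(f\cdot 1)=\cum_c f\neq \cum f$ already for $f=1$, so (2a) fails --- and so does~(3), since here $\mevl=\ev_c\neq\evl$. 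Thus no ``it is just $K$-linearity'' argument can establish the (2)-half; the paper gives no proof of that sentence either, and in the intended application (Proposition~\ref{prop:intdiff-mod}) condition~(2a) holds for the distribution module because $\vcum$ is constructed as an extension of $\cum$, not from ordinariness alone. You should either restrict the automaticity claim to~(1a)/(1b) under ``$\galg$ ordinary'' or supply the extension property as an extra hypothesis.
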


\begin{proof}
  The implications are similar to the corresponding ones given
  in~\cite[Thm.~2.5]{GuoRegensburgerRosenkranz2012} for noncommutative rings, provided one splits
  the properties of the ring into its left-hand and right-hand versions.
  
  Let us start with~(1). The implication from right to left is obvious, so assume the homogeneity condition~(1a) for~$c \in \ker{\der}$. Then we have
  \begin{align*}
    f \, \vcum \phi = (f - \cum f') \, \vcum \phi + (\cum f')(\vcum \phi) = \vcum f\phi - \vcum (\cum f') \phi + (\cum f')(\vcum \phi),
  \end{align*}
  where we have used the homogeneity condition for~$c = f - \cum f' \in \ker{\der}$. By the (plain) Rota-Baxter axiom the last term above is~$(\cum f')(\vcum \phi) = \vcum (\cum f') \phi + \vcum f' \vcum \phi$, hence one immediately obtains~(1b). The proof of the equivalence~(2a) $\Leftrightarrow$ (2b) is completely analogous. Turning to~(3), let us first assume the multiplicativity condition~$\mevl(f \phi) = \evl(f) \, \mevl(\phi)$. Specializing to~$f = c \in \ker{\der}$ yields~$\vcum c\phi' = c \, \vcum \phi'$, which is~(1a) since~$\vder$ is surjective; likewise specializing to~$\phi = \gamma \in \ker{\vder}$ gives~$\vcum f' \gamma = (\cum f') \, \gamma$, which is~(2a) since~$\der$ is surjective as well. For the converse statement, we may assume~(1b) and~(2b) to prove the multiplicativity condition for the evaluations. From the plain Rota-Baxter axiom we have
  \begin{align*}
    (\cum f')(\vcum \phi') = \vcum (\cum f') \, \phi' + \vcum f' \vcum \phi' = \bigg( (\cum f') \, \phi - \vcum f' \phi \bigg) 
    + \bigg( f \, \vcum \phi' - \vcum f \phi' \bigg),
  \end{align*}
  where the first and the second parenthesized terms come from applying~(2b) and~(1b), respectively. Subtracting~$f \phi$ from both sides of the above identity and rearranging, one obtains exactly~$\mevl(f \phi) = \evl(f) \, \mevl(\phi)$.
\end{proof}

If~$(M, \vder, \vcum)$ satisfies the multiplicativity requirement of~(3) above, we shall call it an
\emph{integro-differential module} (similar terms could be introduced for the weaker properties~(1)
and~(2) but will not be needed for our purposes). It is now easy to see that the distribution
module~$\distmod{\galg}$ of Theorem~\ref{thm:dist-drbm} is indeed an ordinary integro-differential
module in this sense.

\begin{proposition}
  \label{prop:intdiff-mod}
  If~$(\galg,\der, \cum)$ is an ordinary shifted integro-differential algebra,
  $(\distmod{\galg}, \vder, \vcum)$ is an ordinary integro-differential module over~$\galg$.
\end{proposition}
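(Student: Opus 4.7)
By Theorem~\ref{thm:dist-drbm} we already know that $(\distmod{\galg}, \vder, \vcum)$ is a differential Rota-Baxter module over $\galg$, so the only outstanding tasks are (a) to verify the multiplicativity condition~(3) of Lemma~\ref{lem:char-intdiff-mod}, which upgrades this to an integro-differential module, and (b) to check that $\ker{\vder} = K$ so that the module qualifies as ordinary.

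For~(a) I would invoke Lemma~\ref{lem:char-intdiff-mod} directly. Since the ground algebra~$(\galg, \der, \cum)$ is assumed ordinary, property~(2a) of the lemma is automatic. Property~(1a) reduces to $\vcum c\phi = c \, \vcum \phi$ for $c \in \ker{\der} = K$, which is merely the $K$-linearity of~$\vcum$; this is built into the definition of $\vcum\colon \distmod{\galg} \to \distmod{\galg}$ as an extension of the $K$-linear $\cum$ on~$\pcwext{\galg}$ via the linear recursion~\eqref{eq:rb-dist}. The equivalence (1a) \textsl{\&} (2a) $\Leftrightarrow$ (3) in the lemma then yields $\mevl(f\phi) = \evl(f)\, \mevl(\phi)$, so $\distmod{\galg}$ is an integro-differential module.

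For~(b) I would take an arbitrary $\phi \in \ker{\vder}$ in its canonical representation~\eqref{eq:dist-can} from Lemma~\ref{lem:distmod-rel},
\[ \phi = f + \sum_{a \in K} f_a H_a + \sum_{a \in K} \sum_{k \ge 0} \lambda_{a,k}\, \delta_a^{(k)}, \]
and differentiate term by term, using $\vder H_a = \delta_a$ together with the basic relation $f_a \, \delta_a \equiv \ev_a(f_a)\, \delta_a$ to bring the result back into canonical form. Matching coefficients then forces $\der f = 0$, $\der f_a = 0$ and $\lambda_{a,k} = 0$ for all $k \ge 1$; since $\galg$ is ordinary we get $f, f_a \in K$, so $\ev_a(f_a) = f_a$, and the coefficient of $\delta_a$ (which combines $\ev_a(f_a)$ from differentiating $f_a H_a$ with shifted-index contributions from the Dirac terms) forces $f_a = 0$ and $\lambda_{a,0} = 0$. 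Hence $\phi = f \in K$.

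The only mildly delicate step is the bookkeeping in~(b): one must be careful that $\vder \phi$ is genuinely rewritten in the canonical form~\eqref{eq:dist-can} before matching coefficients, since the naive expression contains terms $f_a \delta_a$ that are not canonical. Once the basic relation of~$Z$ is applied, however, the argument becomes a routine comparison against the Gr\"obner basis of~$Z$ supplied by Lemma~\ref{lem:distmod-rel}.
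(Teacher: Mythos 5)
Your proposal is correct and follows essentially the same route as the paper: ordinariness is proved exactly as in the paper by differentiating the canonical representative from Lemma~\ref{lem:distmod-rel}, re-canonicalizing via $f_a\,\delta_a \equiv \ev_a(f_a)\,\delta_a$, and comparing coefficients (your extra care here actually tightens the paper's terser statement), after which the integro-differential property is obtained from Lemma~\ref{lem:char-intdiff-mod}, just as the paper does. One small bookkeeping remark: after differentiation the Dirac terms $\lambda_{a,k}\,\delta_a^{(k)}$ contribute only to $\delta_a^{(k+1)}$, so the coefficient of $\delta_a$ is purely $\ev_a(f_a)$ with no ``shifted-index'' admixture; this makes the coefficient comparison even cleaner than your parenthetical suggests and does not affect the conclusion $f_a = 0$, $\lambda_{a,0} = 0$.
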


\begin{proof}
  Let us first prove that~$\distmod{\galg}$ is ordinary, meaning~$\ker{\vder} = K$. Hence assume~$\vder \phi = 0$ for an arbitrary element~$\phi \in \distmod{\galg}$. By Lemma~\ref{lem:distmod-rel} we may assume
  \begin{equation*}
    \phi = f + \sum_{a \in K} f_{a} H_a +  \sum_{a \in K} \sum_{k \geq 0} \lambda_{a,k} \, \delta_a^{(k)}
  \end{equation*}
  for some~$f, f_a \in \galg$ and~$\lambda_{a,k} \in K$ so that
  \begin{equation*}
    f' + \sum_{a \in K} (f_a' H_a + f_a \delta_a) + \sum_{a \in K} \sum_{k \geq 0} \lambda_{a, k} \, \delta_a^{(k+1)} = 0.
  \end{equation*}
  Since the above representation is canonical by Lemma~\ref{lem:distmod-rel}, we
  obtain~$f' = f_a' = f_a = \lambda_{a,k} = 0$. But then we have~$\phi = f \in \ker{\der} = K$, so
  the differential module~$(\distmod{\galg}, \vder)$ is ordinary. From
  Lemma~\ref{lem:char-intdiff-mod} it follows immediately that~$(\distmod{\galg}, \vder, \vcum)$ is
  also an integro-differential module.
\end{proof}

The distribution module~$(\distmod{\galg}, \vder, \vcum)$ over the ordinary shifted
integro-differential algebra $(\galg, \der, \cum)$ can also be characterized in terms of a
\emph{universal mapping property}. First we encapsulate the minimal requirements for adjoining a
family of distributions~$\delta_a \; (a \in K)$ to the given integro-differential
algebra~$(\galg, \der, \cum)$. Algebraically, they are characterized by the sifting
property~\eqref{eq:extract-value}, the integro-differential relation
\begin{equation*}
  \delta^{(k)} \overset{\scriptstyle\vder}{\underset{\text{\tiny$\vcum$}}{\rightleftarrows}} \delta^{(k+1)} 
\end{equation*}
for~$k \ge 0$, and the stipulation that~$\delta_a$ has the Heaviside function~$H_a$ as its
antiderivative with integration constant~$-\bar{H}(a)$. From the latter stipulation, it is clear
that the resulting structure must contain the Rota-Baxter submodule~$\pcwext{\galg}$. Finally, we
hold fast to the analysis tradition of barring multiplication of distributions (see
Remark~\ref{rem:no-diffring} for the algebraic view of this proscription). The universal property
stated below can then be construed as exhibiting the distribution
module~$(\distmod{\galg}, \vder, \vcum)$ as the most economic solution to the task of adjoining
Dirac distributions subject to these minimal requirements.

\begin{definition}
  \label{def:dirac-module}
  Let~$(\galg, \der, \cum)$ be an integro-differential algebra. An integro-differential module
  $(\dirm, \vderd, \vcumd)$ over~$\galg$ is called a \emph{Dirac module}
  if~$\pcwext{\galg} \hookrightarrow \dirm$ as Rota-Baxter modules such
  that~\eqref{eq:extract-value} holds and~$\delta_a := \vderd H_a$
  satisfies~$\vcumd\, \delta_a = H_a - \bar{H}(a)$ as well
  as~$\smash{\vcumd\, \delta_a^{(k+1)} = \delta_a^{(k)}}$, for all~$a \in K$ and~$k \ge 0$.
\end{definition}

\begin{proposition}
  \label{prop:dirac-module}
  The differential Rota-Baxter module~$(\distmod{\galg}, \vder, \vcum)$ is the universal Dirac
  module over $(\galg, \der, \cum)$ that extends $(\pcwext{\galg}, \cum)$ as a Rota-Baxter
  module. In other words, for every Dirac module~$\dirm$ there is a unique integro-differential
  morphism~$\Phi\colon \distmod{\galg} \to \dirm$ that respects the canonical embedding
  of~$\pcwext{\galg}$.
\end{proposition}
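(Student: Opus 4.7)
The plan is to build the desired morphism $\Phi$ directly from the canonical presentation of $\distmod{\galg}$ furnished by Lemma~\ref{lem:distmod-rel}. Writing $\iota\colon \pcwext{\galg} \hookrightarrow \dirm$ for the given Rota-Baxter embedding and $\delta_a^\dirm := \vderd(\iota H_a)$, one sees from Definition~\ref{def:dirac-module} that the Dirac module~$\dirm$ already contains natural images for $H_a$, $\delta_a$, and all their higher derivatives. I would first define $\Phi$ on the free differential $\galg$-module $\galg\{H_a \mid a \in K\}_1$ as the unique differential $\galg$-module morphism sending $H_a \mapsto \iota(H_a)$, which automatically forces $\Phi(\delta_a^{(k)}) = (\delta_a^\dirm)^{(k)}$. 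Uniqueness of $\Phi$ on $\distmod{\galg}$ is then immediate, since any integro-differential morphism extending $\iota$ must commute with $\vder$ and be $\galg$-linear, leaving no freedom on these generators.

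To see that $\Phi$ descends to $\distmod{\galg}$, I would verify $\Phi(Z) = 0$ for the submodule $Z$ of Definition~\ref{def:distmod}. Because $Z$ is differentially generated by the elements $f \delta_a - \ev_a(f)\, \delta_a$ and $\Phi$ commutes with $\vder$, it suffices to observe that $\Phi(f \delta_a - \ev_a(f)\, \delta_a) = f \delta_a^\dirm - \ev_a(f)\, \delta_a^\dirm$ vanishes in~$\dirm$ by the sifting property built into Definition~\ref{def:dirac-module}. Once $\Phi\colon \distmod{\galg} \to \dirm$ is established as a well-defined differential $\galg$-module morphism extending $\iota$, the remaining task is to verify compatibility with the Rota-Baxter operator, namely $\Phi \vcum = \vcumd \Phi$.

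The main technical obstacle is this Rota-Baxter compatibility, but it admits a clean inductive strategy. Using Lemma~\ref{lem:distmod-rel} and $\galg$-linearity, I reduce the check to the basis elements $f H_a$ and $f \delta_a^{(k)}$. On the piecewise slice $f H_a \in \pcwext{\galg}$ the identity is immediate because $\iota$ is, by hypothesis, a morphism of Rota-Baxter modules. For $f \delta_a^{(k)}$ I would induct on~$k$, mimicking the recursion~\eqref{eq:rb-dist} used to define $\vcum$ on $\distmod{\galg}$. The base case $k=0$ reduces via sifting to the identity $\vcumd \delta_a^\dirm = \iota H_a - \bar{H}(a)$, which is a Dirac module axiom. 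The inductive step requires the analog $\vcumd f\, (\delta_a^\dirm)^{(k+1)} = f\, (\delta_a^\dirm)^{(k)} - \vcumd f'\, (\delta_a^\dirm)^{(k)}$ inside~$\dirm$; this follows from property~(1b) of Lemma~\ref{lem:char-intdiff-mod}, available since $\dirm$ is integro-differential, combined with the Dirac module relation $\vcumd (\delta_a^\dirm)^{(k+1)} = (\delta_a^\dirm)^{(k)}$. Matching recursions on the two sides then closes the induction and completes the proof.
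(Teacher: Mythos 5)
Your proposal is correct and takes essentially the same route as the paper: define the map on the free differential module $\galg\{H_a \mid a \in K\}_1$ by sending $H_a \mapsto \iota(H_a)$, annihilate the relation module $Z$ via the sifting property, obtain uniqueness from $\galg$-generation by the $H_a^{(k)}$, and verify Rota-Baxter compatibility on $f H_a$ through the Rota-Baxter embedding of $\pcwext{\galg}$ and on the Diracs through the antiderivative axioms of a Dirac module. The only (immaterial) difference is in the final verification: the paper uses sifting to reduce to $f=1$ for all $k$ and then quotes the Dirac relations directly, whereas you keep general $f$ and induct on $k$ using property~(1b) of Lemma~\ref{lem:char-intdiff-mod}, which is indeed available since $\dirm$ is by definition an integro-differential module.
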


\begin{proof}
Let~$\kappa\colon \pcwext{\galg} \hookrightarrow \dirm$ be the embedding of Rota-Baxter modules from Definition~\ref{def:dirac-module}, and let~$u_{\pcwext{}}$, $u_{\distmod{}}$, $\iota$ be as in the diagram before Theorem~\ref{thm:dist-drbm}.

\begin{wrapfigure}{R}{0.5\textwidth}
  \centering\vspace{-1.6em}
$\xymatrix @M=0.6pc @=2.25pc @C=5pc %
 {\galg \ar_{u_{\pcwext{}}}[d] \ar^{u_{\distmod{}}}[drr] \ar@{-}[dr]\\
 \pcwext{\galg} \ar@{^(->}^(0.25){\iota}[rr] \ar@{^(->}^{\kappa}[]!<0.5ex,-2.5ex>;[drr] & \ar^{u_{\dirm}}[dr] & \distmod{\galg} \ar@{-->}^{\Phi}[d] \\
 && \dirm}$
\vspace{-1em}
\end{wrapfigure}

Furthermore, we will write~$u_{\dirm}$ for the structure map of the $\galg$-module~$\dirm$. We construct a morphism of integro-differential modules $\Phi\colon \distmod{\galg} \to \dirm$ that makes the right-hand diagram commute. It suffices to show~$\Phi \iota = \kappa$ since then~$\Phi u_{\distmod{}} = u_{\dirm}$ follows from the module structures~$\iota u_{\pcwext{}} = u_{\distmod{}}$ and $\kappa u_{\pcwext{}} = u_{\dirm}$.

If the required map~$\Phi$ exists, it must be $\galg$-linear and send~$(\iota H_a)^{(k)}$
to~$(\kappa H_a)^{(k)}$. But this defines~$\Phi$ uniquely because $\distmod{\galg}$ is generated
by~$(\iota H_a)^{(k)}$ as an $\galg$-module. Defining
first~$\tilde{\Phi}\colon \galg \{H_a \mid K \}_1 \to \dirm$ by these requirements, it follows at
once that~$\tilde\Phi$ is in fact a morphism of differential $\galg$-modules. For seeing that it
lifts to a map~$\Phi\colon \distmod{\galg} \to \dirm$, we must show~$\tilde\Phi(Z) =
0$.
Since~$\Phi$ respects the derivation, it suffices to prove that~$\Phi$ annihilates the differential
generators~$f \, \delta_a - \ev_a(f) \, \delta_a$ or, more precisely, the corresponding elements
$u_{\distmod{}}(f) \, \iota(H_a)' - \ev_a(f) \, \iota(H_a)'$. But this follows immediately from the
sifting property~\eqref{eq:extract-value} of the Dirac module~$\dirm$.

We have now a differential morphism~$\Phi\colon \distmod{\galg} \to \dirm$ that clearly satisfies the required commutation property~$\Phi \iota = \kappa$. Moreover, it is clear from the construction that~$\Phi$ is unique. Hence it only remains to prove that~$\Phi$ is also a morphism of Rota-Baxter algebras over~$\galg$. To this end, we show first that
\begin{equation}
\label{eq:univ-rb-pcwext}
\vcum_{\!\dirm} \Phi(f \, \iota H_a) = \Phi \, \vcum (f \, \iota H_a).
\end{equation}
Note that the left-hand side may be written as~$\vcum_{\!\dirm} \kappa (f \, \iota H_a)$ since~$\Phi \iota = \kappa$. Since by hypothesis we have~$\pcwext{\galg} \hookrightarrow \dirm$ as Rota-Baxter $\galg$-modules, we may now apply~$\vcum_{\!\dirm} \kappa = \kappa \cum$ and then expand the integral~$\cum$ of~$\pcwext{\galg}$ to obtain
\begin{equation*}
  \kappa \Big( (\cum_{\!\pospart{a}} f) \, H_a-(\cum_{\!\negpart{a}}^0 f)\, \bar{H_a} \Big) = \Phi \Big( (\cum_{\!\pospart{a}} f) \, \iota H_a-(\cum_{\!\negpart{a}}^0 f)\, \iota \bar{H_a} \Big)
\end{equation*}
for the left-hand side of~\eqref{eq:univ-rb-pcwext}, using again~$\Phi \iota = \kappa$ for the last step. Recalling that~$\vcum$ on~$\distmod{\galg}$ was defined as an extension of~$\cum$ on~$\distmod{\galg}$, this yields the right-hand side of~\eqref{eq:univ-rb-pcwext}. It remains to prove
\begin{equation}
\label{eq:univ-rb-dirac}
  \vcum_{\!\dirm} \Phi(f \, \delta_a^{(k)}) = \Phi \, \vcum (f \, \delta_a^{(k)})
\end{equation}
for all~$k \ge 0$. By the sifting property~\eqref{eq:extract-value}, valid in~$\distmod{\galg}$ as well as~$\dirm$, we may replace~$f$ by~$\ev_a(f)$ on both sides of~\eqref{eq:univ-rb-dirac}. Hence we may set~$f=1$ for the proof of~\eqref{eq:univ-rb-dirac}. For~$k=0$, we use the antiderivative relation of the Dirac module~$\dirm$ in the precise form~$\vcum (\kappa H_a)' = \kappa H_a - \bar{H}(a)$ to obtain
\begin{equation*}
  \vcum_{\!\dirm} \Phi \delta_a = \vcum (\kappa H_a)' = \kappa \big( H_a - \bar{H}(a) \big) = \Phi (\iota H_a - \bar{H}(a)) = \Phi (\vcum \delta_a)
\end{equation*}
as required. For~$k > 0$, Equation~\eqref{eq:univ-rb-dirac} follows immediately from~$\vcum_{\!\dirm} (\kappa H_a)^{(k)} = (\kappa H_a)^{(k-1)}$, which holds since~$\dirm$ is a Dirac module.
\end{proof}

As in the piecewise extension~$(\pcwext{\galg}, \cum)$, we can also provide \emph{shifted
  evaluations} on the distribution module~$(\distmod{\galg}, \vcum)$ if we have a shift
map~$S\colon K \to \Aut(\galg)$ on the ground algebra~$(\galg, \cum)$. Then we
define~$\mshift\colon K \to \Aut(\distmod{\galg})$ by extending~$\mshift_c \, H_a = H_{a-c}$
and~$\mshift_c \, \delta_a = \delta_{a-c} \; (a, c \in K)$ through linearity and
multiplicativity. It is immediate that~$\mshift$ is a shift map on the distribution
module~$\distmod{\galg}$. The latter is an ordinary integro-differential module if~$\galg$ is an
ordinary integro-differential algebra (Proposition~\ref{prop:intdiff-mod}), hence we get shifted
evaluations on~$\distmod$ by setting~$\mev_c := \mevl \circ \mshift_c$. Clearly, this
yields~$\mev_c H_a = \bar{H}(a-c)$ and~$\mev_c \delta_a^{(k)} = 0$ on the generators as
per~\eqref{eq:mod-eval}. As usual we write~$\vcum_b \: (b \in K)$ for the resulting shifted
Rota-Baxter operators.

\begin{theorem}
  \label{thm:intdiff-mod-shifted}
  If~$(\galg,\der, \cum)$ is an ordinary shifted integro-differential algebra,
  $(\distmod{\galg}, \vder, \vcum)$ is an ordinary shifted integro-differential module
  over~$\galg$. Its shifted Rota-Baxter operators are given by the recursion~\eqref{eq:rb-dist},
  with~$\vcum$ replaced by~$\vcum_b$, and by the base case~\eqref{eq:dirac-ader}, with~$\bar{H}(a)$
  replaced by~$\bar{H}(a-b)$ or~$H(a)$ replaced by~$H(a-b)$.
\end{theorem}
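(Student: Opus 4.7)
The plan is to piggy-back on the work already done: Proposition~\ref{prop:intdiff-mod} gives us that $(\distmod{\galg}, \vder, \vcum)$ is an ordinary integro-differential module over $\galg$, so the only new content of the theorem is that the declared shift map $\mshift$ is compatible with both the derivation and the Rota-Baxter operator (making it shifted in the sense of Definition~\ref{def:shifted-evaluations}), plus the explicit recursion for the shifted Rota-Baxter operators $\vcum_b$. I would carry out the verification in the following order.

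First, I would confirm that $\mshift_c\colon \galg\{H_a \mid a \in K\}_1 \to \galg\{H_a \mid a \in K\}_1$, defined as the $K$-linear semilinear extension via $\mshift_c(f H_a^{(k)}) = S_c(f)\, H_{a-c}^{(k)}$, descends to $\distmod{\galg}$. For this, one checks that $\mshift_c$ sends each canonical generator $\zeta_{f,k}$ of $Z$ (from Lemma~\ref{lem:distmod-rel}) to another such generator: using the shift relation $\ev_{a-c} \circ S_c = \ev_a$ on $\galg$, one finds $\mshift_c(\zeta_{f,k}) = \zeta_{S_c f,\, k}$ with the basepoint $a$ replaced by $a-c$, hence $\mshift_c(Z) \subseteq Z$. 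The group law $\mshift_c \circ \mshift_d = \mshift_{c+d}$ is then immediate from the group law on $\galg$ and the action on the generators $H_a^{(k)} \mapsto H_{a-c}^{(k)}$.

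Next, I would verify $[\mshift_c, \vder] = 0$ by a one-line check on the generators $H_a^{(k)}$ (both sides give $H_{a-c}^{(k+1)}$, and the $\galg$-part is handled by the shift compatibility already present in $\galg$). The substantial step is the Rota-Baxter compatibility $[\mshift_c, \vcum] = \mev_c \vcum$. Since $\pcwext{\galg} \subseteq \distmod{\galg}$ is a shifted Rota-Baxter submodule by Proposition~\ref{prop:pcw}, the relation already holds on $f H_a \in \pcwext{\galg}$; it remains to verify it on the purely distributional generators $f\,\delta_a^{(k)}$. By the sifting relation $f \delta_a = \ev_a(f)\delta_a$ in $\distmod{\galg}$ (and its consequences for higher derivatives via Lemma~\ref{lem:distmod-rel}), one may reduce to checking the identity on $\delta_a^{(k)}$ with unit coefficient. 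I would then proceed by induction on $k$. For $k=0$, direct computation using~\eqref{eq:dirac-ader} gives
\begin{align*}
  \mshift_c \vcum \delta_a &= H_{a-c} - \bar{H}(a),\\
  \vcum \mshift_c \delta_a + \mev_c \vcum \delta_a &= \bigl(H_{a-c} - \bar{H}(a-c)\bigr) + \bigl(\bar{H}(a-c) - \bar{H}(a)\bigr),
\end{align*}
which agree. For $k \geq 1$, both $\vcum$ and $\vcum$ after shifting reduce the exponent via $\vcum \delta_a^{(k)} = \delta_a^{(k-1)}$, while $\mev_c$ annihilates all $\delta_a^{(j)}$ by~\eqref{eq:mod-eval}, so the identity propagates trivially. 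The main obstacle is precisely this case analysis and the bookkeeping with the three-valued Heaviside operator $H(a), \bar{H}(a)$; once one trusts the $\pcwext{\galg}$-part from Proposition~\ref{prop:pcw}, the new cases are quite short.

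Finally, having established that $\mshift$ makes $(\distmod{\galg}, \vder, \vcum)$ a shifted integro-differential module, the formula $\vcum_b = (1 - \mev_b)\vcum = \mshift_{-b}\vcum \mshift_b$ from the paragraph before Proposition~\ref{prop:pcw} applies here verbatim. Substituting into~\eqref{eq:rb-dist} and~\eqref{eq:dirac-ader} gives $\vcum_b \delta_a = \vcum \delta_a - \mev_b \vcum \delta_a = H_a - \bar{H}(a-b)$ for the base case and $\vcum_b f \delta_a^{(k)} = f \delta_a^{(k-1)} - \vcum_b f' \delta_a^{(k-1)}$ for the recursive step (the derivation part being unaffected by $\mev_b$ on $\delta_a^{(k-1)}$), which is exactly the asserted recursion.
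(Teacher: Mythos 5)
Your proposal is correct and follows essentially the same route as the paper: reduce to the two compatibility relations $[\mshift_c,\vder]=0$ and $[\mshift_c,\vcum]=\mev_c\vcum$ via Proposition~\ref{prop:intdiff-mod}, observe that they hold on $\pcwext{\galg}$ by construction and verify them on the Dirac generators, and read off the recursion for $\vcum_b$ from $\vcum_b=(1-\mev_b)\vcum$. The only cosmetic differences are that you first reduce to unit coefficients via the relations of Lemma~\ref{lem:distmod-rel} where the paper inducts on~$k$ with a general coefficient~$f$, and that you spell out $\mshift_c(Z)\subseteq Z$, which the paper treats as immediate before the theorem.
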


\begin{proof}
  The recursive description of the shifted Rota-Baxter operators follows immediately from the
  definition~$\vcum_b := (1-\mev_b) \cum$.  In view of Proposition~\ref{prop:intdiff-mod}, it then
  remains to prove the compatibility relations~$[\mshift_c, \vcum] = \mev_c \vcum$ and
  $[\mshift_c, \vder] = 0$.  Let us start with the former.

  Since~$\mshift_c$ and~$\vcum$ as well as~$\mev_c$ agree
  on~$\pcwext{\galg} \subset \distmod{\galg}$ by definition, it suffices to consider elements of the
  form~$f \delta_a^{(k)} \; (k \ge 0)$. We apply induction on~$k$. For the base case~$k = 0$, we
  obtain~$\ev_a(f) \, \big(\bar{H}(a-c) - \bar{H}(a) \big)$ for both left-hand and right-hand side
  of the relation~$[\mshift_c, \vcum] = \mev_c \vcum$ applied to~$f \delta_a$.  Now assume the
  relation for all~$f \delta_a^{(k)}$ with fixed~$k \ge 0$; we must show it
  for~$f \delta_a^{(k+1)}$. A straightforward computation, using the induction hypothesis
  on~$\vcum f' \delta_a^{(k)}$, yields~${} -\mev_c \vcum f' \delta_a^{(k)}$ for both sides
  of~$[\mshift_c, \vcum] = \mev_c \vcum$ as applied to~$f \delta_a^{(k+1)}$.

  Let us now turn to the commutation identity~$\mshift_c \vder = \vder \mshift_c$. Since~$\galg$ is
  a shifted integro-differential algebra by hypothesis, we need only consider elements of the
  form~$f H_a^{(k)} \: (k \ge 0)$. For those one obtains
  indeed~$\mshift_c \vder \, f H_a^{(k)} = \vder \mshift_c \, f H_a^{(k)} = S_c(f') \,
  \delta_{a-c}^{(k)} + S_c(f) \, H_{a-c}^{(k+1)}$,
  making use of the commutation identity on~$\galg$.
\end{proof}

\def\vvcum{\text{$\vcum$}}
\def\shiftarrow{\ar^*-<2.5ex>{^{\sim}}_*+<1.25ex>{_{{S_{c}}}}[r]}
\def\modwithdots#1{\stackrel{\vdots\strut}{\distmod{\galg}_#1^{(2)}}}
\begin{wrapfigure}[14]{r}{0.35\textwidth}
\centering\vspace{-2em}

$\xymatrix @M=0.6pc @=2.25pc @C=5pc {
  \modwithdots{a} \ar@<0.5ex>^{\vvcum}[d] \shiftarrow & 
  \modwithdots{b} \ar@<0.5ex>^{\vvcum}[d]\\
  \distmod{\galg}_a^{(1)} \shiftarrow \ar@<0.5ex>^{\vvcum}[d] \ar@<0.5ex>^{\vder}[u] &
 \distmod{\galg}_b^{(1)} \ar@<0.5ex>^{\vvcum}[d] \ar@<0.5ex>^{\vder}[u]\\
  \distmod{\galg}_a^{(0)} \shiftarrow \ar@<0.5ex>^{\vder}[u] & \distmod{\galg}_b^{(0)}
  \ar@<0.5ex>^{\vder}[u]\\
  \space}$
\end{wrapfigure}
 
It is gratifying that all the required properties of the ground algebra~$\galg$ are inherited by the
module~$\distmod{\galg}$: the integro-differential structure, ordinariness, and the shift structure.
We end this section by endowing the distribution module~$\distmod{\galg}$ with an \emph{ascending
  filtration}. Indeed, let us start by writing $\distmod{\galg}_a$ for the differential
$\galg$-submodule generated by~$H_a$. By~\eqref{eq:dist-can}, its elements have the canonical
form~$f + f_a H_a + \sum_k \lambda_{a,k} \, \smash{\delta_a^{(k)}}$ with~$f, f_a \in \galg$
and~$\lambda_{a,k} \in K$. A glance at~\eqref{eq:rb-dist} confirms at once that such elements are
also closed under the Rota-Baxter operator, so we have a differential Rota-Baxter
submodule~$(\distmod{\galg}_a, \vder, \vcum)$ and indeed a direct
sum~$\distmod{\galg} = \bigoplus_a \distmod{\galg}_a$ of differential Rota-Baxter submodules. The
$\distmod{\galg}_a$ are of course not shifted submodules, but the shift map restricts to
isomorphisms~$S_c\colon \distmod{\galg}_a \overset{\sim}{\to} \distmod{\galg}_b$, for any~$c \in K$
and~$b := a-c$. Next we define~$\distmod{\galg}_a^{(k)} \subseteq \distmod{\galg}_a$ as the
$\galg$-submodule generated by all~$H_a^{(j)}$ with~$j \le k$; note also that the piecewise
extension is given by~$\pcwext{\galg} = \bigoplus_a \distmod{\galg}_a^{(0)}$. It is obvious
that~$\vder$ maps~$\distmod{\galg}_a^{(k)}$ to~$\distmod{\galg}_a^{(k+1)}$, and one see
from~\eqref{eq:rb-dist} that~$\vcum$ restricts to a map from~$\distmod{\galg}_a^{(k+1)}$
to~$\distmod{\galg}_a^{(k)}$. It is also clear that~$(\distmod{\galg}_a^{(k)})_{k \ge 0}$ forms an
ascending $\galg$-module filtration of~$\distmod{\galg}_a$. We conclude that
each~$\distmod{\galg}_a$ as well as the entire distribution module~$\distmod{\galg}$ is a filtered
differential Rota-Baxter module (see the figure above). Moreover, the restricted shift
maps~$S_{c}\colon \distmod{\galg}_a \overset{\sim}{\to} \distmod{\galg}_b$ restrict further
to~$S_{c}\colon \distmod{\galg}_a^{(k)} \overset{\sim}{\to} \distmod{\galg}_b^{(k)}$.

For some purposes one needs only a few Heavisides (and Diracs), rather than the whole
gamut~$H_a \; (a \in K)$; in the extreme case one gets the \emph{slim distribution
  module}~$\slimmod{\galg}$, which is differentially generated by a single Heaviside that we shall
denote by~$\hat{H}$, its derivative being written~$\hat{\delta} := \hat{H}'$. The whole construction
given in this section may obviously be repeated verbatim to obtain the differential Rota-Baxter
module~$\slimmod{\galg}$. Alternatively, one may achieve the same result by slimming the hierarchy
of the above figure, namely by setting~$\slimmod{\galg} = \distmod{\galg} / N_{\distmod{}}$
with $\hat{H} := H_0 + N_{\distmod{}}$, where~$N_{\distmod{}} \subset \distmod{\galg}$ is the
differential Rota-Baxter submodule generated by the set~$N := \{ H_a \mid a \in \nonzero{K} \}$.
Similarly, one gets the \emph{slim piecewise extension}
$\slimext{\galg} = \pcwext{\galg} / N_{\pcwext{}}$ where~$N_{\pcwext{}} \subset \pcwext{\galg}$ is
the ideal generated by~$N$. Obviously, we may view~$\slimmod{\galg}$ as a module
over~$\slimext{\galg}$. We shall need the slim distribution module~$\slimmod{\galg}$ and the slim
piecewise extension~$\slimext{\galg}$ in the next section for obtaining the bivariate ``diagonal''
distribution~$\delta(x-\xi)$. In fact, we shall only need the $K$-subspace generated by~$\hat{H}$
and its derivatives; let us denote this space by~$\slimmod{K} \subset \slimmod{\galg}$. Likewise, we
shall write~$\slimext{K} \subset \slimext{\galg}$ for the $K$-subalgebra generated by~$\hat{H}$
alone.

\section{Bivariate Distributions}
\label{sec:biv-distributions}

Since one of our main applications in Section~\ref{sec:appl} will be to provide an algebraic model
of the \emph{bivariate Green's function} corresponding to a given boundary problem, it is now
necessary to expand the distribution module~$\distmod{\galg}$. While the latter contains only
univariate Heavisides~$H(x-a)$ and their derivatives (with $a \in K$ fixed), we shall also need
their counterparts~$H(\xi-a)$ in another variable~$\xi$, and moreover the ``diagonal'' Heaviside
function~$H(x-\xi)$ with its derivatives.\footnote{\label{fn:biv-dist}Our present treatment of
  bivariate distributions is very limited. A more comprehensive algebraic theory will allow more
  general distributions, containing at least~$\delta(a_1 x_1 + \cdots + a_n x_n)$.  While such a
  development should properly be given in an LPDE context, we are here only interested in LODE
  boundary problems where the three distribution
  families~$\delta_a(x), \delta_a(\xi), \delta(x-\xi)$ turn out to be sufficient
  (Section~\ref{sec:appl}).} Let us first concentrate on the former.

We start with the tensor product~$\galg_2 := \galg \otimes_K \galg$, writing its
elements~$f_1 \otimes f_2$ as~$f_1(x) \, f_2(\xi)$.  Note that~$\galg_2$ is
an~$\galg$-\emph{bimodule} with two derivations and two Rota-Baxter operators
\begin{align*}
  \der_x (f_1 \otimes f_2) = (\der f_1) \otimes f_2,\qquad
  \der_\xi (f_1 \otimes f_2) = f_1 \otimes (\der f_2),\\
  \cum^x (f_1 \otimes f_2) = (\cum f_1) \otimes f_2,\qquad
  \cum^\xi (f_1 \otimes f_2) = f_1 \otimes (\cum f_2).
\end{align*}
We have two embeddings~$\iota_x, \iota_\xi\colon \galg \to \galg_2$ with~$\iota_x(f) = f \otimes 1$
and~$\iota_\xi(f) = 1 \otimes f$; we denote their images by~$\galg_x$ and~$\galg_\xi$,
respectively. For a ground element~$f\in \galg$, their embeddings are also written
as~$f(x) := \iota_x(f) \in \galg_x$ and~$f(\xi) := \iota_\xi(f) \in \galg_\xi$.

Note that both~$(\galg_2, \der_x, \cum^x)$ and~$(\galg_2, \der_\xi, \cum^\xi)$ are
\emph{integro-differential algebras} over~$K$, though not ordinary ones
since~$\ker{\der_x} = \galg_\xi$ and~$\ker{\der_\xi} = \galg_x$. In addition to the duplex
differential Rota-Baxter structure, $\galg_2$ has two shift
operators~$\bivsh{x}{a} (f_1 \otimes f_2) := (S_a f_1) \otimes f_2$
and~$\bivsh{\xi}{a} (f_1 \otimes f_2) := f_1 \otimes (S_a f_2)$.
If~$\tau\colon \galg_2 \to \galg_2$ is the usual exchange
automorphism~$\tau(f_1 \otimes f_2) = f_2 \otimes f_1$, the derivations, Rota-Baxter and shift
operators are conjugate under~$\tau$, meaning~$\der_\xi = \tau \der_x
\tau$,~$\:
\smash{\cum^\xi} = \tau \smash{\cum^x} \tau$, $\bivsh{\xi}{a} = \tau \bivsh{x}{a} \tau$.

\begin{definition}
  \label{def:pure-distmod}
  The \emph{pure distribution modules} are introduced by
  $\distmod_x{\galg} := \distmod{(\galg_2, \der_x, \smash{\cum^x})}$ and
  $\distmod_\xi{\galg} := \distmod{(\galg_2, \der_\xi, \smash{\cum^\xi})}$. We
  write~$H(x-a) \in \distmod_x{\galg}$ and~$H(\xi-a) \in \distmod_\xi{\galg}$ for the corresponding
  differential generators ($a \in K$).
\end{definition}

In this context, we will revive our abbreviations~$H(a-x) \in \distmod_x{\galg}$
and~$H(a-\xi) \in \distmod_\xi{\galg}$. By virtue of~$\delta = H'$, we have
likewise~$\delta(x-a) \in \distmod_x{\galg}$ and~$\delta(\xi-a) \in \distmod_\xi{\galg}$. Finally,
we define the action of~$\vder_x, \vcum^x, \bivmsh{x}{a}$ on~$\distmod_\xi{\galg}$ by regarding
the~$H(\xi-a)$ as constants, meaning we
set $\vder_x \, f \smash{H_a^{(k)}} := (\der_x f) \, H_a^{(k)}$,
$\: \vcum^x f H_a^{(k)} := (\cum^x f) \, \smash{H_a^{(k)}}$
and~$\bivmsh{x}{a} \, f \smash{H_a^{(k)}} := (\bivsh{x}{a} f) \, H_a^{(k)}$.  The action
of~$\vder_\xi, \vcum^\xi, \bivmsh{\xi}{a}$ on~$\distmod_x{\galg}$ is defined analogously. Altogether
we obtain the two duplex shifted differential Rota-Baxter modules
$(\distmod_x{\galg}, \vder_x, \vder_\xi, \vcum^x, \vcum^\xi)$
and~$(\distmod_\xi{\galg}, \vder_x, \vder_\xi, \vcum^x, \vcum^\xi)$. Their induced evaluations are
written as~$\mevl_x := 1 - \vcum^x \, \vder_x$ and~$\mevl_\xi := 1 - \vcum^\xi \, \vder_\xi$, along
with the shifted versions~$\bivmev{x}{a} := \mevl_x \, \bivmsh{x}{a}$
and~$\bivmev{\xi}{a} := \mevl_\xi \, \bivmsh{\xi}{a}$.

Note that both pure distribution modules contain the corresponding piecewise extension
algebras~$\pcwext_x{\galg} \subset \distmod_x{\galg}$
and~$\pcwext_\xi{\galg} \subset \distmod_\xi{\galg}$.  These rings can be combined into the
\emph{bivariate piecewise
  extension}~$\pcwext_{x\xi}{\galg} := \pcwext_x{\galg} \otimes_\galg \pcwext_\xi{\galg}$, which is
useful for representing the characteristic functions\footnote{As in~\cite{RosenkranzSerwa2015} we
  use the \emph{Iverson bracket notation}~\cite[\S2.2]{GrahamKnuthPatashnik1994} for characteristic
  functions of intervals.} of a rectangle~$(x, \xi) \in [a, b] \times [c, d]$ by the tensor product
$[a \le x \le b] \otimes [c \le \xi \le d]$ with Heaviside
factors~$[a \le x \le b] := H(x-a) \, H(b-x) \in \pcwext_x{\galg}$
and~$[c \le \xi \le d] := H(\xi-c) \, H(d-\xi) \in \pcwext_\xi{\galg}$; this is needed in
Section~\ref{sec:appl}. By analogy to the situation in~$\galg_2$, we shall drop the $\otimes$
symbol, thus writing~$H(x-a) \, H(\xi-b)$ for what is strictly
speaking~$H_a \otimes H_b \in \pcwext_{x\xi}{\galg}$. Note that $\pcwext_{x\xi}{\galg}$ is a duplex
shifted differential Rota-Baxter algebra over~$\galg_2$, analogous to the univariate case.

We will now combine the univariate distribution modules~$\distmod_x{\galg}$
and~$\distmod_\xi{\galg}$ along with the bivariate piecewise extension~$\pcwext_{x\xi}{\galg}$ into
a single module. To this end, note that both~$\distmod_x{\galg} \otimes_\galg \pcwext_\xi{\galg}$
and~$\pcwext_x{\galg} \otimes_\galg \distmod_\xi{\galg}$ contain isomorphic copies of the
$\galg$-submodule~$\pcwext_{x\xi}{\galg}$ with which they are identified. With this identification,
we form the direct sum of~$\distmod_x{\galg} \otimes_\galg \pcwext_\xi{\galg}$
and~$\pcwext_x{\galg} \otimes_\galg \distmod_\xi{\galg}$ which we call the \emph{tensorial
  distribution module} and denote by~$\distmod_{x\xi}{\galg}$. Regarding the ``foreign'' tensor
factors as constants (see the comments after Definition~\ref{def:pure-distmod}), all structures
combine into a duplex shifted differential Rota-Baxter module
$(\distmod_{x\xi}{\galg}, \vder_x, \vder_\xi, \vcum^x, \vcum^\xi)$ over~$\galg_2$, which is
simultaneously a module over~$\pcwext_{x\xi}{\galg}$. Thus far, the situation is parallel to that of
Theorem~\ref{thm:dist-drbm}.

The algebraic description of the \emph{diagonal Heavisides}~$H(x-\xi)$ and diagonal
Diracs~$\delta(x-\xi)$ is somewhat more complicated. At the level of elements, we insert them
essentially by tacking a slim distribution module on top of~$\distmod_{x\xi}{\galg}$. However, the
crucial question is how to combine the diagonal Heavisides with the univariate ones to form a
uniform Rota-Baxter structure on the resulting module. The required relation is easy to find if we
want to keep touch with analysis. Indeed, for a moment let us think of~$K = \RR$ with a
fixed~$a \in \RR$ and variables~$x, \xi$ ranging over~$\RR$. We have~$x \ge a \land x \ge \xi$ iff
$(x \ge a \land a \ge \xi) \lor (x \ge \xi \land a \le \xi)$ since we may split the cases~$a > \xi$
and~$a < \xi$, the remaining possibility~$a = \xi$ holding in both cases above. Translating into
Heavisides, this yields
\begin{equation}
  \label{eq:biv-heaviside}
  H(x-a) \, H(x-\xi) = H(x-a) \, H(a-\xi) + H(x-\xi) \, H(\xi-a)
\end{equation}
or~$H_a(x) \, \hat{H} = H_a(x) \, \bar{H}_a(\xi) + H_a(\xi) \, \hat{H}$ in our algebraic
language. We can formulate this into a proper definition of the module providing diagonal Heavisides
and Diracs. Here we must take recourse to our earlier interpretations
$H_a(x) := H_a \otimes 1 \in \pcwext_{x\xi}{\galg}$
and~$H_a(\xi) := 1 \otimes H_a \in \pcwext_{x\xi}{\galg}$.

\begin{definition}
  \label{def:diagdist}
  Let~$\hat{Z}$ be the $\pcwext_{x\xi}{\galg}$-submodule
  of~$\pcwext_{x\xi}{\galg} \otimes_K \slimmod{K}$ that is generated by the
  set $\{ \big( H_a(x) - H_a(\xi) \big) \hat{H} - H_a(x) \, \bar{H}_a(\xi) \mid a \in K \}$. Then
  the $\pcwext_{x\xi}{\galg}$-module
  \begin{equation*}
    \distmod_{x-\xi}{\galg} := \frac{\pcwext_{x\xi}{\galg} \otimes_K \slimmod{K}}{\hat Z}
  \end{equation*}
  is called the \emph{diagonal distribution module}. We shall denote the (congruence class of) its
  slim generator~$\hat{H} \in \slimmod{K}$ by~$H(x-\xi)$, and its
  derivative~$\hat{\delta} \in \slimmod{K}$ by~$\delta(x-\xi)$. Analogously to the univariate
  case, we set also~$H(\xi-x) := 1 - \hat{H}$.
\end{definition}

It should also be emphasized that the submodule~$\hat{Z}$ is \emph{not differentially generated}. In
other words, one is not supposed to differentiate the relation~\eqref{eq:biv-heaviside} as this
would once again lead to inconsistencies. (The situation is completely analogous to the univariate
case where one is not supposed to differentiate the relation~$H_a^2 = H_a$; confer
Remark~\ref{rem:no-diffring}.)

At this point we have two $\pcwext_{x\xi}{\galg}$-modules~$\distmod_{x\xi}{\galg}$
and~$\distmod_{x-\xi}{\galg}$. Since~$\galg_2 \subset \pcwext_{x\xi}$, we may also view them
as~$\galg_2$-modules. It is easy to see that as such they are \emph{free modules} just
as~$\pcwext_{x\xi}$ itself is free as an~$\galg_2$-module. Indeed, the bivariate piecewise
extension~$\pcwext_{x\xi}$ has the $\galg_2$-basis
$\mathcal{B} := \{ 1, H_a(x), H_a(\xi), H_a(x) \, H_b(\xi) \mid a, b \in K \}$, while the tensorial
distribution module~$\distmod_{x\xi}{\galg}$
has~$\mathcal{B}_{x\xi} := \mathcal{B} \cup \{ H_a(x) \, \delta^{(n)}(b-\xi), H_a(\xi) \,
\delta^{(n)}(b-x) \mid a, b \in K; n \in \NN \}$
as an $\galg_2$-basis. Finally, using the relation~\eqref{eq:biv-heaviside}, the diagonal
distribution module~$\distmod_{x-\xi}{\galg}$ can be equipped with the ``left-focused''
$\galg_2$-basis
$\mathcal{B}_x := \mathcal{B} \cup \{ H^{(n)}(x-\xi), H_a(x) \, H^{(n)}(x-\xi) \mid a \in K, n \ge 0
\}$
or with its ``right-focused'' companion
$\mathcal{B}_\xi := \mathcal{B} \cup \{ H^{(n)}(x-\xi), H_a(\xi) \, H^{(n)}(x-\xi) \mid a \in K, n
\ge 0 \}$.

We can now put together the tensorial and the diagonal distribution module to obtain the full
\emph{bivariate distribution module}. The latter is already equipped with a duplex differential
Rota-Baxter structure, which we shall soon extend to the whole bivariate distribution module in such
a way that~$\distmod_{x\xi}{\galg}$ but not~$\distmod_{x-\xi}{\galg}$ will occur as a duplex
differential Rota-Baxter submodule.

\begin{definition}
  \label{def:bivdistmod}
  The \emph{bivariate distribution module} is given
  by~$\bivmod{\galg} := \distmod_{x\xi}{\galg} \oplus \distmod_{x-\xi}{\galg}$, as a direct sum of
  $\pcwext_{x\xi}{\galg}$-modules.
\end{definition}

Let us first extend the two
\emph{derivations}~$\vder_x, \vder_\xi\colon \distmod_{x\xi} \to \distmod_{x\xi}$ to the diagonal
distribution module~$\distmod_{x-\xi}{\galg}$. For defining~$\vder_x$ we use the
$\galg_2$-basis~$\mathcal{B}_\xi$ to set~$\vder_x H^{(n)}(x-\xi) := H^{(n+1)}(x-\xi)$. Regarding
the~$H_a(\xi)$ as constants, the map~$\vder_x\colon \distmod_{x-\xi}{\galg} \to \bivmod{\galg}$ is
uniquely determined as an extension
of~$\vder_x\colon \pcwext_{x\xi} \to \distmod_{x\xi} \subset \bivmod{\galg}$.  Analogously, the
map~$\vder_\xi\colon \distmod_{x-\xi}{\galg} \to \bivmod{\galg}$ is introduced as an extension
of~$\der_\xi\colon \pcwext_{x\xi} \to \distmod_{x\xi} \subset \bivmod{\galg}$
with~$\vder_\xi H^{(n)}(x-\xi) := - H^{(n+1)}(x-\xi)$, via the \mbox{$\galg_2$-basis}
$\mathcal{B}_x$. The resulting
maps~$\vder_x, \vder_\xi\colon \distmod_{x-\xi}{\galg} \to \bivmod{\galg}$ are now combined with the
existing derivations~$\vder_x, \vder_\xi\colon \distmod_{x\xi} \to \bivmod{\galg}$ on the tensorial
distribution modules into the canonical derivations on the direct
sum~$\vder_x, \vder_\xi\colon \bivmod{\galg} \to \bivmod{\galg}$. It is clear
that~$(\bivmod{\galg}, \vder_x, \vder_\xi)$ is then a duplex differential module just
over~$\galg_2$, although~$\bivmod{\galg}$ is a module over the duplex differential
ring~$(\pcwext_{x\xi}{\galg}, \der_x, \der_\xi)$; this is completely analogous to the univariate
structures~$(\distmod{\galg}, \vder)$ and~$(\pcwext{\galg}, \der)$.

For defining\footnote{The standard approach uses the
  isomorphism~$\distmod_2{\galg} \cong \big( \distmod_{x\xi}{\galg} \oplus \hdistmod_{x-\xi}{\galg}
  \big) / (0 \oplus \hat{Z})$
  from~\cite[\S VI.6.14]{MacLaneBirkhoff1968}, defining a Rota-Baxter operator~$\wcum^x$
  on~$\hdistmod_{x-\xi}{\galg} := \pcwext_{x\xi}{\galg} \otimes_K \slimmod{K}$ and hence on the
  numerator, then proving that~$0 \oplus \hat{Z}$ is invariant under~$\wcum^x$ so that~$\vcum^x$ is
  the induced map on the quotient. We bypass this laborious procedure by using the
  bases~$\mathcal{B}_x$ and~$\mathcal{B}_\xi$, proving the Rota-Baxter axiom directly in
  Proposition~\ref{prop:pcw-biv}.} the \emph{Rota-Baxter operators}~$\vcum^x$ and~$\vcum^\xi$
on~$\bivmod{\galg}$, it suffices to define them on the diagonal summand~$\distmod_{x-\xi}{\galg}$,
using the existing Rota-Baxter operators on the tensorial summand $\distmod_{x\xi}{\galg}$. Thus we
define first~$\vcum^x\colon \distmod_{x-\xi}{\galg} \to \bivmod{\galg}$ using the
$\galg$-basis~$\mathcal{B}_\xi$ of~$\distmod_{x-\xi}{\galg}$,
and~$\vcum^\xi\colon \distmod_{x-\xi}{\galg} \to \bivmod{\galg}$ using~$\mathcal{B}_x$. For the
former, we view~$f(\xi) \, H(\xi-a)$ with~$f(\xi) \in \galg_\xi$ as constants, for the
latter~$g(x) \, H(x-a)$ with~$g(x) \in \galg_x$. Hence it suffices to define~$\vcum^x$ for elements
of the form~$f(x) \, H^{(n)}(x-\xi)$ with~$f(x) \in \galg_x$ and likewise~$\vcum^\xi$ for
elements~$g(\xi) \, H^{(n)}(x-\xi)$ with~$g(\xi) \in \galg_\xi$. As in the univariate case, we give
a recursive definition. In analogy to~\eqref{eq:pcw-def1}--\eqref{eq:pcw-def2} in their second form,
the base case~$n = 0$ is
\begin{align}
  \label{eq:biv-pcw-def1}
  \vcum^x f(x) \, H(x-\xi) &:= \Big( \cum_{\!\xi}^x \, f(x) \Big) \, H(x-\xi) + \Big( \cum^\xi
                             f(x) \Big) \bar{H}_0(\xi),\\
  \label{eq:biv-pcw-def2}
  \vcum^\xi\, g(\xi) \, H(x-\xi) &:= \Big( \cum_{\!x}^\xi \, g(\xi) \Big) \, H(x-\xi) + \Big(
                                   \cum^x g(\xi) \Big) H_0(x),
\end{align}
where we abbreviate~$\cum^\xi f(x) := \tau \big( \cum^x f(x) \big) \in \galg_\xi$ and
$\cum^x g(\xi) := \tau \big( \cum^\xi g(\xi) \big) \in \galg_x$. By our usual convention, we have
then~$\smash{\cum_{\!\xi}^x} \, f(x) = (1-\tau) \, \smash{\cum^x} f(x)$
and~$\smash{\cum_{\!x}^\xi} \, g(\xi) = (1-\tau) \, \smash{\cum^\xi} g(\xi)$. Here it is important
to distinguish carefully~$H(x-\xi) = \hat{H} \in \distmod_{x\xi}{\galg}$
and~$H(\xi-x) = 1-\hat{H} \in \distmod_{x\xi}{\galg}$
from~$H_0(\xi) = 1 \otimes H_0 \in \pcwext_{x\xi}{\galg} \subset \distmod_{x\xi}{\galg}$
and~$H_0(x) = H_0 \otimes 1 \in \pcwext_{x\xi}{\galg} \subset \distmod_{x\xi}{\galg}$.  Furthermore,
it should be noted that while the $x$-integral~\eqref{eq:biv-pcw-def1} corresponds
to~\eqref{eq:pcw-def1}, the $\xi$-integral~\eqref{eq:biv-pcw-def2} corresponds
to~\eqref{eq:pcw-def2} since~$H(x-\xi)$ behaves like~$\bar{H}_x(\xi)$ from the $\xi$ perspective;
this is the reason for having~$H_0(x)$ in~\eqref{eq:biv-pcw-def2} as opposed to~$\bar{H}_0(\xi)$
in~\eqref{eq:biv-pcw-def1}.

Before returning to the definition
of~$\vcum^x, \vcum^\xi\colon \distmod_{x\xi}{\galg} \to \bivmod{\galg}$, we present a bivariate
analog of Proposition~\ref{prop:pcw}, introducing the \emph{diagonal piecewise extension} as the
$\pcwext_{x\xi}{\galg}$-submodule
\begin{equation}
  \label{eq:diagpcwext}
  \pcwext_{x-\xi}{\galg} := \frac{\pcwext_{x\xi}{\galg} \otimes_K K \! \hat{H}}{\hat Z}
  \: \subset \: \distmod_{x-\xi}{\galg},
\end{equation}
where $K \! \hat{H} \subset \slimmod{K}$ is the $K$-subspace generated
by~$\hat{H} = H(x-\xi) \in \slimmod{K}$. It is clear that~$\pcwext_{x-\xi}{\galg}$ is free
over~$\galg_2$ with
basis~$\mathcal{B}_x^0 := \{ H(x-\xi), H_a(x) \, H(x-\xi) \mid a \in K \} \subset \mathcal{B}_x$ or
again
alternatively $\mathcal{B}_\xi^0 := \{ H(x-\xi), H_a(\xi) \, H(x-\xi) \mid a \in K \} \subset
\mathcal{B}_\xi$.

We note that both~$\pcwext_{x\xi}{\galg}$ and~$K \! \hat{H}$ are endowed with a multiplication but
unlike the former, $K \! \smash{\hat{H}}$ is a \emph{nonunitary} $K$-algebra. In fact, its
unitarization is just the slim piecewise extension $\smash{\slimext{K}} = K \oplus K \! \hat{H}$. At
any rate, the numerator of~\eqref{eq:diagpcwext} is naturally a nonunitary $K$-algebra, and it turns
out that the whole quotient module is as well.

\begin{lemma}
  The diagonal piecewise extension~$\pcwext_{x-\xi}{\galg}$ is a nonunitary $K$-algebra.
\end{lemma}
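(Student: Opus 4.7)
The plan is to equip the numerator $N := \pcwext_{x\xi}{\galg} \otimes_K K\hat{H}$ with a nonunitary commutative $K$-algebra structure, verify that $\hat{Z}$ is an ideal of $N$, and conclude that the quotient inherits the algebra structure.

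First, I would observe that $N$ is naturally a commutative nonunitary $K$-algebra: $\pcwext_{x\xi}{\galg}$ is a commutative unitary $K$-algebra, while $K\hat{H}$ is a one-dimensional commutative nonunitary $K$-algebra with multiplication $\hat{H}^2 = \hat{H}$, reflecting the idempotency of the Heaviside function $H(x-\xi)$. The tensor product then carries the bilinear multiplication
\[
(p \otimes \lambda\hat{H})(q \otimes \mu\hat{H}) := pq \otimes \lambda\mu\,\hat{H},
\]
which is manifestly associative and commutative. Since $K\hat{H}$ has no identity, neither does $N$.

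Second, the crucial step is to show that $\hat{Z}$ is an ideal of $N$. Since $N$ is commutative and $\hat{Z}$ is already a $\pcwext_{x\xi}{\galg}$-submodule by construction, it suffices to verify that $\hat{H} \cdot g_a \in \hat{Z}$ for each generator $g_a := (H_a(x) - H_a(\xi))\hat{H} - H_a(x)\bar{H}_a(\xi)$. The term $H_a(x)\bar{H}_a(\xi)$, which a priori sits in $\pcwext_{x\xi}{\galg}$, must be reinterpreted in $N$ via the canonical embedding $\phi \mapsto \phi \otimes \hat{H}$ (consistent with the role of $\hat{H}$ as a ``local identity'' in $\pcwext_{x-\xi}{\galg}$). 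With this identification, $g_a$ simplifies in $N$ to $-H_a(\xi)\bar{H}_a(x)\hat{H}$, and multiplying by $\hat{H}$ using $\hat{H}^2 = \hat{H}$ reproduces $g_a$ itself, confirming that $\hat{Z}$ is indeed an ideal.

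Third, by the standard quotient construction for algebras, $\pcwext_{x-\xi}{\galg} = N/\hat{Z}$ inherits a well-defined commutative nonunitary $K$-algebra structure from $N$.

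The main obstacle is the ambient-space reconciliation in step two: the submodule $\hat{Z}$ as given in Definition~\ref{def:diagdist} lives in the larger tensor product $\pcwext_{x\xi}{\galg} \otimes_K \slimmod{K}$, whereas the numerator $N$ is strictly smaller. Making the canonical embedding $\phi \mapsto \phi \otimes \hat{H}$ precise---and verifying that it is compatible both with the $\pcwext_{x\xi}{\galg}$-module structure and with the tensor-product multiplication---requires some care. Once this is done, however, the ideal property reduces to a single application of idempotency, and the $K$-algebra structure on $\pcwext_{x-\xi}{\galg}$ follows immediately.
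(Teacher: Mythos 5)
Your proof is correct and is in substance the paper's own: the entire content of the paper's argument is the verification that $\hat{Z}$ is an ideal of the numerator $\pcwext_{x\xi}{\galg} \otimes_K K\hat{H}$, reduced---since that ring is generated over $\pcwext_{x\xi}{\galg}$ by $\hat{H}$ and $\hat{Z}$ is a $\pcwext_{x\xi}{\galg}$-submodule---to checking the product of $\hat{H}$ with each generator, using $\hat{H}^2=\hat{H}$ and the idempotency of the Heavisides; the quotient then inherits the multiplication. The one divergence is how the mixed term $H_a(x)\,\bar{H}_a(\xi)$ is handled. The paper leaves the generator $\gamma_a = \big(H_a(x)-H_a(\xi)\big)\hat{H} - H_a(x)\,\bar{H}_a(\xi)$ untouched, multiplies it by $\hat{H}$, and recognizes the result $-\bar{H}_a(x)\,H_a(\xi)\,\hat{H}$ as the multiple $\bar{H}_a(x)\,\gamma_a \in \hat{Z}$, so no re-embedding is needed. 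Your reinterpretation of $H_a(x)\,\bar{H}_a(\xi)$ as $H_a(x)\,\bar{H}_a(\xi)\otimes\hat{H}$ amounts to replacing $\gamma_a$ by exactly this multiple $\bar{H}_a(x)\,\gamma_a = -\bar{H}_a(x)\,H_a(\xi)\,\hat{H}$, after which stability under $\hat{H}$ is immediate; this is a reasonable way of coping with the fact that $\gamma_a$ does not literally lie in the stated numerator (an ambiguity the paper glosses over), but it should be presented only as a choice of generators, not as an algebra identification $\phi\mapsto\phi\otimes\hat{H}$ of $\pcwext_{x\xi}{\galg}$ with a subring of the numerator: $\hat{H}$ is not a unit in $\pcwext_2{\galg}$, and a blanket identification of this kind would, for instance, send $H(\xi-x)=1-\hat{H}$ to $\hat{H}-\hat{H}^2=0$. (Incidentally, your side remark that $K\hat{H}$ ``has no identity'' is not quite accurate---$\hat{H}$ is an internal identity for $K\hat{H}$, and $1\otimes\hat{H}$ for the numerator; ``nonunitary'' here only means the algebra is not required to contain the unit $1$ of $\pcwext_2{\galg}$, which does not affect the substance of the lemma.) With these caveats, the computational content of your proof and the paper's coincide.
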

\begin{proof}
  It suffices to prove that~$\hat{Z}$ is an ideal in the nonunitary
  ring~$\pcwext_{x\xi}{\galg} \otimes_K K \! \hat{H}$. Hence let us take an arbitrary
  $\pcwext_{x\xi}{\galg}$-generator~$\gamma_a := H_a(x) \, \hat{H} - H_a(\xi) \, \hat{H} - H_a(x) \,
  \bar{H}_a(\xi) \in \hat{Z}$
  and show~$\gamma_a \hat{Z} \subseteq \hat{Z}$.
  Since~$\pcwext_{x\xi}{\galg} \otimes_K K \! \hat{H}$ is generated over~$\pcwext_{x\xi}{\galg}$
  by~$\hat{H}$, we need only verify~$\gamma_a \hat{H} \in \hat{Z}$. One checks immediately that
  $\gamma_a \smash{\hat{H}} = - \bar{H}_a(x) \, H_a(\xi) \, \smash{\hat{H}} = \gamma_a \bar{H}_a(x)
  \in \smash{\hat{Z}}$.
\end{proof}

\begin{wrapfigure}[6]{r}{0.36\textwidth}
\centering\vspace{-1.5em}

$\xymatrix @M=0.5pc @=2.5pc @C=-0.5pc {
  \distmod_2{\galg} & = & \distmod_{x\xi}{\galg} & \oplus & \distmod_{x-\xi}{\galg}\\
  \pcwext_2{\galg}\ar@{^(->}[u] & = & \pcwext_{x\xi}{\galg}\ar@{^(->}[u] & \oplus & 
  \pcwext_{x-\xi}{\galg}\ar@{^(->}[u] }$
\end{wrapfigure}

In analogy to Definition~\ref{def:bivdistmod}, the \emph{bivariate piecewise
  \mbox{extension}}~$\pcwext_2{\galg} := \pcwext_{x\xi}{\galg} \oplus \pcwext_{x-\xi}{\galg}$ is
a~$\pcwext_{x\xi}{\galg}$-module consisting of tensorial and diagonal components. But we may also
view~$\pcwext_{x-\xi}{\galg}$ as a nonunitary algebra over~$\pcwext_{x\xi}{\galg}$, and as such its
unitarization is~$\pcwext_2{\galg}$. Therefore the latter is \mbox{naturally} a (unitary)
$\pcwext_{x\xi}{\galg}$-algebra. It is free over~$\galg_2$ with
basis~$\mathcal{B} \cup \mathcal{B}_x^0 \subset \mathcal{B}_x$ or equivalently with
basis~$\mathcal{B} \cup \mathcal{B}_\xi^0 \subset \mathcal{B}_\xi$. Moreover, it is clear
that~\eqref{eq:biv-pcw-def1}--\eqref{eq:biv-pcw-def2} restrict to yield
operators~$\smash{\vcum^x}, \smash{\vcum^\xi}\colon \pcwext_2{\galg} \to \pcwext_2{\galg}$, which
turn out to be Rota-Baxter operators. Thus we obtain the following partial bivariate analog to
Proposition~\ref{prop:pcw}.

\begin{proposition}
  \label{prop:pcw-biv}
  Let~$(\galg, \cum)$ be an ordinary shifted Rota-Baxter algebra over an ordered
  field~$K$. Then~$(\pcwext_2{\galg}, \vcum^x, \vcum^\xi)$ is a duplex Rota-Baxter algebra that
  extends~$(\galg_2, \cum^x, \cum^\xi)$.
\end{proposition}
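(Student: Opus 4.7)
The strategy is to mirror the proof of Proposition~\ref{prop:pcw}, extended to accommodate both the direct-sum structure $\pcwext_2{\galg} = \pcwext_{x\xi}{\galg} \oplus \pcwext_{x-\xi}{\galg}$ and the duplex pair of Rota-Baxter operators. First, I would verify the extension claim: the inclusion $\galg_2 \hookrightarrow \pcwext_2{\galg}$ is an algebra embedding, since the tensorial summand $\pcwext_{x\xi}{\galg} = \pcwext_x{\galg} \otimes_\galg \pcwext_\xi{\galg}$ contains $\galg_2$ via $1$-basis elements, while the diagonal summand $\pcwext_{x-\xi}{\galg}$ is a nonunitary ideal (by the lemma just proven) whose unitarization combines with $\pcwext_{x\xi}{\galg}$ into a (unitary) $\galg_2$-algebra. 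The operators $\vcum^x, \vcum^\xi$ restrict to $\cum^x, \cum^\xi$ on $\galg_2$ by inspection of \eqref{eq:biv-pcw-def1}--\eqref{eq:biv-pcw-def2}, as these formulas reduce to the ordinary tensor-product Rota-Baxter action when no Heaviside factor is present.

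By the exchange automorphism $\tau$ conjugating the $x$- and $\xi$-structures (as recalled just before Definition~\ref{def:pure-distmod}), it suffices to verify the weak Rota-Baxter axiom for $\vcum^x$. I would then invoke the polarization identity (valid since $\mathrm{char}\,K = 0$), reducing the axiom to $(\vcum^x \alpha)^2 = 2\, \vcum^x \alpha \, \vcum^x \alpha$ for all $\alpha \in \pcwext_2{\galg}$. Using $K$-bilinearity of this statement in the $\galg_2$-basis $\mathcal{B} \cup \mathcal{B}_x^0$ of $\pcwext_2{\galg}$, I decompose $\alpha = \alpha_0 + \alpha_1$ with $\alpha_0 \in \pcwext_{x\xi}{\galg}$ and $\alpha_1 \in \pcwext_{x-\xi}{\galg}$, and handle three kinds of cross terms in the expansion of the square.

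For the pure tensorial case $\alpha_0 = f H_a(x) H_b(\xi)$, since $\vcum^x$ acts only on the $x$-component and treats $H_b(\xi)$ as a $\galg_\xi$-valued constant, the required identity follows from Proposition~\ref{prop:pcw} applied to the univariate piecewise extension $\pcwext_x{\galg}$ taken over the coefficient algebra $\galg_\xi$. For the pure diagonal case $\alpha_1 = f(x) H(x-\xi)$, I would use formula \eqref{eq:biv-pcw-def1} together with the idempotence $H(x-\xi)^2 = H(x-\xi)$ and the relation $H(x-\xi) \cdot \bar{H}_0(\xi) H_a(\xi) = 0$-style products that follow from \eqref{eq:biv-heaviside}, reducing the verification to the Rota-Baxter axiom of the shifted operator $\cum_\xi^x = (1-\tau)\cum^x$ on $\galg_2$; the latter holds because $\cum^x$ is itself a Rota-Baxter operator and shifting by a character preserves the axiom. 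For the mixed cross terms the relation \eqref{eq:biv-heaviside} must first be used to rewrite products such as $H_a(x) H(x-\xi)$ before applying $\vcum^x$.

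The main obstacle will be the mixed cross-term case: keeping track of the various splittings when $\vcum^x$ is applied to products combining tensorial Heavisides $H_a(x), H_b(\xi)$ with the diagonal Heaviside $H(x-\xi)$. Here the split point $0 \in K$ that appears in \eqref{eq:biv-pcw-def1}--\eqref{eq:biv-pcw-def2} interacts nontrivially with the points $a$ and the symbolic ``point'' $\xi$, and one must invoke both the generic shifted-Rota-Baxter identities \eqref{eq:gen-rel} and the defining quotient relation of $\hat{Z}$ to absorb the discrepancies. As in Proposition~\ref{prop:pcw}, this boils down to case distinctions on the signs of the various positions; the end result is a series of cancellations analogous to those exhibited in the proof of Proposition~\ref{prop:pcw}, now with $\xi$ playing the role of a second algebraic ``evaluation point''.
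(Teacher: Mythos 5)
Your overall architecture is the paper's: verify the extension claim from the definitions, use the $x\leftrightarrow\xi$ symmetry of~\eqref{eq:biv-pcw-def1}--\eqref{eq:biv-pcw-def2} to reduce to~$\vcum^x$, dispose of the purely tensorial pairs by the univariate result, and check the remaining pairs involving~$H(x-\xi)$ by computation. But one step, as written, does not go through: after polarizing to the quadratic identity~$(\vcum^x\alpha)^2 = 2\,\vcum^x \alpha\,\vcum^x\alpha$ you propose to verify it ``in the $\galg_2$-basis'', i.e.\ on monomials. A quadratic identity cannot be reduced to basis elements by linearity; polarization only helps if you establish it for \emph{arbitrary}~$\alpha$. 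Since the Rota-Baxter defect in~\eqref{eq:rb-axiom} is already symmetric and $K$-bilinear, the correct reduction (and the one the paper uses) is simply to check the bilinear axiom on pairs of spanning elements -- your ``three kinds of cross terms'' remark shows you intend exactly this, so the fix is to drop the polarization detour altogether and phrase the case split as the paper does: view~$\pcwext_2{\galg}$ as a free $\pcwext_\xi{\galg}$-module with basis~$\{b(x),\, b(x)H_a(x),\, b(x)H(x-\xi)\}$, giving six pair-cases of which the three without~$H(x-\xi)$ are already covered by~$(\pcwext_{x\xi}{\galg}, \vcum^x)$ being a Rota-Baxter ring.

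On the two remaining cases your prediction of the difficulty is off, though not fatally. The paper subsumes~$g(x)$ and~$g(x)H_a(x)$ into a single case~$g \in \pcwext_x{\galg}$ (the $\xi$-Heavisides being constants for~$\vcum^x$), and then both the diagonal--univariate case ($f = f(x)H(x-\xi)$, $g \in \pcwext_x{\galg}$) and the diagonal--diagonal case ($f = f(x)H(x-\xi)$, $g = g(x)H(x-\xi)$) are settled by generic computations: a few applications of~\eqref{eq:biv-pcw-def1} followed by the Rota-Baxter axiom of~$\pcwext_x{\galg}$ (i.e.\ Proposition~\ref{prop:pcw}) suitably combined with the action of~$\tau$. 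No sign case distinctions, no appeal to the identities~\eqref{eq:gen-rel}, and no explicit rewriting through the relations of~$\hat{Z}$ are needed; your plan of first expanding~$H_a(x)H(x-\xi)$ via~\eqref{eq:biv-heaviside} and then splitting on signs of the positions would work in principle but makes the verification considerably heavier than the paper's. Finally, your reduction of the diagonal--diagonal case to the Rota-Baxter property of~$\cum_{\!\xi}^x = (1-\tau)\cum^x$ is in the right spirit, but note that~$\tau$ is not a character into~$K$; the standard ``initialization at a character'' argument applies only because~$\tau$ maps~$\galg_x$ multiplicatively into the constants~$\galg_\xi$ of~$\cum^x$, over which~$\cum^x$ is linear -- this should be said, and in any case the actual cancellations (including the extra~$H_0(\xi)\,H(x-\xi)$ and~$\bar{H}_0(\xi)$ terms) still have to be exhibited, which your sketch leaves undone.
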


\begin{proof}
  The extension property is immediate from the definition of~$\vcum^x$ and~$\vcum^\xi$.
  Since~\eqref{eq:biv-pcw-def1}--\eqref{eq:biv-pcw-def2} are symmetric under exchange of~$x$
  and~$\xi$, it suffices to verify the Rota-Baxter axiom for~$\vcum^x$, say. For this purpose it
  will be proficient to view~$\pcwext_2{\galg}$ as a free module over~$\pcwext_\xi{\galg}$ having
  the basis~$\{ b(x), b(x) \, H_a(x), b(x) \, H(x-\xi) \mid a \in K, \; b(x) \in \mathcal{B}_0 \}$,
  where~$\mathcal{B}_0$ is an arbitrary but fixed \mbox{$K$-basis} of~$\galg_x$. Since the
  Rota-Baxter~\eqref{eq:rb-axiom} axiom is bilinear and symmetric in the arguments~$f$ and~$g$, it
  suffices to let both arguments range over the basis given above. This leads to $3 + 2 + 1 = 6$
  cases. The three cases without~$H(x-\xi)$ are covered since~$(\pcwext_{x\xi}{\galg}, \vcum^x)$ is
  a Rota-Baxter ring. Hence it suffices to consider~$f = f(x) \, H(x-\xi)$ for
  arbitrary~$f(x) \in \galg_x$ in conjunction with the
  cases~$g = g(x), g(x) \, H_a(x), g(x) \, H(x-\xi)$ for arbitrary~$g(x) \in \galg_x$. We can
  subsume the first two cases for~$g$ by allowing an arbitrary~$g(x) \in \pcwext_x{\galg}$.

  Let us start with the diagonal-univariate case~$f = f(x) \, H(x-\xi)$ and~$g \in \pcwext_x{\galg}$
  of the Rota-Baxter axiom~\eqref{eq:rb-axiom}. Using the defining equation~\eqref{eq:biv-pcw-def1},
  the left-hand side~$\vcum^x f \cdot \vcum^x g$ is given by
  \begin{equation}
    \label{eq:rb-bivpcwext-lhs}
    \Big( \cum_{\!\xi}^x \, f(x) \cdot \cum^x g(x) \Big) \, H(x-\xi)
    + \Big( \cum^\xi f(x) \cdot \cum^x g(x) \Big) \, \bar{H}_0(\xi)
  \end{equation}
  where~$\cum^x$ denotes the Rota-Baxter operator of~$\pcwext_x{\galg}$ and $\cum_{\!\xi}^x$ is as
  defined after~\eqref{eq:biv-pcw-def2}. Likewise, a single application of~\eqref{eq:biv-pcw-def1}
  determines the term~$\vcum^x f \, \vcum^x g$ on the right-hand side of~\eqref{eq:rb-axiom} as
  \begin{equation*}
     \Big( \cum_{\!\xi}^x \, f(x) \, \cum^x g(x) \Big) \, H(x-\xi)
     + \Big( \cum^\xi f(x) \, \cum^x g(x) \Big) \, \bar{H}_0(\xi) .
  \end{equation*}
  Thus it remains to compute the other term~$\vcum^x g \, \vcum^x f$ on the right-hand side
  of~\eqref{eq:rb-axiom}. Here we invoke~\eqref{eq:biv-pcw-def1} three times to get
  \begin{equation*}
      \Big( \cum_{\!\xi}^x \, g(x) \, \cum^x f(x)
      - \cum^\xi f(x) \cdot \cum_{\!\xi}^x \, g(x) \Big) \, H(x-\xi)
      + \Big( \cum^\xi g(x) \, \cum^x f(x)
      + \cum^\xi f(x) \cdot \cum_{\!\xi}^x \, g(x) \Big) \, \bar{H}_0(\xi)
  \end{equation*}
  Adding the last two equations yields~\eqref{eq:rb-bivpcwext-lhs} as one sees by a straightforward
  calculation using the Rota-Baxter axiom of~$\pcwext_x{\galg}$, suitably combined with the action
  of~$\tau$. Hence the Rota-Baxter axiom~\eqref{eq:rb-axiom} is verified in this case.

  We are left with the diagonal-diagonal case~$f = f(x) \, H(x-\xi)$ and~$g = g(x) \, H(x-\xi)$,
  which is of a more symmetric nature. Calculating according to~\eqref{eq:biv-pcw-def1}, we obtain
  for~$\vcum^x f \, \vcum^x g$ the expression
  \begin{equation*}
    \Big( \cum_{\!\xi}^x \, f(x) \, \cum^x g(x) - \cum_{\!\xi}^x \, f(x) \cdot \cum^\xi g(x)
    + \bar{H}_0(\xi) \, \cum_{\!\xi}^x \, f(x) \cdot \cum^\xi g(x) \Big) \, H(x-\xi)
     + \Big( \cum^\xi f(x) \, \cum^x g(x) \Big) \, \bar{H}_0(\xi) ,
  \end{equation*}
  which combined with its symmetric counterpart~$\vcum^x g \: \vcum^x f$ yields
  \begin{align*}
    \quad \Big( & \cum^x f(x) \cdot \cum^x g(x) - \cum^\xi f(x) \cdot \cum^\xi g(x) \Big) \,
                  H(x-\xi)  + \Big( \cum^\xi f(x) \cdot \cum^\xi g(x) \Big) \, \bar{H}_0(\xi)\\
                & {} + \Big( 2 \, \cum^\xi f(x) \cdot \cum^\xi g(x) - \cum^x f(x) \cdot \cum^\xi g(x) -
                  \cum^\xi f(x) \cdot \cum^x g(x) \Big) \, H_0(\xi) \, H(x-\xi) ,
  \end{align*}
  where we have again applied the Rota-Baxter axiom of~$\pcwext_x{\galg}$ for merging nested
  integrals. Expanding the product~$\vcum^x f \cdot \vcum^x g$ one confirms that this indeed
  coincides with the expression above, so the Rota-Baxter axiom~\eqref{eq:rb-axiom} is again
  verified.
\end{proof}

We return now to the definition of the Rota-Baxter operators~$\vcum^x$ and~$\vcum^\xi$ on the
bivariate distribution module~$\distmod_{x\xi}{\galg}$, which is in fact dictated by the Rota-Baxter
axiom for modules. Having settled the base case in~\eqref{eq:biv-pcw-def1}--\eqref{eq:biv-pcw-def2},
we apply the reasoning of Remark~\ref{rem:alt-der-distmod} to continue the definition by setting
\begin{align}
  \label{eq:biv-dist-def1}
  \vcum^x f(x) \, H^{(n+1)}(x-\xi) &:= f(x) \, H^{(n)}(x-\xi) - \vcum^x f'(x) \, H^{(n)}(x-\xi),\\
  \label{eq:biv-dist-def2}
  {}-\vcum^\xi\, g(\xi) \, H^{(n+1)}(x-\xi) &:= g(\xi) \, H^{(n)}(x-\xi)  -
                                           \vcum^\xi g'(\xi) \, H^{(n)}(x-\xi)
\end{align}
for all~$f(x) \in \galg_x, g(\xi) \in \galg_\xi$ and~$n \in \NN$. Note the distinct sign
in~\eqref{eq:biv-dist-def2}, due to the fact that~$\vder_\xi = - \vder_x$ on the diagonal
distribution module~$\distmod_{x-\xi}{\galg}$. We obtain now the following kind of analog to
Theorem~\ref{thm:intdiff-mod-shifted}.

\begin{theorem}
  \label{thm:biv-distmod}
  Let~$(\galg,\der, \cum)$ be an ordinary shifted integro-differential algebra. Then the bivariate
  distribution module~$(\distmod_2{\galg}, \vder_x, \vder_\xi, \smash{\vcum^x}, \smash{\vcum^\xi})$
  is a duplex differential Rota-Baxter module containing two isomorphic
  copies $(\distmod_x{\galg}, \vder_x, \smash{\vcum^x})$
  and~$(\distmod_\xi{\galg}, \vder_\xi, \smash{\vcum^\xi})$ of the given~$(\galg,\der, \cum)$. As a
  duplex Rota-Baxter module, $\distmod_2{\galg}$ extends~$\pcwext_2{\galg}$.
\end{theorem}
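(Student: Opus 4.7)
The plan is to verify the duplex differential Rota-Baxter module axioms on the direct-sum decomposition $\distmod_2{\galg} = \distmod_{x\xi}{\galg} \oplus \distmod_{x-\xi}{\galg}$ by treating each summand separately, with care to track how the operators cross between the two parts. For each direction $x$ or $\xi$ the required axioms reduce to the section axiom $\vder \circ \vcum = 1$ and the module Rota-Baxter axiom over~$\galg_2$; the isomorphic copies and the extension of~$\pcwext_2{\galg}$ then follow by inspection.

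On the tensorial summand $\distmod_{x\xi}{\galg}$ the differential Rota-Baxter module structure in each direction is inherited from the univariate construction. Specifically, for the $x$-direction I would invoke a parameter-family version of Theorem~\ref{thm:dist-drbm}, regarding the $\xi$-dependent factors as constants: the summand $\distmod_x{\galg} \otimes_\galg \pcwext_\xi{\galg}$ carries the $x$-structure through its $\distmod_x{\galg}$ factor, while $\pcwext_x{\galg} \otimes_\galg \distmod_\xi{\galg}$ inherits the $x$-structure from its $\pcwext_x{\galg}$ factor (Proposition~\ref{prop:pcw}); the two are compatible on the overlap $\pcwext_{x\xi}{\galg}$. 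The $\xi$-direction is dual.

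On the diagonal summand $\distmod_{x-\xi}{\galg}$ the operators have been prescribed through the bases $\mathcal{B}_x, \mathcal{B}_\xi$ by the recursive formulas~\eqref{eq:biv-pcw-def1}--\eqref{eq:biv-dist-def2}. For each direction the verification proceeds in two layers. First, the section axiom is handled by induction on the order $n$ of $H^{(n)}(x-\xi)$: the step $n \to n+1$ follows directly from~\eqref{eq:biv-dist-def1} and mirrors the corresponding induction in the proof of Theorem~\ref{thm:dist-drbm}, while the base case $n = 0$ reduces to differentiating~\eqref{eq:biv-pcw-def1}, where the cross-term $(\cum^\xi f(x)) \, \bar{H}_0(\xi)$ lives in $\galg_\xi \otimes_\galg \pcwext_\xi{\galg} \subset \ker \vder_x$ and is thus annihilated. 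Second, the module Rota-Baxter axiom likewise proceeds by induction on~$n$, its base case coinciding with the Rota-Baxter axiom of the duplex algebra $\pcwext_2{\galg}$ already established in Proposition~\ref{prop:pcw-biv}.

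The containment of $(\distmod_x{\galg}, \vder_x, \vcum^x)$ and $(\distmod_\xi{\galg}, \vder_\xi, \vcum^\xi)$ as isomorphic copies then follows from the natural embeddings $\phi \mapsto \phi \otimes 1$ and $\phi \mapsto 1 \otimes \phi$ into the tensorial summand, which clearly respect the direction-appropriate operators; and the extension of $\pcwext_2{\galg}$ as a duplex Rota-Baxter submodule is built in, since the defining formulas of $\vcum^x, \vcum^\xi$ on the diagonal part take values in $\pcwext_2{\galg}$ when restricted to $\pcwext_{x-\xi}{\galg}$, with the axioms already guaranteed by Proposition~\ref{prop:pcw-biv}. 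The main obstacle I anticipate lies in the module Rota-Baxter axiom on the diagonal summand when the arguments mix tensorial and diagonal terms: $\vcum^x$ and $\vcum^\xi$ produce cross components via the factors $\bar{H}_0(\xi)$ and $H_0(x)$, and verifying that the resulting expansions balance requires repeated recourse to the relation $\hat{Z}$ (equivalently $\bar{H}_a(x) \, H_a(\xi) \, H(x-\xi) = 0$) in order to keep the left-focused and right-focused basis expansions consistent. This amounts to a more elaborate version of the case analysis already carried out in Proposition~\ref{prop:pcw-biv} and should, in the end, succumb to the same pattern of calculation.
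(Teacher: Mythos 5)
Your overall strategy is the same as the paper's: decompose $\distmod_2{\galg} = \distmod_{x\xi}{\galg} \oplus \distmod_{x-\xi}{\galg}$, inherit the structure on the tensorial summand from the univariate results (the paper quotes Theorem~\ref{thm:intdiff-mod-shifted}, treating the foreign tensor factor as constants), and on the diagonal summand prove the module Rota-Baxter axiom~\eqref{eq:mod-rb-axiom} by induction on the order $n$ of $H^{(n)}(x-\xi)$, with base case $n=0$ supplied by Proposition~\ref{prop:pcw-biv} and the induction step driven by~\eqref{eq:biv-dist-def1}; the isomorphic copies and the extension of $\pcwext_2{\galg}$ are read off the construction, as you say. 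One of your anticipated difficulties is also illusory: in~\eqref{eq:mod-rb-axiom} the first argument ranges over $\galg_2$ only, so after the paper's reductions ($f \in \galg_x$, the $H_a(\xi)$ constant for $\vcum^x$) the mixed tensorial/diagonal cases you worry about never occur beyond what Proposition~\ref{prop:pcw-biv} already settled.

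The genuine gap is your claimed verification of the section axiom $\vder_x \circ \vcum^x = 1$ on the diagonal summand. In the base case you dispose only of the cross-term $(\cum^\xi f(x))\,\bar{H}_0(\xi)$, but applying $\vder_x$ (a derivation over $\galg_2$, with $\vder_x H(x-\xi) = \delta(x-\xi)$) to the first term of~\eqref{eq:biv-pcw-def1} produces by the Leibniz rule the additional summand $\big(\cum_{\!\xi}^x f(x)\big)\,\delta(x-\xi)$. Analytically this vanishes by sifting, but algebraically it does not: the submodule $\hat{Z}$ of Definition~\ref{def:diagdist} involves only $\hat{H}$ and is expressly not differentially generated, and $\delta(x-\xi)$ is a free $\galg_2$-basis element of $\mathcal{B}_\xi$, so already for $f=1$ the term $\big(\cum_{\!\xi}^x 1\big)\,\delta(x-\xi)$ is nonzero in $\distmod_{x-\xi}{\galg}$. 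Hence the section axiom fails on the diagonal summand and cannot be proved; the step you sketch would break down exactly there. The paper never asserts it: the notion of (duplex) differential Rota-Baxter module used in Theorem~\ref{thm:biv-distmod} requires only the Leibniz rule over $\galg_2$ and the module Rota-Baxter axiom~\eqref{eq:mod-rb-axiom}, and these are precisely what its proof checks. So you should drop the section-axiom layer of your argument (or restrict any such claim to the tensorial summand, where Theorem~\ref{thm:dist-drbm} does provide it); the rest of your outline then coincides with the paper's proof.
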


\begin{proof}
  It is obvious from the construction of~$\distmod_{x\xi}{\galg}$ that it contains the two
  isomorphic copies $(\distmod_x{\galg}, \vder_x, \smash{\vcum^x})$
  and~$(\distmod_\xi{\galg}, \vder_\xi, \smash{\vcum^\xi})$, so
  clearly~$\distmod_2{\galg} \supset \distmod_{x\xi}{\galg}$ contains them as well. Furthermore, the
  Rota-Baxter module extension~$\distmod_2{\galg} \supset \pcwext_2{\galg}$ is immediate from the
  definition of~$\vcum^x$ and~$\vcum^\xi$.

  By symmetry, it suffices to consider the other claims for~$\vcum^x, \vder^x$, say. As mentioned
  after Definition~\ref{def:bivdistmod}, $(\distmod_2, \vder_x, \vcum^x)$ is a differential module
  over~$\galg_2$ and hence over~$K$. We check now that it is also a Rota-Baxter module
  over~$\galg_2$, meaning it satisfies the module Rota-Baxter axiom
  \begin{equation}
    \label{eq:mod-rb-axiom}
    \cum^x f \cdot \vcum^x \phi = \vcum^x f \, \vcum^x \phi + \vcum^x (\cum^x f) \phi
  \end{equation}
  for all~$f \in \galg_2$ and~$\phi \in \distmod_2{\galg}$. Since both~$\cum^x$ and~$\vcum^x$
  treat~$\galg_\xi < \pcwext_\xi{\galg}$ as constants, it suffices to consider~$f \in \galg_x$.
  Moreover, we may also restrict ourselves
  to~$\phi \in \distmod_x{\galg} \oplus \distmod_{x-\xi}{\galg} < \distmod_2{\galg}$ since~$\vcum^x$
  treats~$\distmod_\xi{\galg}$ as constants (here we
  view~$\distmod_x{\galg} = \distmod_x{\galg} \otimes K < \distmod_{x\xi}{\galg}$). The first case
  $\phi \in \distmod_x{\galg}$ is already settled since we know from
  Theorem~\ref{thm:intdiff-mod-shifted}
  that~$(\distmod_x{\galg}, \vcum^x) \cong (\distmod{\galg}, \vcum)$ is a Rota-Baxter module. Thus
  remains to consider the diagonal case~$\phi \in \distmod_{x-\xi}{\galg}$, and we can use the
  $\galg_2$-basis~$\mathcal{B}_\xi$. By definition, $\vcum^x$ coincides for basis elements
  in~$\mathcal{B} \subset \mathcal{B}_\xi$ with the Rota-Baxter operator~$\cum^x$
  on~$\pcwext_{x\xi}{\galg}$; hence we may restrict ourselves to~$\phi = H^{(n)}(x-\xi)$
  and~$\phi = H_a(\xi) \, H^{(n)}(x-\xi)$ with arbitrary~$n \ge 0$ and~$a \in K$. But the latter
  case follows immediately from the former since the~$H_a(\xi)$ are constants for~$\vcum^x$. We are
  now left to prove~\eqref{eq:mod-rb-axiom} for~$f \in \galg_x$ and~$\phi = H^{(n)}(x-\xi)$, which
  we do by induction on~$n$. The base case~$n=0$ is covered by Proposition~\ref{prop:pcw-biv}, so we
  consider~\eqref{eq:mod-rb-axiom} with~$\phi = H^{(n+1)}(x-\xi)$ for the induction
  step. From~\eqref{eq:biv-dist-def1} we get~$\vcum^x \phi = H^{(n)}(x-\xi)$, so the first summand
  on the right of~\eqref{eq:mod-rb-axiom} cancels the second term of
  expanding~\eqref{eq:biv-dist-def1} with~$\cum^x f(x)$ in place of~$f(x)$; the remaining
  term~$\cum^x f(x) \cdot H^{(n)}(x-\xi)$ equals the left-hand side of~\eqref{eq:mod-rb-axiom}.
\end{proof}

Note that we have not set up shift operators on the diagonal distributions
of~$\distmod_{x-\xi}{\galg}$ since $\bivmsh{x}{a} H(x-\xi) = H(x-\xi-a)$ would take us outside
of~$\distmod_{x-\xi}{\galg}$. While it is certainly possible to set up a larger domain allowing this
(cf.\@ Footnote~\ref{fn:biv-dist}), we do not need it for our present purposes. However, we will
need \emph{evaluation operators}~$\mev^x_a$ and~$\mev^\xi_a$ on~$\distmod_{x-\xi}{\galg}$; since
such operators are already defined on~$\distmod_{x\xi}{\galg}$, this determines~$\mev^x_a$
and~$\mev^\xi_a$ on the bivariate distribution module~$\distmod_2{\galg}$ by linearity. The
intuitive idea is to define~$\mev^x_a\colon \distmod_{x-\xi}{\galg} \to \distmod_\xi{\galg}$ by
analogy to the univariate definition~$\ev_a \, H_\xi := \bar{H}(\xi-a) \in K$
for~$H_\xi \in \pcwext{\galg}$ given earlier (cf.\@ Proposition~\ref{prop:pcw} and the paragraph
above it): Since for evaluation it should not play a role whether one views~$\xi$ as a parameter or
as a variable, we can interpret~$H_\xi$ heuristically as~$H(x-\xi) \in \pcwext_{x-\xi}{\galg}$ and
the right-hand side as~$\bar{H}(\xi-a) = 1 \otimes \bar{H}_a \in \pcwext_\xi{\galg}$. For the
evaluation with respect to~$\xi$, the reasoning is analogous. Hence we give the definitions
\begin{equation}
  \label{eq:def-diag-eval-pcw}
  \mev^x_a \, H(x-\xi) := \bar{H}(\xi-a), \qquad
  \mev^\xi_a \, H(x-\xi) := H(x-a) .
\end{equation}
For evaluating diagonal Diracs, we use again the analogy to our earlier
definition~$\mev_a \, \delta_\xi^{(k)} := 0$ set up earlier (see the paragraph before
Theorem~\ref{thm:intdiff-mod-shifted}). Thus we set
\begin{equation}
  \label{eq:def-diag-eval-dirac}
  \mev^x_a \, \delta^{(k)}(x-\xi) = 0, \qquad
  \mev^\xi_a \, \delta^{(k)}(x-\xi) = 0 ,
\end{equation}
completing the definition of~$\mev^x_a$ and~$\mev^\xi_a$ on the diagonal distribution
module~$\distmod_{x-\xi}{\galg} \subset \distmod_2{\galg}$ and hence yielding evaluation
operators~$\mev^x_a\colon \distmod_2{\galg} \to \distmod_\xi{\galg}$
and~$\mev^\xi_a\colon \distmod_2{\galg} \to \distmod_x{\galg}$.

\section{Application to Boundary Problems}
\label{sec:appl}

As mentioned earlier, the treatment of \emph{boundary problems} (for linear ordinary differential
equations) is a major application area for our algebraic approach to piecewise smooth functions and
Dirac distributions. We refer to \cite{RosenkranzRegensburger2008a,RegensburgerRosenkranz2009} for
basic notions and algorithms in the algebraic theory of boundary problems. Consider a regular
boundary problem~$(T, \bspc)$ over an ordinary shifted integro-differential
algebra~$(\galg, \der, \cum)$ and let~$G := (T, \bspc)^{-1}$ be its Green's operator. Assuming a
well-posed two-point boundary value problem, classical analysis~\cite[\S3]{StakgoldHolst2011} will
inform us that $G$ is an integral
operator~$Gf(x) = \cum_{-\infty}^\infty\, g(x,\xi) \, f(\xi) \, d\xi$ with the so-called Green's
function~$g(x,\xi)$ as its integral kernel. We shall denote the initialization point of~$\cum$
by~$o \in K$ so that~$\cum = \cum_{\!o}$ and~$\evl = \ev_o$.

We distinguish now \emph{three essentially independent applications} of the algebraic theory
developed in Sections~\ref{sec:intdiffalg}--\ref{sec:biv-distributions} to such boundary problems,
which we elaborate in this section:
\begin{enumerate}
\item The Green's function~$g(x,\xi)$ is a (bivariate) piecewise smooth function, usually described
  by a \emph{case distinction}; we would like to express it in the algebraic language of Heaviside
  functions. For ill-posed boundary problems, $g(x, \xi)$ may be a Dirac distribution that we wish
  to express in terms of the distribution module.
\item The very \emph{definition of the Green's function}~$g_\xi(x) := g(x,\xi)$ is typically cast in
  the language of distributions~\cite[(3.3.4)]{StakgoldHolst2011}. Subject to suitable smoothness
  constraints, it is described uniquely by requiring it, as a function of~$x$, to satisfy the
  differential equation~$T g_\xi = \delta_\xi$ and the boundary
  conditions~$\beta(g_\xi) = 0 \; (\beta \in \bspc)$.
\item A specific instance of the boundary problem~$(T, \bspc)$ arises by choosing a \emph{forcing
    function}~$f$. Thus one wants to find~$u \in \galg$ such that~$Tu = f$
  and~$\beta(u) = 0 \; (\beta \in \bspc)$. In terms of the Green's operator~$G$, the solution is
  expressed by the action~$u = Gf$, which has been defined when~$f \in \galg$. For a piecewise
  smooth\footnote{\label{fn:crazy-func}This is a sensible hypothesis for applications. The usual
    requirement is piecewise continuity~\cite[\S3.1.1]{StakgoldHolst2011}, but continuous functions
    failing $C^\infty$ except on isolated singularities are bizarre (Weierstrass function).} forcing
  function~$f$, no choice of integro-differential algebra~$\galg$ will enable~$f \in \galg$ since
  piecewise smooth functions do not form an integro-differential algebra
  (Proposition~\ref{prop:pcw-drb}).
\end{enumerate}
For a still more ambitious generalization, see our remarks in the Conclusion.

Let us return to the given regular boundary problem~$(T, \bspc)$. We allow~$(T, \bspc)$ to be an
arbitrary \emph{Stieltjes boundary problem}~\cite{RosenkranzSerwa2015}, meaning: (1) It may have
more than two evaluation points; (2) it may involve definite integrals in the boundary conditions;
(3) it may be ill-posed. We assume now that~$(\galg, \der, \cum)$ is an ordinary shifted
integro-differential algebra over the ordered field~$K$; then all the results of
Sections~\ref{sec:pcwext} and~\ref{sec:distmod} on~$\pcwext{\galg} \subset \distmod{\galg}$ are
available. The corresponding set of evaluations will be denoted
by~$\Phi := \{ \evl_a \mid a \in K\}$. We may form the standard integro-differential operator
ring~$\intdiffop$ and its equitable variant~$\eqintdiffop$, as described
in~\cite{RosenkranzSerwa2015}. Let~$J = \{ a_1, \dots, a_k \} \subseteq K$ be the evaluations
actually occurring in the boundary conditions~$\bspc$, in the sense that all~$\beta \in \bspc$ are
contained in the right ideal generated by the evaluations~$\evl_a \; (a \in J)$. Picking
an~$a \in \{ a_1, \dots, a_k\}$ as initialization point~$o$ of the Rota-Baxter operator~$\cum$
on~$\galg$ will avoid spurious case distinctions in~$g(x,\xi)$, but this is not required for correct
extraction~\cite[Rem.~1]{RosenkranzSerwa2015}.

The setting described in~\cite{RosenkranzSerwa2015} took the standard integro-differential
algebra~$\galg = C^\infty(\RR)$ over the real field~$K = \RR$ as a starting point for an algorithm
\emph{extracting the Green's function}~$g(x,\xi)$ from the Green's operator~$G$, which may itself be
computed as in~\cite{RosenkranzRegensburger2008a}. Since~$g(x,\xi)$ is at best piecewise smooth (for
well-posed problems) and in general even distributional (for ill-posed problems), a concrete
distribution module\footnote{It is essentially a certain $C^\infty(\RR)$-submodule of the dual space
  of the smooth compactly supported test functions, namely the one is generated by the
  Heavisides~$H_a$ and the Diracs~$\delta_a$ for all~$a \in J$.} from analysis was chosen.  For the
algebraic framework of boundary problems~$(T, \bspc)$ it is more appropriate to provide a purely
algebraic construction for accommodating the Green's function. We shall now show that we may indeed
consider~$g(x, \xi) \in \bivmod{\galg}$ for regular Stieltjes boundary problems
(Theorem~\ref{thm:extr-corr}) and~$g(x, \xi) \in \pcwext_2{\galg}$ for well-posed problems
(Proposition~\ref{prop:gf-well-posed}).

The procedure to achieve this goal is rather straightforward: The algorithm
of~\cite{RosenkranzSerwa2015} can be used~\emph{verbatim}, provided we interpret all Heavisides and
Diracs in the sense of~$\bivmod{\galg}$. We need only prove that the latter have the properties
required for the proof of the Structure Theorem for Green's
Functions~\cite[Thm.~1]{RosenkranzSerwa2015}. We start with the \emph{extraction map}
$\eta\colon \intdiffop \to \bivmod{\galg}$, which we shall write~$G \mapsto G_{x\xi}$ as in the
corresponding definition given in~\cite[\S5]{RosenkranzSerwa2015} before Lemma~1 (but we forgo the
modified equitable form, which may sometimes lead to further simplifications). For convenience, we
write out the definition of~$\eta$ in Table~\ref{tab:ex-map} below, using the natural $K$-basis
of~$\intdiffop$.

\begin{table}[ht!]
  \begin{equation*}
    \renewcommand{\arraystretch}{1.25}
    \begin{array}{|l|l|}
      \hline
      G \in \intdiffop & G_{x\xi} \in \bivmod{\galg}\\\hline
      u \, \der^i & u(x) \, \delta^{(i)}(x-\xi)\\
      u \cum v & u(x) \, v(\xi) \, [o \le \xi \le x]_{\pm}\\
      u \, \ev_a \der^i & (-1)^i \, u(x) \, \delta^{(i)}(\xi-a)\\
      u \, \ev_a \cum v & u(x) \, v(\xi) \, [o \le \xi \le a]_{\pm}\\\hline
    \end{array}
  \end{equation*}
  \caption{Extraction Map~$\eta\colon \intdiffop \to \bivmod{\galg}$}
  \label{tab:ex-map}
\end{table}

Here we have employed the abbreviation~$[a \le \xi \le b] := H(\xi-a) \, \bar{H}(\xi-b)$ for the
characteristic function of the interval~$[a,b]$ and~$[a \le \xi \le x] := H(\xi-a) \, H(x-\xi)$ for
that of~$[a,x]$. Note that this presupposes~$a < b$ and~$a < x$. While~$a$ and~$b$ are on an equal
footing, we must define the characteristic
function~$[x \le \xi \le a] := \bar{H}(\xi-a) \, \bar{H}(x-\xi)$ separately for the interval~$[x,a]$
with~$x < a$. For the extraction one actually needs signed versions for recording the relative
order, namely~$[a \le \xi \le b]_{\pm} := [a \le \xi \le b] - [b \le \xi \le x]$
and~$[a \le \xi \le x]_{\pm} := [a \le \xi \le x] - [x \le \xi \le a]$. One checks immediately that
this simplifies to~$[a \le \xi \le b]_{\pm} = H(\xi-a)-H(\xi-b)$
and to $[a \le \xi \le x]_{\pm} = H(x-\xi) + H(\xi-a) - 1$, respectively.

\begin{remark}
  The first row in Table~\ref{tab:ex-map} might make the impression of missing an \emph{alternating
    sign}, which was indeed---erroneously---present in our original
  formulation~\cite{RosenkranzSerwa2015}. Acting on a function~$f(x)$ and setting~$u(x)=1$ for
  simplicity, the rule is~$\vcum_{\!\alpha}^\beta \, \delta^{(i)}(x-\xi) \, f(\xi) = f^{(i)}(x)$. In
  analysis this is usually written as
  \begin{equation}
    \label{eq:sifting-der}
    \cum_{\!-\infty}^\infty \, \delta^{(i)}(\xi-x) \, f(\xi) \, d\xi = (-1)^i \, f^{(i)}(x)
  \end{equation}
  or~$\delta^{(i)}_x[f] = (-1)^i \, f^{(i)}(x)$ in the language of functionals. In the
  example~$\galg = C^\infty(\RR)$ with standard integro-differential structure, both formulations
  are equivalent by the well-known Dirac
  symmetry~$\delta^{(i)}(x-\xi) = (-1)^i \, \delta^{(i)}(\xi-x)$. In contrast, the alternating sign
  in the third row of \mbox{Table~\ref{tab:ex-map}} \emph{cannot} be avoided since this corresponds
  directly to~\eqref{eq:sifting-der}. It should also be noted that the sign in the fourth row of
  Table~\ref{tab:ex-map} has been corrected with respect to~\cite{RosenkranzSerwa2015},
  where~$\sgn(a)$ was used instead of the signed characteristic function.
\end{remark}

For simplicity we assume the \emph{initialization point} to be~$o = 0$. Otherwise some computations
would only become more cumbersome without providing additional insight (producing various
intermediate terms that eventually all cancel out). A nonzero initialization point~$o$ is best
handled by adapting the splitting point of~\eqref{eq:pcw-def1}
and~\eqref{eq:biv-pcw-def1}--\eqref{eq:biv-pcw-def2}; confer Footnote~\ref{fn:init-pt}.

Let us now prove that the extraction procedure preserves the meaning of the Green's operator. Since
the following result is applicable to the standard example~$(C^\infty(\RR), \der, \cum)$
over~$K = \RR$, it includes the setting of~\cite{RosenkranzSerwa2015} and may be seen as an
algebraic abstraction of the distribution setup customarily used in this context. For achieving a
smooth formulation, let us introduce an algebraic generalization of \emph{functional equality
  restricted to intervals}~$[\alpha, \beta]$ of the real line. Given piecewise
functions~$f, g \in \pcwext{\galg}$ and~$\alpha < \beta \in K$, we say that~$f = g$ \emph{on
  $[\alpha, \beta]$} iff~$f \equiv g \pmod{Z_{[\alpha, \beta]}}$ where~$Z_{[\alpha, \beta]}$ is the
ideal of~$\pcwext{\galg}$ generated by~$\bar{H}_\alpha$ and~$H_\beta$. In the standard
example~$\galg = C^\infty(\RR)$ this corresponds to the familiar notion of analysis. Since
$\pcwext{\galg}$ is isomorphic to the rings~$\pcwext_x{\galg}$ and~$\pcwext_\xi{\galg}$, we may
apply analogous interval restriction to the two latter rings. This allows us to give a precise
meaning to the colloquial statement: ``The Green's function provides a faithful realization of the
Green's operator.''

\begin{theorem}
  \label{thm:extr-corr}
  Let~$\galg$ be an ordinary shifted integro-differential algebra over any ordered field~$K$, and
  let~$\eta\colon \intdiffop \to \bivmod{\galg}$ be as in Table~\ref{tab:ex-map}.
  Choose~$\alpha, \beta \in K$ with~$\alpha \le a_1 < \dots < a_k \le \beta$.
  Writing~$\vcum := \smash{\vcum^\xi}$ for brevity, we have
  \begin{equation}
    \label{eq:extr-corr}
    G\!f (x) = \vcum_{\!\alpha}^\beta \, g(x, \xi) \, f(\xi) \quad\in\quad \galg_x
  \end{equation}
  on~$[\alpha, \beta]$, for all~$f \in \galg$ and~$G \in \intdiffop$ with
  extraction~$g(x,\xi) := G_{x\xi}$. If~$G$ is the Green's operator of a regular Stieltjes boundary
  problem, $g(x, \xi)$ is thus its Green's function.
\end{theorem}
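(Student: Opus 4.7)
The plan is to verify the identity \eqref{eq:extr-corr} by exploiting the canonical decomposition $\intdiffop = \diffop \dotplus \rbop \dotplus (\Phi)$ and the $K$-linearity of both sides in $G$. This reduces the task to checking the four rows of Table~\ref{tab:ex-map} individually, since each such row supplies $K$-basis elements of one of the three summands above. The final sentence of the theorem is then an immediate consequence: if $G$ is the Green's operator of a regular Stieltjes boundary problem $(T,\bspc)$, then \eqref{eq:extr-corr} exhibits $G$ as an integral operator with kernel $g(x,\xi) = G_{x\xi}$, which is exactly the defining property of a Green's function.

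The heart of the proof consists in establishing two sifting identities in $\bivmod{\galg}$, both understood modulo the ideal $Z_{[\alpha,\beta]}$. First I would prove the \emph{point sifting}
\begin{equation*}
  \vcum_{\!\alpha}^\beta f(\xi) \, \delta^{(i)}(\xi-a) = (-1)^i f^{(i)}(a) \quad\text{on $[\alpha,\beta]$,}
\end{equation*}
which is essentially a reformulation of the Rota-Baxter recursion \eqref{eq:rb-dist} transported to $\distmod_\xi{\galg}$, combined with the base-case evaluation $\vcum \delta_a = H_a - \bar{H}(a)$ from \eqref{eq:dirac-ader} and the fact that $\mev^\xi_c \, \delta^{(k)}(\xi-a) = 0$ as per \eqref{eq:mod-eval}; the alternating sign arises through iterated integration by parts against the derivatives~$\delta^{(i)}$. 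Second, I would prove the \emph{diagonal sifting}
\begin{equation*}
  \vcum_{\!\alpha}^\beta f(\xi) \, \delta^{(i)}(x-\xi) = f^{(i)}(x) \quad\text{on $[\alpha,\beta]$,}
\end{equation*}
by induction on $i$ using the recursion \eqref{eq:biv-dist-def2} for $\vcum^\xi$ on diagonal Diracs. The base case is handled by \eqref{eq:biv-pcw-def2} coupled with the diagonal evaluations~\eqref{eq:def-diag-eval-pcw}--\eqref{eq:def-diag-eval-dirac}, whose contributions at $\alpha$ and $\beta$ lie in~$Z_{[\alpha,\beta]}$ and therefore drop out.

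With the two sifting identities available, the operator rows $G = u \der^i$ and $G = u \, \ev_a \der^i$ of Table~\ref{tab:ex-map} yield~\eqref{eq:extr-corr} directly. The two integral rows need a short additional calculation: for $G = u \cum v$, one expands $[o \le \xi \le x]_{\pm} = H(x-\xi) + H(\xi-o) - 1$ and integrates each piece using~\eqref{eq:biv-pcw-def2} together with the shifted Rota-Baxter calculus of Section~\ref{sec:intdiffalg}, observing that all terms anchored at~$\alpha$ or~$\beta$ vanish in the quotient to leave exactly~$u(x) \cum_{\!o}^x v f$. The row $G = u \, \ev_a \cum v$ is handled analogously using $[o \le \xi \le a]_{\pm} = H(\xi-o) - H(\xi-a)$, which is already a pure piecewise function and whose $\vcum_\alpha^\beta$-integral collapses to $\cum_{\!o}^a v f$ in view of the assumption $a \in [\alpha,\beta]$.

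The main obstacle I anticipate is the diagonal sifting identity, because the recursion \eqref{eq:biv-dist-def2} intertwines the $x$- and $\xi$-structures and generates boundary terms of mixed provenance: one gets contributions living in~$\pcwext_x{\galg}$, in~$\pcwext_\xi{\galg}$, and in the diagonal summand~$\pcwext_{x-\xi}{\galg}$, and one has to track which of them belong to~$Z_{[\alpha,\beta]}$. The careful bookkeeping of these boundary pieces—verifying, for instance, that a residual $H(\alpha-\xi)$ or $H(\xi-\beta)$ is indeed annihilated on the interval of interest—is the delicate part. Once this is under control, the remaining verifications are routine computations with the axioms of Section~\ref{sec:biv-distributions}.
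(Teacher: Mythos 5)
Your proposal follows essentially the same route as the paper's proof: reduce by $K$-linearity to the four rows of Table~\ref{tab:ex-map}, establish the diagonal sifting identity~\eqref{eq:ex-case1} by induction on~$i$ via~\eqref{eq:biv-dist-def2} with base case from~\eqref{eq:biv-pcw-def2}, establish the point sifting identity in~$\distmod_\xi{\galg}$ via the recursion~\eqref{eq:rb-dist}, and handle the two integral rows by expanding the signed characteristic functions, with the residual boundary terms either cancelling in $\mev^\xi_\beta - \mev^\xi_\alpha$ or landing in~$Z_{[\alpha,\beta]}$. This matches the paper's argument, including its correct placement of the sign~$(-1)^i$ in the point sifting identity, so the plan is sound.
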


\begin{proof}
  Let us start by recalling the exact meaning of~\eqref{eq:extr-corr}, for an arbitrary Green's
  operator~$G \in \intdiffop$ arising from a regular Stieltjes boundary problem and a forcing
  function $f \in \galg$. On the left-hand side we have the usual action of the operator
  ring~$\intdiffop$ on the underlying integro-differential algebra~$\galg$; thus~$G\!f \in \galg$
  and~$G\!f(x) = \iota_x(G\!f) \in \galg_x$ via the
  embedding~$\iota_x\colon \galg \hookrightarrow \galg_2$. On the right-hand side
  of~\eqref{eq:extr-corr} we have the associated Green's function~$g(x,\xi) \in \distmod_2{\galg}$
  and the given function~$f(\xi) = \iota_\xi(f) \in \galg_\xi$ via the 
  embedding~$\iota_\xi\colon \galg \hookrightarrow \galg_2$; their
  product~$g(x, \xi) \, f(\xi) \in \distmod_2{\galg}$ is then integrated
  via~$\smash{\vcum_{\!\alpha}^\beta} := \smash{\vcum_{\!\alpha}^\xi} - \smash{\vcum_{\!\beta}^\xi}$
  where we have as usual set~$\smash{\vcum_{\!c}^\xi} := (1-\bivmev{\xi}{c}) \, \smash{\vcum^\xi}$
  and~$\bivmev{\xi}{c} := \mevl_\xi \, \bivmsh{\xi}{c}$ for arbitrary~$c \in K$.

  Let us now go through the rows of Table~\ref{tab:ex-map}. The first case is~$G = u \, \der^i$ so
  that we obtain immediately~$G\!f(x) = u(x) \, f^{(i)}(x)$ for the left-hand side
  of~\eqref{eq:extr-corr}. Since $u(x)$ is constant with respect to~$\smash{\vcum^\xi}$, the
  right-hand side is given
  by~$u(x) \, \smash{\vcum_{\!\alpha}^\beta} \, f(\xi) \, \delta^{(i)}(x-\xi)$, and it suffices to
  show
  \begin{equation}
    \label{eq:ex-case1}
    f^{(i)}(x) = \smash{\vcum_{\!\alpha}^\beta} \, f(\xi) \, \delta^{(i)}(x-\xi)
    \qquad \text{on~$[\alpha, \beta]$} ,
  \end{equation}
  which one does by induction on~$i$. For the base case~$i=0$ we compute
  \begin{align*}
    \vcum^\xi f(\xi) & \, \delta(x-\xi) = \vcum^\xi f'(\xi) \, H(x-\xi) - f(\xi) \, H(x-\xi)\\
                     & {} = \Big( \cum_{\!x}^\xi \, f'(\xi) \Big) \, H(x-\xi) 
                       + \Big( \cum^x f'(\xi) \Big) \, H_0(x) - f(\xi) \, H(x-\xi) ,
  \end{align*}
  using first~\eqref{eq:biv-dist-def2} and then~\eqref{eq:biv-pcw-def2}.
  Since~$\cum_{\!x}^\xi \, f'(\xi) = f(\xi) - f(x)$ and~$\cum^x f'(\xi) = f(x) - f(0)$, this
  simplifies to~$\vcum^\xi f(\xi) \, \delta(x-\xi) = - f(x) \, H(x-\xi) + r(x)$ where the
  term~$r(x) \in \pcwext_x{\galg}$ is invariant under~$\mev^\xi_a$. Hence the latter term cancels
  in~$\vcum_{\!\alpha}^\beta = (\mev^\xi_\beta - \mev^\xi_\alpha) \, \vcum^\xi$ so that
  \begin{equation*}
    \vcum_{\!\alpha}^\beta \, f(\xi) \, \delta(x-\xi) = \Big( \mev^\xi_\alpha - \mev^\xi_\beta \Big)
    \, f(x) \, H(x-\xi) = f(x) \, \Big( H(x-\alpha) - H(x-\beta) \Big) ,
  \end{equation*}
  where in the last step we have applied~\eqref{eq:def-diag-eval-pcw}. As a consequence,
  $f(x) - \vcum_{\!\alpha}^\beta \, f(\xi) \, \delta(x-\xi)$ is given
  by~$f(x) \big( \bar{H}_\alpha(x) + H_\beta(x) \big) \in Z_{[\alpha, \beta]} \subset
  \pcwext_x{\galg}$,
  which means that~$f(x) = \smash{\vcum_{\!\alpha}^\beta} \, f(\xi) \, \delta(x-\xi)$
  on~$[\alpha, \beta]$ as claimed. For the induction step, we compute
  \begin{equation*}
    \smash{\vcum_{\!\alpha}^\beta} f(\xi) \, \delta^{(i+1)}(x-\xi) = 
    \Big( \mev^\xi_\beta - \mev^\xi_\alpha \Big) \, 
    \Big( \vcum^\xi f'(\xi) \, \delta^{(i)}(x-\xi) - f(\xi) \, \delta^{(i)}(x-\xi) \Big)
  \end{equation*}
  by~\eqref{eq:biv-dist-def2}, which reduces
  to~$\smash{\vcum_{\!\alpha}^\beta} \, f'(\xi) \, \delta^{(i)}(x-\xi)$ since diagonal Diracs
  evaluate to zero by~\eqref{eq:def-diag-eval-dirac}. Using the induction
  hypothesis~\eqref{eq:ex-case1} with~$f'$ in place of~$f$, the latter integral
  equals~$f^{(i+1)}(x)$ on~$[a,b]$; this is indeed~\eqref{eq:ex-case1} for~$i+1$.

  Next we treat the second row of Table~\ref{tab:ex-map}. Since~$u(x)$ is constant with respect
  to~$\vcum^\xi$, it suffices to
  show~$\cum^x v(x) \, f(x) = \smash{\vcum_{\!\alpha}^\beta} \, v(\xi) \, [0 \le \xi \le x]_{\pm} \,
  f(\xi)$
  on~$[\alpha, \beta]$. Obviously, we may set~$v = 1$ without loss of generality.
  Using~$[0 \le \xi \le x]_{\pm} = H(x-\xi) + H_0(\xi) - 1$ we have
  \begin{align*}
    \vcum^\xi f(\xi) \, [0 & \le \xi \le x]_{\pm} = \vcum^\xi f(\xi) \, H(x-\xi) + \cum^\xi f(\xi) \,
    H_0(\xi) - \cum^\xi f(\xi)\\
    & {} = \Big( \cum_{\!x}^\xi \, f(\xi) \Big) \, H(x-\xi) + \Big( \cum^x f(x) \Big) \,
      H_0(x) - \Big( \cum^\xi f(\xi) \Big) \bar{H}_0(\xi)
  \end{align*}
  by applying~\eqref{eq:biv-pcw-def2} and~\eqref{eq:pcw-def1}. The middle summand cancels
  in~$\vcum_{\!\alpha}^\beta = (\mev^\xi_\beta - \mev^\xi_\alpha) \, \vcum^\xi$, and one obtains
  after a few simplifications
  \begin{equation*}
    \vcum_{\!\alpha}^\beta \, f(\xi) \, [0 \le \xi \le x]_{\pm} = 
    \big( \cum_{\!0}^\alpha \, f \big) \, \bar{H}_\alpha(x) + \big( \cum_{\!0}^\beta \, f \big) \,
    H_\beta(x) + \big( \cum^x f \big) \big( H_\alpha(x) - H_\beta(x) \big) .
  \end{equation*}
  Here we have used the facts~$H(-\alpha) = 1$ and~$H(-\beta) = 0$, which follow from our assumption
  that the interval~$[\alpha, \beta]$ contains the initialization point~$o = 0$ so
  that~$\alpha < 0 < \beta$. Since the first two summands on the right-hand side above are
  in~$Z_{[\alpha, \beta]} \subset \pcwext_x{\galg}$, we obtain finally
  \begin{equation*}
    \cum^x f - \vcum_{\!\alpha}^\beta \, f(\xi) \, [0 \le \xi \le x]_{\pm} \equiv
    \big( \cum^x f \big) \big( \bar{H}_\alpha(x) + H_\beta(x) \big) \equiv 0
    \pmod{Z_{[\alpha, \beta]}} ,
  \end{equation*}
  and~$\smash{\vcum_{\!\alpha}^\beta} \, f(\xi) \, [0 \le \xi \le x]_{\pm}$ is indeed equal
  to~$\cum^x f(x)$ on~$[\alpha, \beta]$, as was claimed.

  Turning to the third row of Table~\ref{tab:ex-map}, we can again set~$u(x) = 1$ without loss of
  generality. Hence we must show
  that~$f^{(i)}(a) = \smash{\vcum_{\!\alpha}^\beta} \, \delta^{(i)}(\xi-a) \, f(\xi)$ holds
  on~$[\alpha, \beta]$. In fact, it turns out to hold without constraints. We can now work purely
  in~$\distmod_\xi{\galg}$ and use~\eqref{eq:rb-dist} to calculate
  \begin{equation*}
    \vcum^\xi \delta^{(i)}(\xi-a) \, f(\xi) = \sum_{k=0}^i (-1)^k \, f^{(k)} \, H^{(i-k)}(\xi-a) 
    - (-1)^i \, \vcum^\xi f^{(i+1)}(\xi) \, H(\xi-a)
  \end{equation*}
  by a straightforward induction on~$i \ge 0$. Applying
  again~$\vcum_{\!\alpha}^\beta = (\mev^\xi_\beta - \mev^\xi_\alpha) \, \vcum^\xi$, all terms in the
  sum except for~$k=i$ cancel since~$\smash{\mev^\xi_\alpha}$ and~$\smash{\mev^\xi_\beta}$
  annihilate the Diracs, and we get
  \begin{equation*}
    \vcum_{\!\alpha}^\beta \, \delta^{(i)}(\xi-a) \, f(\xi) = (-1)^i \Big( f^{(i)}(\beta) \,
    \bar{H}(a-\beta) - f^{(i)}(\alpha) \, \bar{H}(a-\alpha) - \vcum_{\!\alpha}^\beta \,
    f^{(i+1)}(\xi) \, H(\xi-a) \Big) .
  \end{equation*}
  By our assumption~$\alpha < a < \beta$ we have~$\bar{H}(a-\alpha) = 0$ and~$\bar{H}(a-\beta) =
  1$. Now we compute the remaining integral according to~\eqref{eq:pcw-def1} to obtain
  \begin{equation*}
    (-1)^i \, \vcum_{\!\alpha}^\beta \, \delta^{(i)}(\xi-a) \, f(\xi) = f^{(i)}(\beta)
    + \Big( \mev^\xi_\alpha - \mev^\xi_\beta \Big) \Big( \big( \cum_{\!a}^\xi \, f^{(i+1)} \big)
    H(\xi-a) + \bar{H}(a) \, \cum_{\!0}^a \, f^{(i+1)} \Big) .
  \end{equation*}
  The last term in the right parenthesis cancels since it is invariant under both~$\mev^\xi_\alpha$
  and~$\mev^\xi_\beta$. Since we
  have~$\smash{\cum_{\!a}^\xi} \, f^{(i+1)} = f^{(i)}(\xi) - f^{(i)}(a)$, we get
  for~$(-1)^i \, \smash{\vcum_{\!\alpha}^\beta} \, \delta^{(i)}(\xi-a) \, f(\xi)$ the expected
  result
  \begin{equation*}
    f^{(i)}(\beta) + \bar{H}(a-\alpha) \, \Big( f^{(i)}(\alpha) - f^{(i)}(a) \Big) - 
    \bar{H}(a-\beta) \, \Big( f^{(i)}(\beta) - f^{(i)}(a) \Big) = f^{(i)}(a),
  \end{equation*}
  using again~$\bar{H}(a-\alpha) = 0$ and~$\bar{H}(a-\beta) = 1$.

  It remains to consider the fourth row of Table~\ref{tab:ex-map}. As for the second row, we may
  omit~$u(x)$ and~$v(x)$ without loss of generality, and it suffices to
  prove~$\cum_{\!0}^a \, f = \smash{\vcum_{\!\alpha}^\beta} f(\xi) \, [0 \le \xi \le
  a]_{\pm}$.
  Using $[0 \le \xi \le a]_{\pm} = H_0(\xi) - H_a(\xi)$, we compute
  first~$\smash{\vcum^\xi} f(\xi) \, [0 \le \xi \le a]_{\pm}$ as
  \begin{equation*}
    \cum^\xi f(\xi) \, H_0(\xi) - \cum^\xi f(\xi) \, H_a(\xi) = (\cum^\xi f) \, H_0(\xi) -
    (\cum_{\!a}^\xi \, f) \, H_a(\xi) - \bar{H}(a) \, \cum_{\!0}^a \, f
  \end{equation*}
  according to~\eqref{eq:pcw-def1}. As before, the last term cancels when
  computing~$\vcum_{\!\alpha}^\beta = (\mev^\xi_\beta - \mev^\xi_\alpha) \, \vcum^\xi$, and we
  obtain the desired equality
  \begin{align*}
    \smash{\vcum_{\!\alpha}^\beta} & f(\xi) \, [0 \le \xi \le a]_{\pm} = 
    (\cum_{\!0}^\beta f) \, \bar{H}(-\beta) - (\cum_{\!0}^\alpha f) \, \bar{H}(-\alpha) + \Big(
                                     \cum_{\!0}^a \, f - \cum_{\!0}^\beta \, f \Big) \, \bar{H}(a-\beta)\\
    & {} + \Big( \cum_{\!0}^\alpha \, f - \cum_{\!0}^a \, f \Big) \, \bar{H}(a-\alpha) =
      \cum_{\!0}^a f,
  \end{align*}
  using again~$\bar{H}(-\alpha) = \bar{H}(a-\alpha) = 0$ and~$\bar{H}(-\beta) = \bar{H}(a-\beta)
  = 1$.
\end{proof}

\begin{proposition}
  \label{prop:gf-well-posed}
  Let~$\galg$ and~$\eta\colon \intdiffop \to \bivmod{\galg}$ be as in
  Theorem~\ref{thm:extr-corr}. If the regular boundary problem~$(T, \bspc)$ is well-posed, then we
  have~$g(x,\xi) \in \pcwext_2{\galg}$ for the Green's function~$g(x,\xi) = G_{x\xi}$ extracted from
  its Green's operator~$G = (T,\bspc)^{-1 }$.
\end{proposition}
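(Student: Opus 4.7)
The plan is to trace the structure of the Green's operator $G = (T,\bspc)^{-1}$ through the extraction map $\eta\colon \intdiffop \to \bivmod{\galg}$ displayed in Table~\ref{tab:ex-map}, and to exploit the fact that distributional components in~$g(x,\xi)$ can only arise from the two rows of that table whose right-hand side involves Diracs, namely the rows labelled~$u \, \der^i$ and~$u \, \ev_a \der^i$. Conversely, the rows~$u \cum v$ and~$u \, \ev_a \cum v$ yield only products of elements of~$\galg_x$ and~$\galg_\xi$ with (signed) Heaviside factors~$[o \le \xi \le x]_{\pm}$ and~$[o \le \xi \le a]_{\pm}$, which by construction lie in the bivariate piecewise extension~$\pcwext_{x\xi}{\galg} \subset \pcwext_2{\galg}$.

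The crux is therefore to pin down what \emph{well-posed} means algebraically and to show that under this hypothesis, a normal form of~$G$ in~$\intdiffop$ contains no monomial of the form~$u \, \der^i$ or~$u \, \ev_a \der^i$ with $i \ge 0$ (for the former) or $i \ge 1$ (for the latter). Concretely, I would adopt the characterization from~\cite{RosenkranzRegensburger2008a,RosenkranzSerwa2015}: $(T, \bspc)$ is well-posed iff the boundary space~$\bspc$ is spanned by pure point evaluations $\ev_{a_1}, \dots, \ev_{a_n}$ (no derivative components~$\ev_a \, \der^i$ with~$i \ge 1$ and no Stieltjes integral components). Under this assumption, the construction of the Green's operator as the inverse of~$(T, \bspc)$ within the ring~$\intdiffop$ yields~$G$ as a $K$-linear combination of monomials~$u \cum v$ and~$u \, \ev_a \cum v$ only, since the projector onto~$\bspc^\perp$ becomes $1 - \sum c_i u_i \, \ev_{a_i}$ with fundamental solutions~$u_i \in \ker T$, and composing with the right inverse~$\cum \cdots \cum$ of~$T$ produces precisely monomials of those two types.

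Having established this normal form, I would conclude by $K$-linearity of~$\eta$: each surviving monomial is sent by the second or fourth row of Table~\ref{tab:ex-map} to an element of~$\pcwext_{x\xi}{\galg}$, and hence~$g(x,\xi) = \eta(G)$ lies in~$\pcwext_{x\xi}{\galg} \subset \pcwext_2{\galg}$, as claimed. In particular the diagonal summand~$\distmod_{x-\xi}{\galg}$ of~$\distmod_2{\galg}$ is not engaged, and the entire contribution to~$g(x,\xi)$ comes from the bivariate piecewise subalgebra.

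The main obstacle is the first step: making precise the normal-form statement for~$G$ under the well-posedness hypothesis. In~\cite{RosenkranzRegensburger2008a} this is implicit in the constructive inversion algorithm for regular boundary problems, but to keep the present proof self-contained one would either cite that construction explicitly or sketch the induction on the order of~$T$ (using the fundamental system to build a right inverse in~$\galg[\cum]$ and then correcting by the projector onto~$\bspc^\perp$, which under well-posedness is built only from~$\ev_{a_i}$ without differential components). Once the normal form is in hand, the rest of the argument is a single case distinction over Table~\ref{tab:ex-map}.
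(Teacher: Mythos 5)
Your overall reduction is in the same spirit as the paper's: both arguments come down to showing that the normal form of $G$ in $\intdiffop$ contains no monomials of the Dirac-producing types $u\,\der^i$ and $u\,\ev_a\der^i$, so that only the second and fourth rows of Table~\ref{tab:ex-map} are engaged and $g(x,\xi)$ lands in the piecewise part. The genuine gap is your algebraic characterization of well-posedness. A well-posed Stieltjes boundary problem is \emph{not} one whose boundary space is spanned by pure point evaluations: it may contain derivative evaluations $\ev_a\der^k$ of any order $k<n$ (Neumann- or Robin-type conditions, $n$ the order of $T$) as well as Stieltjes integral components $\ev_a\cum v$; well-posedness only excludes local terms of order $k\ge n$. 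This is exactly how the hypothesis is used in the paper ("all local terms $\ev_\alpha\der^k$ occurring in $\beta_1,\dots,\beta_n$ must have $k<n$"). With your restricted definition you prove the proposition only for Dirichlet-type problems and silently exclude the cases the statement is really about.

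This matters because once conditions $\ev_a\der^k$ with $1\le k<n$ are allowed, the kernel projector $P=\trp{u}\cdot\beta(u)^{-1}\cdot\beta$ \emph{does} contain monomials $f\,\ev_a\der^k$, and it is not automatic that composing them with the fundamental right inverse $T^\blacklozenge=\sum_j u_j\cum v_j$ of $T$ leaves only monomials of the types $u\cum v$ and $u\,\ev_a\cum v$: a priori, $\ev_a\der^k\,T^\blacklozenge$ could leave residual terms $c\,\ev_a\der^j$, which extract to Diracs $\delta^{(j)}(\xi-a)$ under the third row of Table~\ref{tab:ex-map}. That these residual terms vanish precisely when $k<n$ rests on the structure of the right inverse (the Wronskian-type identities $\sum_j u_j^{(i)}v_j=0$ for $i\le n-2$), i.e.\ on Lemmas~1 and~2 of \cite{RosenkranzSerwa2015}, which is what the paper's proof invokes: the only possible contributions to the distributional part $\hat G$ of $G$ come from the terms $f\,\ev_\alpha\der^k$ in $P$, and these contribute nothing distributional when $k<n$, which well-posedness guarantees. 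Your proposal, by defining away derivative and integral boundary conditions, never confronts this step --- but it is exactly the step that carries the proof. (The exclusion of Stieltjes integral components is also incorrect as a characterization, though harmless for the conclusion, since such terms extract to piecewise elements anyway.)
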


\begin{proof}
  We use the crucial fact~\cite[Thm.~1]{RosenkranzSerwa2015} that the Green's
  function $g(x,\xi) = \tilde{g}(x,\xi) + \hat{g}(x,\xi)$ splits into a functional
  part~$\tilde{g}(x,\xi) \in \pcwext_2{\galg}$ and a distributional
  part $\hat{g}(x,\xi) \in \distmod_2{\galg} \setminus \pcwext_2{\galg}$. Hence it suffices to show
  that~$\hat{g}(x,\xi) = 0$. In the proof of~\cite[Thm.~1]{RosenkranzSerwa2015}, the splitting of
  the Green's function is induced by a corresponding splitting of the Green's
  operator~$G = \tilde{G} + \hat{G}$ into a functional part~$\tilde{G}$
  with~$\tilde{G}_{x\xi} = \tilde{g}(x,\xi)$ and a distributional part~$\hat{G}$
  with~$\hat{G}_{x\xi} = \hat{g}(x,\xi)$, so our goal is to show~$\hat{G} = 0$. From the proof
  of~\cite[Lem.~2]{RosenkranzSerwa2015} we see that the only possible contributions to~$\hat{G}$
  come from terms of the form~$f \ev_\alpha \der^k$ in the kernel projector~$P$. Moreover, such a
  term will go to~$\tilde{G}$ if~$k < n$ since in this case the second sum
  in~\cite[Lem.~1]{RosenkranzSerwa2015} is absent, as has been observed after Equation~(8)
  of~\cite{RosenkranzSerwa2015}.

  Thus it suffices to prove that~$k<n$ for all terms~$f \ev_\alpha \der^k$ occurring in the kernel
  projector~$P$. But this is clear from the form of~$P$ as given e.g.\@ in the proof of Theorem~26
  in~\cite{RosenkranzRegensburger2008a}.  Indeed, if~$u = (u_1, \dots, u_n) \in \galg^n$ is a
  fundamental system for~$\ker{T}$ and~$\beta = (\beta_1, \dots, \beta_n) \in (\Phi)^n$ a basis of
  Stieltjes conditions for the boundary space~$\bspc$, the kernel projector is given
  by~$P = \trp{u} \cdot \beta(u)^{-1} \cdot \beta \in \intdiffop$, and since~$(T,\bspc)$ is
  well-posed by hypothesis, all local terms~$\ev_\alpha \der^k$ occurring in the boundary
  conditions~$\beta_1, \dots, \beta_n$ must have~$k < n$.
\end{proof}

We turn now to the application labeled~(2) in the above introduction. We continue to assume
that~$(\galg, \der, \cum)$ is an ordinary shifted integro-differential algebra. Recall that any
integro-differential algebra~$(\galg, \der, \cum)$ contains an isomorphic copy of the
\emph{polynomial ring}~$(K[x], \der, \cum)$ with its standard integro-differential
structure~\cite[Prop.~3]{BuchbergerRosenkranz2012}; we may use the identification~$x := \cum 1$.
Since~$(\galg, \der, \cum)$ is ordinary, it is not difficult to see
that~$\ker{\der^n} = [1, \dots, x^{n-1}]$ so that~$\dim \ker{\der^n} = n$, as one would
expect~\cite[Lem.~3.21]{RegensburgerRosenkranz2009a}. However, we need an additional condition to
ensure similar behavior for arbitrary differential operators (including nonmonic ones). Hence let us
call a differential algebra~$(\galg, \der)$ \emph{strongly ordinary} if~$\dim \ker{T} < \infty$ for
any~$T \in \galg[\der]$. Note that all the usual examples of ordinary differential algebras in
analysis are strongly ordinary, in particular our standard example~$C^\infty(\RR)$. In fact, there
is an explicit upper bound in the real-analytic theory~\cite[Thm.~1.3.6]{KatoStruppa1999},
namely~$\dim \ker{T} \le m + d$, where~$m$ is the order of the differential operator~$T$ and $d$
counts the zeros of its leading coefficient with multiplicities (using hyperfunctions the estimate
becomes an identity).

We must first ensure that we can uniquely recover Dirac distributions. To achieve this we will need
some analytic assumption that plays the role of the Fundamental Lemma of the Variational Calculus,
often ascribed to Paul du Bois-Reymond. Hence we say that~$\nglb \in \galg$ is \emph{degenerate on}
$[\alpha, \beta]$ if~$\cum_{\!\alpha}^\beta \, \nglb(\xi) \, f(\xi) = 0$ for all~$f \in \galg$. In
the language of~\cite[\S3]{RosenkranzKorporal2013}, this says that the Stieltjes
condition~$\cum_{\!\alpha}^\beta \, \nglb$ is degenerate. In the same vein, we
call~$k(x,\xi) \in \distmod_2{\galg}$ nondegenerate it does not contain any
degenerate~$\nglb(\xi) \in \galg_\xi$; we do not need a similar condition on its $\galg_x$
parts. This restriction is clearly no loss of generality since our goal is to integrate
over~$[\alpha, \beta]$, so degenerate functions may as well be discarded from the outset.

\begin{proposition}
  \label{prop:only-delta}
  Let~$(\galg, \der, \cum)$ be a strongly ordinary shifted integro-differential algebra and choose
  any bivariate distribution~$k(x,\xi) \in \distmod_2{\galg}$ that is nondegenerate
  on~$[\alpha, \beta]$.  If one has $\smash{\vcum_{\!\alpha}^\beta} \, k(x, \xi) \, f(\xi) = f(x)$
  on~$[\alpha, \beta]$ for all~$f \in \galg$, then necessarily~$k(x,\xi) = \delta(x-\xi)$.
\end{proposition}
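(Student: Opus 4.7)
I would set $m(x,\xi) := k(x,\xi) - \delta(x-\xi)$ and prove $m = 0$. Since $\delta(x-\xi)$ itself satisfies the sifting identity $\vcum_\alpha^\beta \delta(x-\xi)\, f(\xi) = f(x)$ on $[\alpha,\beta]$ --- exactly the base case $i=0$, $u=1$ worked out for the first row of Table~\ref{tab:ex-map} in the proof of Theorem~\ref{thm:extr-corr} --- linearity reduces the task to the following: if $m \in \distmod_2{\galg}$ is nondegenerate on $[\alpha,\beta]$ and $\vcum_\alpha^\beta m(x,\xi)\, f(\xi) = 0$ on $[\alpha,\beta]$ for every $f \in \galg$, then $m = 0$.

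I would then expand $m$ in a canonical $\galg_2$-basis of $\distmod_2{\galg} = \distmod_{x\xi}{\galg} \oplus \distmod_{x-\xi}{\galg}$ and regroup its (finitely many) summands by $\xi$-structure into four families: (i) a pure tensorial part $\sum_i u_i(x)\, v_i(\xi)$ with $v_i(\xi) \in \pcwext_\xi{\galg}$; (ii) $\xi$-localised parts $\sum u_{i,k,c}(x)\, \delta^{(k)}(\xi-c)$; (iii) $x$-localised parts $\sum v_{j,k,c}(\xi)\, \delta^{(k)}(x-c)$; and (iv) diagonal parts $\sum c_n(x,\xi)\, \delta^{(n)}(x-\xi) + \sum d_n(x,\xi)\, H^{(n)}(x-\xi)$. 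Applying $\vcum_\alpha^\beta$ termwise, reusing the four row-by-row computations carried out for Theorem~\ref{thm:extr-corr}, converts the hypothesis into a vanishing identity in $\distmod_x{\galg}$ on $[\alpha,\beta]$, whose left-hand side is the corresponding sum of four $x$-contributions.

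The crucial separation is provided by the canonical direct-sum decomposition of $\distmod_x{\galg}$ into components $\distmod_x{\galg}_c$ indexed by base point $c \in K$ (see the filtration paragraph after Theorem~\ref{thm:intdiff-mod-shifted}): type-(iii) contributions sit in the Dirac summands $\distmod_x{\galg}_c$ and force $\cum_\alpha^\beta v_{j,k,c}(\xi)\, f(\xi) = 0$ for every $f \in \galg$, so nondegeneracy kills each $v_{j,k,c}$. The remaining three families contribute inside $\pcwext_x{\galg}$. Using that $K[x] \subset \galg$ and invoking strong ordinariness to rule out a nontrivial differential operator annihilating all polynomial test functions, I would test against $f = 1, x, x^2, \dots, x^N$ for sufficiently large $N$ and peel off families (iv) and (ii) by induction on the highest derivative order appearing. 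What remains of type (i) leaves $\sum_i u_i(x) \cdot \cum_\alpha^\beta v_i(\xi)\, f(\xi) = 0$ for every $f$, whence $K$-linear independence of a minimal $\{u_i\}$ together with nondegeneracy of $m$ forces all $v_i = 0$.

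The main obstacle will be disentangling type-(iv) diagonal contributions from type-(ii) $\xi$-Diracs whose base point lies in $\{\alpha, \beta\}$: both yield $f^{(n)}$-flavoured boundary terms that threaten to cancel rather than both vanish. The cleanest resolution I foresee is a double induction on the highest diagonal order $n$ and the highest $\xi$-Dirac order $k$, exploiting that the identity must hold on all of $[\alpha,\beta]$ (not merely at its endpoints) to distinguish terms supported at a single $x$-point from diagonal terms that spread along $x=\xi$.
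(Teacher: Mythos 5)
Your overall skeleton does match the paper's proof of Proposition~\ref{prop:only-delta}: expand $k(x,\xi)$ along the decomposition of $\distmod_2{\galg}$, kill the $x$-Dirac family $\sum_{i,a}\Xi_{i,a}(\xi)\,\delta^{(i)}(x-a)$ because the $\delta^{(i)}(x-a)$ are linearly independent of $\pcwext_x{\galg}$ and nondegeneracy then forces $\Xi_{i,a}(\xi)=0$, use polynomial test functions from $K[x]\subset\galg$ together with strong ordinariness on the diagonal part, and finally eliminate the $\xi$-Dirac coefficients. (Your ansatz is in fact slightly more complete than \eqref{eq:kernel}, since you keep the purely tensorial and the diagonal Heaviside terms explicitly; and your reduction to the homogeneous statement via $m:=k-\delta(x-\xi)$ is harmless, mirroring the uniqueness remark after Theorem~\ref{thm:dist-bvp}.)

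The genuine gap is exactly the point you yourself flag as ``the main obstacle'': separating the $\xi$-localised Dirac terms $X_{i,a}(x)\,\delta^{(i)}(\xi-a)$, which act on $f$ only through the finitely many jet values $f^{(i)}(a)$, from the diagonal terms, which act as a differential operator $f\mapsto\sum_i L_i(x)\,\der^i\big(R_i(x)\,f(x)\big)$. Testing against monomials $1,x,\dots,x^N$ plus a ``double induction on the highest orders'' does not by itself achieve this: for $f=x^N$ both families produce elements of $\pcwext_x{\galg}$ (a fixed coefficient $X_{i,a}(x)$ times a scalar, versus $L_i(x)$ times a polynomial in $x$), and nothing in the sketch excludes cancellations between the two families, which is precisely why the induction remains unsubstantiated. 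The paper's resolution is Hermite interpolation: since $K[x]\subset\galg$, one constructs polynomial test functions $f_c$ whose derivatives vanish at the finitely many base points $a$ occurring with nonzero $X_{i,a}$, while still carrying arbitrarily many free parameters $c$. For these $f_c$ the point-evaluation sum in \eqref{eq:kernel-action} disappears, so the hypothesis collapses to a linear differential equation satisfied by an infinite-parameter family, and strong ordinariness (finite-dimensional kernels) forces the diagonal operator to be trivial, i.e.\ reduces the diagonal part to $\delta(x-\xi)$; only then does a second application of Hermite interpolation, prescribing a single unit jet value $f^{(i')}(a')=1$ and zero for all other relevant jets, kill each $X_{i',a'}$. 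You invoke strong ordinariness in the right spirit, but without first neutralising the jet terms by interpolation (or an equivalent device) the differential-operator conclusion cannot be drawn, so as written the proposal stops short of a proof at its central step.
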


\begin{proof}
  We
  have~$\distmod_2{\galg} = (\distmod_x{\galg} \otimes_\galg \pcwext_\xi{\galg}) \oplus
  (\pcwext_x{\galg} \otimes_\galg \distmod_\xi{\galg}) \oplus \distmod_{x-\xi}{\galg}$
  by the definition of bivariate distributions, hence we may assume
  \begin{equation}
    \label{eq:kernel}
    k(x,\xi) = \sum_{i,a} \Xi_{i,a}(\xi) \, \delta^{(i)}(x-a) + \sum_{i,a} X_{i,a}(x)
    \, \delta^{(i)}(\xi-a) + \sum_{i=0}^N M_i(x,\xi) \, \delta^{(i)}(x-\xi),
  \end{equation}
  where~$N \in \NN$, the summations are over~$i \in \NN$ and~$a \in K$, containing only
  finitely many nonzero coefficients~$\Xi_{i,a}(\xi) \in \pcwext_\xi{\galg}$,
  $X_{i,a}(x) \in \pcwext_x{\galg}$ and~$M_i(x,\xi) \in \pcwext_{x\xi}{\galg}$. Since the
  latter ring is by
  definition~$\pcwext_{x\xi}{\galg} = \pcwext_x{\galg} \otimes_\galg \pcwext_\xi{\galg}$, we may
  also assume~$M_i(x,\xi) = L_i(x) \, R_i(\xi)$ with~$L_i(x) \in \pcwext_x{\galg}$
  and~$R_i(\xi) \in \pcwext_\xi{\galg}$. In fact, we can further restrict
  to~$R_i(\xi) \in \galg_\xi$ due to the relations contained in the
  $\pcwext_{x\xi}{\galg}$-submodule~$\smash{\hat{Z}}$ of Definition~\ref{def:diagdist}. From
  Table~\ref{tab:ex-map} we can read off the action of each term in~\eqref{eq:kernel} on the
  left-hand side of the given
  identity~$\smash{\vcum_{\!\alpha}^\beta} \, k(x, \xi) \, f(\xi) = f(x)$.  Thus we obtain
  \begin{equation}
    \label{eq:kernel-action}
    \sum_{i,a} \Xi_{i,a} \, \delta^{(i)}(x-a) + \sum_{i,a} (-1)^i \,
    X_{i,a}(x) \, f^{(i)}(a) + \sum_{i=0}^N L_i(x) \, \der^i \Big( R_i(x) \, f(x) \Big) = f(x),
  \end{equation}
  on~$[\alpha, \beta]$
  where~$\Xi_{i,a} := \cum_{\!\alpha}^\beta \, \Xi_{i,a}(\xi) \, f(\xi) \in K$. Since the
  distributions~$\delta^{(i)}(x-a)$ are by construction linearly independent from any element
  of~$\pcwext_x{\galg}$, the assumption~$\bar{f}(x) = f(x)$ forces~$\Xi_{i,a} = 0$ and hence
  also~$\Xi_{i,a}(\xi) = 0$ by the hypothesis on nondegeneracy.

  As noted above, we have~$K[x] \subset \galg$. It is easy to see that in such circumstances, the
  well-known algorithm for \emph{Hermite interpolation} applies to construct polynomials~$p(\xi)$
  with arbitrary values prescribed for~$p^{(j-1)}(\xi_k)$, where~$j \in \{ 1, \dots, m \}$
  and~$\{ \xi_1, \dots, \xi_n \} \subset K$.  Here~$m$ and~$n$ may be arbitrarily large positive
  integers since we can construct polynomials of indefinitely high degree involving powers of~$x-a$
  for any~$a \in K$. In particular, we can construct polynomials~$f_c(\xi)$ with~$f^{(i)}(a) = 0$
  for all those~$(i,a)$ that occur in~\eqref{eq:kernel-action} with nonzero
  coefficients~$X_{i,a}$. We may further assume that they carry~$l$ arbitrary
  parameters~$c_1, \dots, c_l$ by adding suitable interpolation data for ``unused'' higher
  derivatives; the parameters are collected into~$c = (c_1, \dots, c_l) \in K^l$.  Note
  that~$l \in \NN$ is arbitrary since we may add indefinitely many interpolation values some of
  which may be frozen to zero if needed.

  Substituting the interpolation polynomial thus obtained into~\eqref{eq:kernel-action}, also the
  middle sum now vanishes by our construction, and we are left with the differential equation
  \begin{equation}
    \label{eq:diffeq-dubois}
    \sum_{i=0}^N L_i(x) \, \der^i \Big( R_i(x) \, f_c(x) \Big) = f_c(x) 
  \end{equation}
  on~$[\alpha, \beta]$. Let us
  write~$L_i(x) = l_i(x) + \sum_{b \in K} l_{i,b}(x) \, H(x-b)$
  with~$l_i(x), l_{i,b}(x) \in \galg_x$. Then the
  set~$B := \{ b \in K \mid \exists_{i=0,\dots,N} \: L_{i,b}(x) \neq 0 \}$ is clearly
  finite and contained in~$[\alpha,\beta]$, so we may rewrite~\eqref{eq:diffeq-dubois} as
  \begin{equation*}
    \sum_{i=0}^N l_i(x) \, \der^i \Big( R_i(x) \, f_c(x) \Big) 
    + \sum_{b \in B} H(x-b) \sum_{i=0}^N l_{i,b}(x) \, \der^i \Big( R_i(x) \,
    f_c(x) \Big) = f_c(x) + z_{\alpha,\beta}
  \end{equation*}
  for some~$z_{\alpha,\beta} \in \hat{Z}_{\alpha, \beta}$. But the set
  $\{ H(x-a), H(x-\beta) \} \cup \{ H(x-b) \mid b \in B \}$ is linearly
  independent over~$\galg$; hence~\eqref{eq:diffeq-dubois} splits into the~$|B|+1$ separate
  differential equations
  \begin{equation*}
    \sum_{i=0}^N l_i(x) \, \der^i \Big( R_i(x) \, f_c(x) \Big) = f_c(x), \qquad
    \sum_{i=0}^N l_{i,b}(x) \, \der^i \Big( R_i(x) \,
    f_c(x) \Big) = 0 \quad (b \in B) .
  \end{equation*}
  Unless the underlying differential operators vanish, each of these differential equations has an
  infinite-dimensional solution space containing all~$f_c(x)$ with~$c \in K^l$ for $l \in N$
  indefinitely large. Since~$(\galg, \der) \cong (\galg_x, \der_x)$ is assumed to be strongly
  ordinary, we conclude that we must in fact have~$l_0(x) = R_i(x) = 1$ and~$l_i(x) = 0$ for~$i > 0$
  as well as~$l_{i,b}(x) = 0$.

  At this point we have reduced~\eqref{eq:kernel}
  to~$k(x,\xi) = \sum_{i,a} X_{i,a}(x) \, \delta^{(i)}(\xi-a) + \delta(x-\xi)$, hence
  the action yields~$\sum_{i,a} (-1)^i \, X_{i,a}(x) \, f^{(i)}(a) = 0$ for
  all~$f \in \galg$. Since for each fixed~$i$, there are only finitely many nonzero
  coefficients~$\{ X_{i,a} \mid a \in A_i \}$ in~\eqref{eq:kernel}, and these may be
  assumed to be linearly independent over~$K$ since we may always combine them if needed. But
  applying again Hermite interpolation, one may choose~$f \in K[x] \subset \galg_x$ such
  that~$f^{(i')}(a') = 1$ for some fixed pair~$(i',a')$ and~$f^{(i)}(a) = 0$ for all
  other pairs~$(i,a)$. This implies immediately that~$X_{i',a'} = 0$. Since~$(i',a')$
  is arbitrary, we conclude that indeed~$k(x,\xi) = \delta(x-\xi)$.
\end{proof}

For an integro-differential operator~$U \in \intdiffop$ we shall
write~$U_x\colon \distmod_2{\galg} \to \distmod_2{\galg}$
and $U_\xi\colon \distmod_2{\galg} \to \distmod_2{\galg}$ for the two operators induced by the
\emph{action} with respect to~$x$ and with respect to~$\xi$. In other words: If $U = f \in \galg$
then~$U_x$ is the multiplication operator induced by~$f(x) \in \galg_x$ while~$U_\xi$ is induced
by~$f(\xi) \in \galg_\xi$. Likewise, if $U = \der$ then~$U_x$ acts as~$\vder_x$ and~$U_\xi$
as~$\vder_\xi$, and if~$U = \smash\cum$ then $U_x$ acts as~$\smash{\vcum^x}$ and~$U_\xi$ accordingly
as~$\smash{\vcum^\xi}$. Finally, for an evaluation~$U = \ev_\alpha$, the action~$U_x$
is~$\mev^x_\alpha$ and the action~$U_\xi$ is correspondingly~$\mev^\xi_\alpha$.

\begin{theorem}
  \label{thm:dist-bvp}
  Let~$(\galg, \der, \cum)$ be a strongly ordinary shifted integro-differential algebra and
  $(T, \bspc)$ a regular Stieltjes boundary problem over~$(\galg, \der, \cum)$. Then there exists a
  bivariate distribution $g(x,\xi) \in \distmod_2{\galg}$ such that
  \begin{equation}
    \label{eq:dist-bvp}
    \bvp{T_x \, g(x,\xi) = \delta(x-\xi),}{\beta_x \, g(x,\xi) = 0 \quad (\beta \in \bspc).}
  \end{equation}
  Moreover, this~$g(x,\xi)$ coincides with the Green's function of Theorem~\ref{thm:extr-corr}.
\end{theorem}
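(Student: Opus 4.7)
The plan is to take $g(x,\xi) := G_{x\xi}$ as produced by the extraction map of Table~\ref{tab:ex-map}, where $G = (T,\bspc)^{-1}$ is the Green's operator of the given regular Stieltjes boundary problem. Since this is by definition the Green's function of Theorem~\ref{thm:extr-corr}, the ``moreover'' clause of the statement holds by construction, and what remains is to verify the two conditions in~\eqref{eq:dist-bvp}.

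For the differential equation $T_x \, g(x,\xi) = \delta(x-\xi)$, I would start from the extraction identity $Gf(x) = \vcum_{\!\alpha}^\beta \, g(x,\xi) \, f(\xi)$ on $[\alpha,\beta]$ guaranteed by Theorem~\ref{thm:extr-corr}, valid for all $f \in \galg$. Applying $T_x$ to both sides, the left-hand side collapses to $(TG)f(x) = f(x)$ since $TG = 1_\galg$. For the right-hand side I would use the duplex differential Rota-Baxter structure of Theorem~\ref{thm:biv-distmod}: every primitive $x$-operation ($\vder_x$, $\vcum^x$, multiplication by $\galg_x$-elements, and evaluations $\mev^x_\gamma$) commutes with $\vcum^\xi$ and with the $\xi$-evaluations $\mev^\xi_\alpha, \mev^\xi_\beta$, since they act on different tensor factors. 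Hence $T_x$ slides past $\vcum_{\!\alpha}^\beta$ to give $\vcum_{\!\alpha}^\beta \, (T_x \, g(x,\xi)) \, f(\xi) = f(x)$ on $[\alpha,\beta]$ for all $f \in \galg$. Provided $T_x \, g(x,\xi)$ is nondegenerate on $[\alpha,\beta]$, Proposition~\ref{prop:only-delta} then forces $T_x \, g(x,\xi) = \delta(x-\xi)$ as an identity in $\distmod_2{\galg}$.

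For the boundary conditions, regularity of $(T,\bspc)$ gives $\im{G} \subseteq \orth{\bspc}$, so $\beta(Gf) = 0$ for every $\beta \in \bspc$ and $f \in \galg$. Applying $\beta_x$ to the extraction identity and again commuting past $\vcum^\xi$ yields $\vcum_{\!\alpha}^\beta \, (\beta_x \, g(x,\xi)) \, f(\xi) = 0$ for all $f \in \galg$. Now $\beta_x \, g(x,\xi)$ lies in $\distmod_\xi{\galg}$, since $\beta_x$ collapses the $x$-factor to a scalar in $K$. The vanishing of its integral against arbitrary $f$ is a simpler univariate version of the calculation in Proposition~\ref{prop:only-delta}: writing $\beta_x \, g(x,\xi)$ in the canonical form~\eqref{eq:dist-can} for $\distmod_\xi{\galg}$ and applying the Hermite interpolation and linear-independence arguments used there (now without the diagonal component), one sees that all coefficients must vanish, so $\beta_x \, g(x,\xi) = 0$.

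The principal obstacle is the rigorous commutation of $T_x$ with $\vcum_{\!\alpha}^\beta$ on $\distmod_2{\galg}$. Although each primitive $x$-operation treats the foreign $\xi$-factors as constants by construction, $T$ may be any monic differential operator in $\galg[\der]$ with coefficients in $\galg$, so one must unwind the definitions of Section~\ref{sec:biv-distributions} termwise. A secondary subtlety is verifying nondegeneracy of $T_x \, g(x,\xi)$ on $[\alpha,\beta]$: inspecting the summands in Table~\ref{tab:ex-map}, the $\xi$-coefficients of the extracted Green's function are built from $v(\xi)$, values $f^{(i)}(a)$, and signed characteristic functions, and $T_x$ only modifies the $x$-factors; provided the boundary problem does not introduce a degenerate Stieltjes integrand, nondegeneracy is preserved and Proposition~\ref{prop:only-delta} becomes applicable.
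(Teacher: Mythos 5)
Your proposal follows the paper's proof essentially verbatim: take $g(x,\xi) := G_{x\xi}$, apply $T_x$ (resp.\@ $\beta_x$) to the extraction identity of Theorem~\ref{thm:extr-corr}, slide it past $\vcum_{\!\alpha}^\beta$, and invoke Proposition~\ref{prop:only-delta} (resp.\@ its homogeneous analogue) to conclude $T_x\, g(x,\xi) = \delta(x-\xi)$ and $\beta_x\, g(x,\xi) = 0$. The one point you leave open should not be left as a proviso, and in particular no extra hypothesis on the boundary problem is needed: by the very definition of degeneracy, any degenerate $\nglb(\xi) \in \galg_\xi$ occurring in $g(x,\xi)$ contributes nothing to $\vcum_{\!\alpha}^\beta\, g(x,\xi)\, f(\xi)$, so you may simply discard all such factors at the outset without changing the induced Green's operator; after this normalization, nondegeneracy is automatically inherited by $T_x\, g(x,\xi)$ and $\beta_x\, g(x,\xi)$, because $T_x$ and $\beta_x$ act only on the $x$-factors and leave the $\galg_\xi$-parts untouched. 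This is exactly how the paper dispatches the issue. Also, your ``principal obstacle'' (commuting $T_x$ with $\vcum_{\!\alpha}^\beta$) requires no termwise unwinding beyond the observation, built into the duplex structure of Theorem~\ref{thm:biv-distmod}, that $\vder_x$ and multiplication by elements of $\galg_x$ commute with $\vcum^\xi$ and with the evaluations $\mev^\xi_\alpha, \mev^\xi_\beta$.
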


\begin{proof}
  With~$G$ the Green's operator of the boundary problem~$(T, \bspc)$, set~$g(x,\xi) := G_{x\xi}$.
  For existence it suffices to show that~$g(x,\xi)$ satisfies the distributional boundary
  problem~\eqref{eq:dist-bvp}, and we may furthermore assume that~$g(x,\xi)$ is nondegenerate (we
  may discard any degenerate functions occurring in it since the induced action still represents the
  same~$G$). Since~$G$ is the Green's operator of~$(T, \bspc)$, the function~$u := Gf \in \galg$
  satisfies
  \begin{equation}
    \label{eq:green-func-calc}
    f(x) = T_x \, u(x) = T_x \, \vcum_{\!\alpha}^\beta \, g(x, \xi) \, f(\xi)
    = \vcum_{\!\alpha}^\beta \, \Big( T_x \, g(x, \xi) \Big) \, f(\xi) \qquad \text{on $[\alpha, \beta]$},
  \end{equation}
  where the second step follows from Theorem~\ref{thm:extr-corr} and the last step from the fact
  that~$\vder_x$ and all~$g(x) \in \galg_x$ commute with~$\smash{\vcum^\xi}$ and the
  evaluations~$\mev^\xi_\alpha, \mev^\xi_\beta$. Note that~$T_x \, g(x, \xi)$ is still nondegenerate
  since~$T_x$ does not affect the functions of~$\galg_\xi$. Hence we may apply
  Proposition~\ref{prop:only-delta} to~\eqref{eq:green-func-calc} to
  obtain~$T_x \, g(x,\xi) = \delta(x-\xi)$, which is the first line of~\eqref{eq:dist-bvp}. For
  verifying the second line, take any~$\beta \in \bspc$.  Again we have~$\beta(u) = 0$ since~$G$ is
  the Green's operator of~$(T, \bspc)$. But
  then~$0 = \beta_x \, \smash{\vcum_{\!\alpha}^\beta} \, g(x,\xi) \, f(\xi) =
  \smash{\vcum_{\!\alpha}^\beta} \, \big( \beta_x \, g(x,\xi) \big) \, f(\xi)$,
  which implies the second line of~\eqref{eq:dist-bvp} since the action of~$\beta_x$ again
  preserves the nondegeneracy of~$g(x,\xi)$.
\end{proof}

Note that the Green's function~$g(x,\xi)$ of Theorems~\ref{thm:extr-corr} and~\ref{thm:dist-bvp} is
\emph{unique on}~$[\alpha,\beta]$, meaning unique after discarding all degenerate
functions~$\nglb(\xi) \in \galg_\xi$. This is clear since if~$\tilde{g}(x,\xi)$ is another such
Green's function then~$k(x,\xi) := g(x,\xi)-\tilde{g}(x,\xi)$ would also be nondegenerate but since
they induce the same Green's operator we have~$\vcum_{\!\alpha}^\beta \, k(x, \xi) \, f(\xi) = 0$
for all~$f \in \galg$, and this implies~$k(x,\xi) = 0$ so~$g(x,\xi) = \tilde{g}(x,\xi)$.

Finally, let us now turn to the last goal~(3) outlined at the opening of this section. It is
relatively easy to achieve using the tools we have now at hand. If we have computed a Green's
operator in the usual setting of the ordinary shifted integro-differential
algebra~$(\galg, \der, \cum)$, we may immediately apply it to a \emph{piecewise forcing function} by
restricting the action defined above to~$G_x\colon \pcwext_x{\galg} \to \pcwext_x{\galg}$. In this
case, however, we should restrict ourselves to well-posed boundary problems so
that~$g(x,\xi) \in \pcwext_2{\galg}$ by Proposition~\ref{prop:gf-well-posed}. Otherwise the Green's
function~$g(x,\xi)$ would contain Diracs whose multiplication with the Heavisides
of~$f(\xi) \in \pcwext_\xi{\galg}$ in Theorem~\ref{thm:extr-corr} is undefined.\footnote{Also the
  operator interpretation is at best dubious in this case: The Diracs~$\delta^{(k)}(\xi-\alpha)$
  engender evaluations~$\ev_\alpha \, \der^k$ whose action on Heavisides is questionable. This
  reflects the problematic nature of a boundary problem constraining derivative ``values'' for
  solutions that will be distributional. Only the borderline case of piecewise solutions---obtaining
  when the order of the boundary conditions reaches but does not exceed the order of the
  differential equation---may still make sense when interpreted with caution.}

With these reservations in mind, we can now make a simple but precise statement about piecewise
forcing functions. The basic message is that we can use essentially the same method as for the usual
forcing functions taken from the ground algebra~$\galg$. In particular, \emph{existence and
  uniqueness} go through unscathed.

\begin{proposition}
  \label{prop:pcw-forcing}
  Let~$(\galg, \der, \cum)$ be a strongly ordinary shifted integro-differential algebra and
  let~$(T, \bspc)$ be a well-posed Stieltjes boundary problem. Then~\eqref{eq:bvp} admits exactly
  one solution $u \in \pcwext{\galg}$ for any given forcing function~$f \in \pcwext{\galg}$.
  If~$G \in \intdiffop$ is the corresponding Green's operator with Green's function~$g(x,\xi)$, we
  can compute the solution either via~$u = Gf$ or
  via~$u(x) = \smash{\vcum_{\!\alpha}^\beta} \, g(x, \xi) \, f(\xi)$.
\end{proposition}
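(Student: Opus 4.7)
The plan is to reduce the proposition to the operator-ring identities defining the Green's operator together with the extension of the action of~$\intdiffop$ from~$\galg$ to~$\pcwext{\galg}$.

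First I would observe that every generator of~$\intdiffop$---multiplication by elements of~$\galg$, the derivation~$\der$, the Rota-Baxter operator~$\cum$, and the characters~$\ev_c$---has been lifted to an operator on~$\pcwext{\galg}$ in Section~\ref{sec:pcwext}, in particular via Propositions~\ref{prop:pcw} and~\ref{prop:pcw-drb}. Consequently any~$G \in \intdiffop$ induces a $K$-linear endomorphism of~$\pcwext{\galg}$, and~$u := Gf$ is a well-defined element of~$\pcwext{\galg}$ for every~$f \in \pcwext{\galg}$. The operator identities $TG = 1$ and $\beta G = 0$ (for each~$\beta \in \bspc$) hold in~$\intdiffop$: the former by the characterization of the Green's operator, and the latter because $\im(G) = \bspc^\perp$ forces $\beta G$ to annihilate~$\galg$ and hence to be zero in~$\intdiffop$ by faithfulness of its action on~$\galg$. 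These identities survive the extension to~$\pcwext{\galg}$ and, applied to~$f$, yield $Tu = f$ and $\beta(u) = 0$ simultaneously.

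Next I would extend the integral representation. Theorem~\ref{thm:extr-corr} settles the case $f \in \galg$, and Proposition~\ref{prop:gf-well-posed} places $g(x,\xi)$ inside the bivariate piecewise extension~$\pcwext_2{\galg}$ of Proposition~\ref{prop:pcw-biv}, so the product $g(x,\xi) \cdot f(\xi)$ remains entirely in $\pcwext_2{\galg}$ with no Heaviside-times-Dirac collisions. By $K$-linearity of both sides of the asserted formula, I can reduce to generators $f = v H_c$ with~$v \in \galg$ and~$c \in K$. Since $H(\xi-c)$ is a $\vder_x$-constant, it commutes with all $x$-dependent building blocks of the extracted Green's operator, so a direct unwinding using the defining equations~\eqref{eq:pcw-def1}--\eqref{eq:pcw-def2} for~$\cum$ on~$\pcwext{\galg}$ reduces the asserted formula on~$v H_c$ to the case~$f = v \in \galg$ already established.

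The hardest step will be uniqueness, because Proposition~\ref{prop:pcw-drb} enlarges $\ker(\der)$ on~$\pcwext{\galg}$ to $K[H_a \mid a \in K]$ and so a priori permits many more homogeneous solutions than exist in~$\galg$. My plan is to decompose a hypothetical difference $w := u_1 - u_2$ of two solutions in canonical form $w = w_0 + \sum_a w_a H_a$ with $w_0, w_a \in \galg$. Since $\der H_a = 0$, the operator~$T$ respects the Heaviside grading via $T(v H_a) = (Tv) \, H_a$, forcing each coefficient into the finite-dimensional space~$\ker T$ by strong ordinariness. Then I would exploit well-posedness, which by the argument behind Proposition~\ref{prop:gf-well-posed} constrains each~$\beta \in \bspc$ to involve only local terms of order strictly below~$\ord(T)$, together with the explicit formula $\ev_c(H_a) = \bar{H}(a-c)$, to show that the Stieltjes conditions separate the graded components and force each~$w_a$ to vanish. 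The combinatorial interplay between these shifted evaluations and the Heaviside basepoints is the principal delicacy and will constitute the core of the uniqueness verification.
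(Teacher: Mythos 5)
Your existence step has a transport gap. You assert that every $G \in \intdiffop$ induces an endomorphism of~$\pcwext{\galg}$ and that the ring identities $TG = 1$ and $\beta G = 0$ ``survive the extension''. But~$\pcwext{\galg}$ is only a differential Rota-Baxter algebra, \emph{not} an integro-differential one (Proposition~\ref{prop:pcw-drb}), so the reduction rule of~$\intdiffop$ corresponding to the strong Rota-Baxter axiom (the $\cum f \der$ rule) fails on~$\pcwext{\galg}$; hence~$\pcwext{\galg}$ is not an $\intdiffop$-module and identities valid in the quotient ring cannot simply be read off as operator identities on~$\pcwext{\galg}$. The paper closes exactly this gap with the crucial fact (from~\cite[Lem.~2]{RosenkranzSerwa2015}) that~$G \in \galg[\cum_\Phi]$, i.e.\@ the Green's operator of a well-posed problem contains no derivations, so only reduction rules that do hold on~$\pcwext{\galg}$ are needed to verify $T(Gf) = f$ and $\beta(Gf) = 0$. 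Similarly, your reduction of the integral formula to $f \in \galg$ by ``commuting'' $H(\xi - c)$ past the $x$-dependent parts does not work: the Heaviside factor genuinely alters the $\xi$-integration via~\eqref{eq:pcw-def1}, and the paper instead redoes the relevant (second and fourth row) computations of Theorem~\ref{thm:extr-corr}.

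The uniqueness step is the fatal gap: your strategy interprets $T$ on~$\pcwext{\galg}$ through the derivation with $\der H_a = 0$ and hopes the Stieltjes conditions kill each graded component, but under that reading uniqueness is simply \emph{false}. For example, with $T = \der^2$ and $\bspc = [\ev_0, \ev_1]$, the element $w = H_a - H_b$ with $0 < a < b < 1$ satisfies $Tw = 0$ (since $\der H_a = \der H_b = 0$) and $\ev_0(w) = \bar{H}(a) - \bar{H}(b) = 0$ as well as $\ev_1(w) = \bar{H}(a-1) - \bar{H}(b-1) = 1 - 1 = 0$, so no combinatorial analysis of evaluations versus basepoints can force the coefficients to vanish. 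The paper avoids this by changing the derivation: it views $u \in \pcwext{\galg} \subset \distmod{\galg}$ and interprets the homogeneous equation with the distributional derivation~$\vder$ (so $\vder H_a = \delta_a$); then, using the splitting $\distmod{\galg} = \galg \oplus \distmod^*{\galg}$ and the fact that~$\vder$ respects the filtration of~$\distmod{\galg}$, the equation $Tu = 0$ forces $u \in \galg$, and regularity of~$(T, \bspc)$ over~$\galg$ gives $u = 0$. This passage to~$\distmod{\galg}$ with the distributional derivative is the essential idea missing from your proposal.
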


\begin{proof}
  From~\cite[Lem.~2]{RosenkranzSerwa2015} we know that~$G \in \galg[\cum_\Phi]$, so the action
  of~$G$ involves only integral operators and multiplication by elements of~$\galg$. But this means
  that the correctness proof for Green's operators~\cite[Thm.~26]{RosenkranzRegensburger2008a} is
  applicable, and all required reduction rules for the operator ring~$\intdiffop$ are valid
  on~$\pcwext{\galg}$. In fact, if we accept the derivation of Proposition~\ref{prop:pcw-drb}, the
  entire action on~$\pcwext{\galg}$ would be well-defined except for the~$\cum f \der$ rule
  of~\cite[Table~1]{RosenkranzRegensburger2008a}, which breaks down because the strong Rota-Baxter
  axiom does not hold in~$\pcwext{\galg}$. While even this could repaired by constructing the
  differential Rota-Baxter operator ring~\cite[\S4]{GaoGuoRosenkranz2015a} instead of the usual
  integro-differential operator ring, we do not need this here since no differential operators are
  involved in computing~$u = Gf \in \pcwext{\galg}$.

  This settles the question of existence. For proving uniqueness, it is sufficient to show that the
  homogeneous problem~\eqref{eq:bvp} with~$f=0$ has only the trivial
  solution~$u \in \distmod{\galg}$. But we know
  that~$\distmod{\galg} = \galg \oplus \distmod^*{\galg}$ as differential $K$-vector spaces (i.e.\@
  differential modules over the ground field~$K$), temporarily
  setting~$\distmod^*{\galg} := \big( \distmod{\galg} \setminus \galg \big) \cup \{ 0 \}$. Moreover,
  the derivation $\vder\colon \distmod^*{\galg} \to \distmod^*{\galg}$ respects the filtration
  outlined earlier (at the end of Section~\ref{sec:biv-distributions}). Therefore~$Tu = 0$
  implies~$u \in \galg$, and this in turn implies~$u=0$ since~$(T,\bspc)$ was assumed to be a
  regular boundary problem over~$\galg$.

  Finally, note that Theorem~\ref{thm:extr-corr} is still valid when restricted to Green's operators
  of well-posed boundary problems (hence the unnecessary---and now invalid---cases for the first and
  third row in Table~\ref{tab:ex-map} can be removed). This can be seen by a straightforward
  generalization of the computations in the proof of Theorem~\ref{thm:extr-corr} (for the second and
  fourth case).
\end{proof}

Our treatment of forcing functions in~$\pcwext{\galg}$ includes the classical case of piecewise
smooth functions by using the standard example~$\galg = C^\infty(\RR)$. With the reservations made
above (cf.\@ Footnote~\ref{fn:crazy-func}), this includes in particular the \emph{calculus for
  functions with jumps} outlined in Example~11 of~\cite[\S2.1]{StakgoldHolst2011}.

\section{Conclusion}

Our algebraic treatment of \emph{piecewise functions and distributions} is no more than a starting
point. Future work might also consider the two constructions in separate developments. Indeed, we
have pointed out in Remark~\ref{rem:no-diffring} that the multiplicative structure exported from the
piecewise extension~$\pcwext{\galg}$ is independent of the other structures on the distribution
module $\distmod{\galg}$; we might impose any product whatsoever. While this might be construed as a
weakness of the algebraic approach, it clarifies at least the complementary character of the
Diracs~$\delta_a$ and the Heavisides~$H_a$: While the multiplication of the latter reflects an order
structure in the ground field, the former encode point evaluations without any relation to the
order. The only link between the two structures is the defining relation~$H_a' = \delta_a$.

A more ambitious treatment would also allow \emph{piecewise continuous coefficients} of the
differential operator~$T$; this is what is typically encountered in interface
problems~\cite[\S1.4]{StakgoldHolst2011}. However, it would be difficult to accommodate such a case
directly into our present approach since generalizing~$\galg$ to be a differential Rota-Baxter
(rather than an integro-differential) algebra entails the loss of the strong Rota-Baxter
axiom~\eqref{eq:str-rb-axiom}. In that case, Green's operators/functions cannot be computed as usual
(at least it needs a different justification).

We have constructed bivariate distributions only in so far as needed for describing Green's
functions (cf.\@ Remark~\ref{fn:biv-dist}). It would be very interesting, and highly important for
practical applications in LPDE problems, to generalize the present algebraic approach to the (truly)
\emph{multivariate distributions}. In particular, the LPDE analog of the distributional differential
equation in~\eqref{eq:dist-bvp}, without the boundary conditions, is a crucial tool for the analytic
treatment of LPDE, known as the \emph{fundamental solution}~$\Psi$. For example in the Laplace
equation with~$T = - \Delta = - \der_x^2 -\der_y^2$ one
finds~$\Psi(x,y; \xi,\eta) = -\log \sqrt{(x-\xi^2 + (y-\eta)^2} /2\pi$.

On another note, one may also contemplate \emph{substitution} of functions in distributions from an
algebraic viewpoint (in the multivariate case this would subsume cases such as the diagonal
distribution introduced in Section~\ref{sec:biv-distributions}). Analysis tells us the key relation
$\delta(f(x)) = \delta(x-z)/|f'(z)|$ if~$f$ is suitably regular and has one simple root~$z \in \RR$
within the domain of consideration. However, it is not clear at this point in how far such a
relation can be mapped to an algebraic setting unless one has a suitable algebraic treatment of
composing functions with each other. Not much seems to be available in terms of general settings (as
far as we are aware), apart from some promising new developments
like~\cite[\S3.3]{Robertz2014}. Future work might bring up some interesting new connections.

\begin{acknowledgment}
  This work was supported by the Austrian Science Fund under the FWF Grant No.\@ P30052. We thank
  the anonymous referees for their valuable hints.
\end{acknowledgment}


\end{document}